\def\@settitle{%
  \vspace*{-20pt}
  \begin{flushleft}%
    \baselineskip14\p@\relax
    \normalfont\bfseries\LARGE
    \@title
  \end{flushleft}%
}
\def\@setauthors{%
  \begingroup
  \def\thanks{\protect\thanks@warning}%
  \trivlist
  \large \@topsep30\p@\relax
  \advance\@topsep by -\baselineskip
  \item\relax
  \author@andify\authors
  \def\\{\protect\linebreak}%
  \authors
  \ifx\@empty\contribs
  \else
    ,\penalty-3 \space \@setcontribs
    \@closetoccontribs
  \fi
  \normalfont
  \@setaddresses
  \endtrivlist
  \endgroup
}
\def\@setaddresses{\par
  \nobreak \begingroup
  \small
  \raggedright
  \def\author##1{\nobreak\addvspace\smallskipamount}%
  \def\\{\unskip, \ignorespaces}%
  \interlinepenalty\@M
  \def\address##1##2{\begingroup
    \par\addvspace\bigskipamount\noindent
    \@ifnotempty{##1}{(\ignorespaces##1\unskip) }%
    {\ignorespaces##2}\par\endgroup}%
  \def\curraddr##1##2{\begingroup
    \@ifnotempty{##2}{\nobreak\noindent\curraddrname
      \@ifnotempty{##1}{, \ignorespaces##1\unskip}\/:\space
      ##2\par}\endgroup}%
  \def\email##1##2{\begingroup
    \@ifnotempty{##2}{\nobreak\noindent E-mail address%
      \@ifnotempty{##1}{, \ignorespaces##1\unskip}\/:\space
      \ttfamily##2\par}\endgroup}%
  \def\urladdr##1##2{\begingroup
    \def~{\char`\~}%
    \@ifnotempty{##2}{\nobreak\noindent\urladdrname
      \@ifnotempty{##1}{, \ignorespaces##1\unskip}\/:\space
      \ttfamily##2\par}\endgroup}%
  \addresses
  \endgroup
  \global\let\addresses=\@empty
}
\def\@setabstracta{%
    \ifvoid\abstractbox
  \else
    \skip@25\p@ \advance\skip@-\lastskip
    \advance\skip@-\baselineskip \vskip\skip@
    \box\abstractbox
    \prevdepth\z@ 
    \vskip-10pt
  \fi
}
\renewenvironment{abstract}{%
  \ifx\maketitle\relax
    \ClassWarning{\@classname}{Abstract should precede
      \protect\maketitle\space in AMS document classes; reported}%
  \fi
  \global\setbox\abstractbox=\vtop \bgroup
    \normalfont\small
    \list{}{\labelwidth\z@
      \leftmargin0pc \rightmargin\leftmargin
      \listparindent\normalparindent \itemindent\z@
      \parsep\z@ \@plus\p@
      
    }%
    \item[\hskip\labelsep\bfseries\abstractname.]%
}{%
  \endlist\egroup
  \ifx\@setabstract\relax \@setabstracta \fi
}
\def\section{\@startsection{section}{1}%
  \z@{-1.2\linespacing\@plus-.5\linespacing}{.8\linespacing}%
  {\normalfont\bfseries\Large}}
\def\subsection{\@startsection{subsection}{2}%
  \z@{-.8\linespacing\@plus-.3\linespacing}{.3\linespacing\@plus.2\linespacing}%
  {\normalfont\bfseries}}
\def\subsubsection{\@startsection{subsection}{3}%
  \z@{.7\linespacing\@plus.2\linespacing}{-1.5ex}%
  {\normalfont\itshape}}
\def\@secnumfont{\bfseries}
\def\to{\mathchoice{\longrightarrow}{\rightarrow}{\rightarrow}{\rightarrow}}
\newcommand{\shortxra}[2][]{\ext@arrow 0359\rightarrowfill@{#1}{#2}}
\def\longrightarrowfill@{\arrowfill@\relbar\relbar\longrightarrow}
\newcommand{\longxra}[2][]{\ext@arrow 0359\longrightarrowfill@{#1}{#2}}
\renewcommand{\xrightarrow}[2][]{\mathchoice{\longxra[#1]{#2}}%
  {\shortxra[#1]{#2}}{\shortxra[#1]{#2}}{\shortxra[#1]{#2}}}
\def\otimesover#1{\mathbin{\mathop{\otimes}_{#1}}}
\def\Nopagebreak{\@nobreaktrue\nopagebreak}
\theoremstyle{plain}
\newtheorem{theorem}{Theorem}[section]
\newtheorem{proposition}[theorem]{Proposition}
\newtheorem{corollary}[theorem]{Corollary}
\newtheorem{lemma}[theorem]{Lemma}
\theoremstyle{definition}
\newtheorem{definition}[theorem]{Definition}
\newtheorem{example}[theorem]{Example}
\newtheorem{remark}[theorem]{Remark}
\def\Z{\mathbb{Z}}
\def\Q{\mathbb{Q}}
\def\C{\mathbb{C}}
\def\cP{\mathcal{P}}
\def\Hyp{M}
\def\Ker{\operatorname{Ker}}
\def\Im{\operatorname{Im}}
\def\Hom{\operatorname{Hom}}
\def\sign{\operatorname{sign}}
\def\rhot{\rho^{(2)}}
\def\bt{b^{(2)}}
\begin{document}

\title%
{Hidden torsion, 3-manifolds, and homology cobordism}

\author{Jae Choon Cha}

\address{Department of Mathematics\\
  POSTECH\\
  Pohang 790--784\\
  Republic of Korea\\
  and\linebreak
  School of Mathematics\\
  Korea Institute for Advanced Study \\
  Seoul 130--722\\
  Republic of Korea
}

\email{jccha@postech.ac.kr}

\author{Kent E. Orr}

\address{Department of Mathematics\\
  Indiana University, Bloomington\\
  Indiana 47405\\
  USA
}

\email{korr@indiana.edu}

\def\subjclassname{\textup{2010} Mathematics Subject Classification}
\expandafter\let\csname subjclassname@1991\endcsname=\subjclassname
\expandafter\let\csname subjclassname@2000\endcsname=\subjclassname
\subjclass{%
  57M27, 
  57N70
}

\keywords{Homology cobordism, 3-manifold, Local hidden torsion,
  $L^2$-signature}

\begin{abstract}
  This paper continues our exploration of homology cobordism of
  3-manifolds using our recent results on Cheeger-Gromov
  $\rho$-invariants associated to amenable representations.  We
  introduce a new type of torsion in 3-manifold groups we call hidden
  torsion, and an algebraic approximation we call local hidden
  torsion.  We construct infinitely many hyperbolic 3-manifolds which
  have local hidden torsion in the transfinite lower central subgroup.
  By realizing Cheeger-Gromov invariants over amenable groups, we show
  that our hyperbolic 3-manifolds are not pairwise homology cobordant,
  yet remain indistinguishable by any prior known homology cobordism
  invariants.  Additionally we give an answer to a question about
  transfinite lower central series of homology cobordant 3-manifold
  groups, asked by T. D. Cochran and M. H. Freedman.
\end{abstract}

\maketitle

\section{Introduction}

 
We investigate low dimensional manifolds via homology cobordism.  Recall that two closed 3-manifolds $M$ and $M'$ are (topologically) \emph{homology cobordant} if there is a 4-dimensional
topological cobordism $W$ between $M$ and $M'$ satisfying
$H_*(W,M)=0=H_*(W,M')$.  One can consider homology with twisted
coefficients as well.  We apply our recent results concerning the homology cobordism invariance of
Cheeger-Gromov $\rho$-invariants~\cite{Cha-Orr:2009-01} and utilizing
a new form of torsion in manifolds groups we call {\em local hidden torsion.}

Numerous central problems in low dimensional topology are special
cases of the problem of classifying homology cobordism types.
Concordant knots in a three manifold, $M,$ have homology cobordant
exteriors, with coefficients in the group ring
$\Z\pi_1M$~\cite{Cappell-Shaneson:1974-1}.  Hence, to classify knots
up to concordance we can understand homology cobordism of three
manifolds.  Casson and Freedman showed the knot slice problem contains
``universal'' four dimensional topological surgery problems whose
positive solution would yield a classification theory for topological
four manifolds~\cite{Casson-Freedman:1984-1,Freedman-Quinn:1990-1}.
Levine embedded the mapping class group, that is the group of isotopy
classes of surface diffeomorphisms, in the monoid of homology
cobordism classes of the mapping cylinders associated to surface
diffeomorphisms (see \cite{Levine:2001-1} for one boundary component
case, \cite{Cha-Friedl-Kim:2009-01} for the generalization to the
multi-boundary component case).  Hence, computing homology cobordism
classes of three manifolds potentially detects surface homeomorphisms
and the mapping class group.  (See
also~\cite{Goda-Sakasai:2008-1,Goda-Sakasai:2009-1,Cha-Friedl-Kim:2009-01}.)
And new results distinguishing smooth and topological knot concordance
give a laboratory for experiments in four dimensional smoothing
theory.

\subsection{Hidden torsion and main results}

Given a $4$-dimensional homology cobordism $W$ of $M$, the fundamental
groups of these manifolds relate in subtle and poorly understood ways.
In this paper, we exploit a new type of torsion we call {\em hidden
  torsion} in a manifold, and use that torsion, along with recent
results from~\cite{Cha-Orr:2009-01}, to compute Cheeger-Gromov
$\rho$-invariants as obstructions to homology cobordism.  Roughly
speaking, we say that a curve in a manifold $M$ represents
\emph{hidden torsion} if the curve has infinite order in $\pi_1(M)$
and is essential in any homology cobordism of $M$, but some power is
null-homotopic in some homology cobordism of~$M$
(Definition~\ref{definition:hidden-torsion}).  Note that a ``generic''
3-manifold has torsion-free fundamental group (e.g., all closed
irreducible non-spherical 3-manifolds, by the Geometrization
Conjecture).

We give an algebraic version of this notion using Vogel's homology
localization of groups (equivalently, Levine's algebraic closure of
groups) which we call {\em local hidden torsion.}  We discuss the Vogel
localization of groups in Section~\ref{subsection:localization}.

\theoremstyle{definition}
\newtheorem*{tempdef}{Definition~\ref{definition:local-hidden-torsion}}
\begin{tempdef}
  Let $G$ be a group, and $\widehat G$ the Vogel group localization
  of~$G$.  An element $g \in G$ is {\em local hidden torsion of $G$}
  if $g$ has infinite order in $G$, and nontrivial finite order in
  its image under $G \to \widehat G$.
\end{tempdef}
\noindent We remark that {\em local hidden torsion} and {\em hidden torsion}, as
defined above, agree for high dimensional manifolds. (See
Theorem~\ref{theorem:local-vs-geometric-hidden-torsion-high-dim}).

We exploit local hidden torsion in 3-manifold groups to realize
Cheeger-Gromov invariants, which first appeared as homology cobordism obstructions in~\cite{Cha-Orr:2009-01}.  We prove the following
theorem, and use this theorem to construct the examples in our main
result, Theorem~\ref{theorem:intro-main}.

\begin{theorem}
  \label{theorem:intro-hidden-local-torsion}
  There are closed hyperbolic 3-manifolds $M$ whose fundamental group
  has local hidden torsion lying in~$\pi_1(M)_\omega$.
\end{theorem}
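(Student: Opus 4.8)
The plan is to realize the algebraic phenomenon of local hidden torsion by an explicit satellite (infection) construction performed on a hyperbolic link in $S^3$, and then to verify the three requirements for the resulting $M$ and a distinguished curve $g$ — that $g$ has infinite order in $\pi_1(M)$, that $g$ lies in $\pi_1(M)_\omega$, and that $g$ has nontrivial finite order in $\widehat{\pi_1(M)}$ — by three essentially independent arguments, the last of which is the crux. Concretely, I would fix a seed link $L=L_0\cup\eta_0\cup\eta_1\cup\cdots\subset S^3$, chosen to be hyperbolic, where the $\eta_i$ are null-homologous circles bounding disjoint surfaces in the complement of the remaining components; infecting along the $\eta_i$ by carefully chosen companion knots $J_i$ and Dehn filling $L_0$ produces the candidate closed $3$-manifold $M$, and $g$ is taken to be the class of the core of the top-level infection torus. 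Hyperbolicity of $M$ is then obtained from Thurston's hyperbolic Dehn surgery theorem: the whole construction is a Dehn filling of a fixed hyperbolic link exterior, so all but finitely many admissible choices of filling slopes and companions leave $M$ hyperbolic, while the remaining properties hold for every admissible choice.

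For the lower central series statement, the point is that each $J_i$ is a \emph{knot}, so the inclusion of its exterior into a solid torus neighbourhood of $\eta_i$ is an $H_1$-isomorphism and $H_2$-epimorphism; by Stallings' theorem each infection therefore leaves all nilpotent quotients of the fundamental group unchanged. Arranging $L_0$ and its filling so that the un-infected manifold has lower central series quotients that stabilize after $H_1$, one concludes that $\pi_1(M)/\pi_1(M)_n$ is independent of $n$ in the relevant range, so that $g$, which is appropriately trivial in $H_1(M)$, lies in $\pi_1(M)_n$ for every $n$ and hence in $\pi_1(M)_\omega$. Infinite order of $g$ in $\pi_1(M)$ then follows from hyperbolicity (torsion-freeness of $\pi_1(M)$) together with $g\neq 1$ in $\pi_1(M)$, which is in turn implied by $g\neq 1$ in $\widehat{\pi_1(M)}$, established below.

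To see that a suitable power $g^k$ becomes trivial in the Vogel localization, I would build into the construction a piece — for instance a topologically slice companion (trivial Alexander polynomial, via Freedman) or a satellite pattern admitting a slice-like disk in a $\Z$-homology cobordism — that produces a topological $\Z$-homology cobordism $W$ from $M$ to a simpler $3$-manifold $M'$ in which $g^k$ is nullhomotopic. Since $M\hookrightarrow W$ is a $\Z$-homology equivalence, it is inverted by the localization, so it induces an isomorphism $\widehat{\pi_1(M)}\xrightarrow{\cong}\widehat{\pi_1(W)}$; as $g^k=1$ in $\pi_1(W)$, this gives $g^k=1$ in $\widehat{\pi_1(M)}$, with $k$ determined by the winding number of the pattern.

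The hard part, and the main obstacle, is to rule out $g=1$ in $\widehat{\pi_1(M)}$. Here no nilpotent quotient can detect $g$ (since $g\in\pi_1(M)_\omega$), nor any quotient in which $g$ has infinite order (since $g^k=1$ in $\widehat{\pi_1(M)}$); the only remaining tool is a quotient carrying torsion. Following the strategy of~\cite{Cha-Orr:2009-01}, I would use the deeper companions $J_i$, chosen so that the integrals of their Levine--Tristram signature functions are nonzero, to compute a Cheeger--Gromov $\rhot$-invariant over a suitable \emph{amenable} (e.g. solvable, or more generally lying in Strebel's class) quotient of $\pi_1(M)$ through which $g$ survives; if $g$ were trivial in $\widehat{\pi_1(M)}$ it would die in some $\Z$-homology cobordism of $M$, and combining the homology cobordism invariance of $\rhot$ over amenable coefficients from~\cite{Cha-Orr:2009-01} with the nonvanishing signature computation yields a contradiction. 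This is exactly where an iterated structure is forced: $g$ must be $k$-torsion because of the slice-like piece responsible for it, yet nontrivial because a lower-level companion with nonzero signature obstructs its vanishing — and these two demands pull against each other, so the real content of the argument lies in choosing the seed link $L$ and the companion knots $J_i$ so as to satisfy both simultaneously while keeping everything within range of the $\rhot$-machinery.
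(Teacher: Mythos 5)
Your proposal diverges from the paper's proof in its essential mechanism, and the step you yourself flag as the crux contains a genuine gap.

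The central problem is your argument for $g \neq 1$ in $\widehat{\pi_1(M)}$. You reason: ``if $g$ were trivial in $\widehat{\pi_1(M)}$ it would die in some $\Z$-homology cobordism of $M$,'' and then obstruct this with an amenable $\rhot$-invariant. But that implication is precisely what fails in dimension three. Triviality of $g$ in $\widehat{\pi_1(M)}$ only says that $g$ dies in some finitely presented group $G_i$ fitting into a $2$-connected direct system $\pi_1(M) \to G_1 \to G_2 \to \cdots$; for $n$-manifolds with $n>3$ one can realize any such $G_i$ as $\pi_1$ of a homology cobordism (Lemma~\ref{lemma:homology-cobordism-with-given-fundamental-group}), which is exactly why Theorem~\ref{theorem:local-vs-geometric-hidden-torsion-high-dim} is stated only for $n>3$. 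For a closed $3$-manifold there is no analogous realization theorem, so ``$g=1$ in the localization'' does not translate into ``$g$ dies in a homology cobordism,'' and the $\rhot$-contradiction you want never gets off the ground. There is also a circularity lurking here: to build a $\rhot$-invariant that detects $g$ you need an amenable $D(\Z/p)$-quotient that factors through $\widehat{\pi_1(M)}$ and does not kill $g$ — and exhibiting such a quotient essentially presupposes the structural information about $\widehat{\pi_1(M)}$ that you are trying to establish.

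The paper avoids all of this by doing the computation directly rather than by obstruction. It starts from the torus bundle group $\Gamma = (\Z^t)^2 \rtimes \Z$, constructs explicit finitely presented groups $\Gamma_{2k-1}$ and $2$-connected maps $\Gamma \to \Gamma_{2k-1}$, and verifies via spectral sequences (Theorem~\ref{theorem:computation-of-localization} and Proposition~\ref{proposition:hat-Gamma-is-local}) that $\widehat\Gamma = \varinjlim_k \Gamma_{2k-1}$. Lemma~\ref{lemma:omega-lower-central-series-gamma-hat} then shows $\widehat\Gamma_\omega = \Z_{(2)}/\Z$, which has torsion, by an explicit argument with the presentation, with no signature invariants involved. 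Nontriviality of the torsion element in the localization is thus known outright, not inferred from an obstruction. Once that algebra is in hand, Theorem~\ref{theorem:construction-of-3-manifold} and Corollary~\ref{corollary:construction-of-3-manifold} (surgery on a $2$-connected system of $R$-nullhomologous equations, followed by Ruberman's theorem rather than Thurston's Dehn filling) produce a hyperbolic $M$ homology cobordant to the torus bundle $Y$, with a surjection $\pi_1(M) \twoheadrightarrow \Gamma$ inducing $\widehat{\pi_1(M)} \cong \widehat\Gamma$ and realizing the torsion class $\tfrac{1}{p}+\Z$ as the image of some $[\alpha]\in\pi_1(M)$. Torsion-freeness of hyperbolic $3$-manifold groups gives infinite order of $[\alpha]$ in $\pi_1(M)$, and Stallings' theorem carries $[\alpha]$ into $\pi_1(M)_\omega$. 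The Cheeger-Gromov $\rhot$-invariants over amenable groups enter only later, in Theorem~\ref{theorem:intro-main}, to distinguish homology cobordism classes of the $\Hyp_{K_i}$ — not to establish local hidden torsion in the first place. You would need to replace your $\rhot$-based detection step with an explicit computation of the localization of whatever seed group your link construction produces, and that is where the real work of the theorem lies.
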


\noindent Here $\{\pi_\alpha\}$ denotes the lower central series of a
group $\pi$, and $\omega$ is the first infinite ordinal.  Recall that
$\{\pi_\alpha\}$ is defined inductively by $\pi_1=\pi$,
$\pi_{\alpha}=[\pi,\pi_{\alpha-1}]$ for a successor ordinal $\alpha$,
and $\pi_{\alpha}=\bigcap_{\beta<\alpha}\pi_\beta$ for a limit
ordinal~$\alpha$.

The $3$-manifolds
we construct have non-trivial transfinite lower central series, a
phenomenon first observed and studied in~\cite{Cochran-Orr:1998-1}.

We use this local hidden torsion and a special case of our main
theorem in~\cite{Cha-Orr:2009-01} (see
Theorem~\ref{theorem:homology-cobordism-invariant-over-gamma-hat}) to
construct the examples satisfying parts (1) and (2) of the following
theorem.  This torsion hides within the intersection of the
lower central series and underlies our proof of parts (3) and (4) below.
 
\begin{theorem}
  \label{theorem:intro-main}
  There are infinitely many closed hyperbolic 3-manifolds $M=M_0,
  M_1,\ldots$ with the following properties:
  \begin{enumerate}
    \item For each $i$, there is a homology equivalence $f_i\colon M_i
    \to M$.  That is, $f_i$ induces an isomorphism on $H_*(-;\Z)$.
   \item Whenever $i\ne j$, $M_i$ and $M_i$ are not homology
    cobordant.
\end{enumerate}
Furthermore, all prior known homology cobordism obstructions fail to distinguish these examples.  In particular,
\begin{enumerate}
     \item[(3)] For any homomorphism $\phi\colon \pi_1(M)\to G$ with $G$
    torsion-free, the $L^2$-signature defects (= von
    Neumann-Cheeger-Gromov invariants) 
    $$
    \rhot(M,\phi) \mbox{ and }
    \rhot(M_i,\phi\circ {f_i}_*)
    $$ 
    are equal for each~$i$.  In
    particular Harvey's $\rho_n$-invariants \cite{Harvey:2006-1} of
    the $M_i$ are the same.
  \item[(4)] Similarly, the following homology cobordism invariants
    for the $M_i$ are equal:
   \begin{enumerate}
   \item Multi-signatures (= Casson-Gordon invariants) for prime power
     order characters in \cite{Gilmer:1981-1, Gilmer-Livingston:1983-1,
       Ruberman:1984-1, Cappell-Ruberman:1988-1}
   \item Atiyah-Patodi-Singer $\rho$-invariants for representations
     factoring through $p$-groups in
     \cite{Levine:1994-1,Friedl:2003-1}
   \item Twisted torsion invariants for representations factoring
     through $p$-groups in~\cite{Cha-Friedl:2010-01}
    \item Hirzebruch-type Witt-class-valued invariants from iterated
      $p$-covers in~\cite{Cha:2007-1}
    \end{enumerate}
  \end{enumerate}
\end{theorem}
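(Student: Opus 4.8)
The plan is to produce the manifolds $M_i$ by a satellite (infection) construction performed on the hyperbolic manifold supplied by Theorem~\ref{theorem:intro-hidden-local-torsion}. First I would fix such an $M$ with local hidden torsion $g\in\pi_1(M)_\omega$ and an embedded curve $\gamma\subset M$ representing $g$; since $\pi_1(M)_\omega\subseteq[\pi_1(M),\pi_1(M)]$ the curve $\gamma$ is null-homologous, so infecting $M$ along $\gamma$ by a knot in $S^3$ yields a closed $3$-manifold with a degree one map to $M$ that is an isomorphism on $H_*(-;\Z)$, which will give~(1). To keep the examples hyperbolic I would not infect directly but realize the infection by $\pm 1/q$-surgery on a crossing-circle link near $\gamma$ (the standard presentation of a satellite operation by surgery on null-homologous unknots); choosing this auxiliary link hyperbolic, Thurston's hyperbolic Dehn surgery theorem makes the resulting $M_q$ hyperbolic for all large $q$, and since the surgery curves are null-homologous the maps $f_q\colon M_q\to M$ remain $H_*$-equivalences.

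Next I would fix an integer $d>1$ dividing the order of $g$ in the Vogel localization $\widehat{\pi_1(M)}$ and a prime $p\nmid d$, and choose the twisting pattern so that the infecting knots $K_q$ form a family of algebraically slice knots whose Levine--Tristram signatures at the $d$-th roots of unity realize infinitely many distinct values, as in the classical Casson--Gordon families; algebraic sliceness gives $\int_{S^1}\sigma_{K_q}(\omega)\,d\omega=0$. Because $g$ has finite order in $\widehat{\pi_1(M)}$, and because $H_*$-equivalences and homology cobordisms induce isomorphisms of Vogel localizations, I expect to be able to build a single amenable group $\widehat\Gamma$ lying in Strebel's class $D(\Z_{(p)})$ together with a compatible family of homomorphisms $\phi_q\colon\pi_1(M_q)\to\widehat\Gamma$, factoring through $\widehat{\pi_1(M_q)}\cong\widehat{\pi_1(M)}$, that send the meridian of $K_q$ to an element of order exactly $d$. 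The infection formula for Cheeger--Gromov invariants should then give $\rhot(M_q,\phi_q)=\rhot(M,\bar\phi)+s(K_q)$ with $\bar\phi$ independent of $q$ and $s(K_q)$ a fixed multiple of the sum of the Levine--Tristram signatures of $K_q$ at the $d$-th roots of unity; after passing to a subsequence the $s(K_q)$ are pairwise distinct, and Theorem~\ref{theorem:homology-cobordism-invariant-over-gamma-hat} then makes each $\rhot(M_q,\phi_q)$ a homology cobordism invariant, yielding~(2).

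For~(3) and~(4) the point I would exploit is that the meridian $\mu_q$ of $K_q$ maps to $g\in\pi_1(M)_\omega$ under the collapsing map, hence dies in every nilpotent quotient of $\pi_1(M_q)$. Each invariant in~(4) is computed from a representation to a nilpotent group (finite abelian for prime-power Casson--Gordon characters, finite $p$-groups for the Atiyah--Patodi--Singer and twisted-torsion invariants, the $p$-group deck groups of iterated $p$-covers for the Hirzebruch-type invariants), so its coefficient system is trivial on $E(K_q)$, and a Mayer--Vietoris comparison of $M_q=(M\setminus\nu\gamma)\cup E(K_q)$ with $M=(M\setminus\nu\gamma)\cup(\text{solid torus})$ shows the invariant of $M_q$ equals that of $M$. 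For~(3), for torsion-free $G$ and $\phi\colon\pi_1(M)\to G$ the element $\phi(g)$ is trivial or of infinite order, so the infection formula contributes $\int_{S^1}\sigma_{K_q}=0$ and $\rhot(M_q,\phi\circ {f_q}_*)=\rhot(M,\phi)$; Harvey's $\rho_n$-invariants are the special case of $L^2$-signature defects over the torsion-free rational derived quotients, which are carried isomorphically by the $H_1$-isomorphism $f_q$.

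The hard part will be the construction in the second paragraph: exhibiting one amenable group $\widehat\Gamma$ in Strebel's class $D(\Z_{(p)})$ together with representations $\phi_q$ that simultaneously (a) detect the hidden torsion, which forces $\widehat\Gamma$ to be non-nilpotent and to retain the finite-order image of $g$ in $\widehat{\pi_1(M)}$, and (b) extend compatibly over every homology cobordism among the $M_q$ so that Theorem~\ref{theorem:homology-cobordism-invariant-over-gamma-hat} applies --- with~(b) depending on functoriality of the Vogel localization under $H_*$-equivalences and homology cobordisms and on $g$ being genuine torsion there. Coordinating this with the constraint $p\nmid d$ needed for the Strebel-class hypothesis, checking the infection formula with these torsion coefficients, and arranging the auxiliary surgery link to be hyperbolic, are the places where real work is needed.
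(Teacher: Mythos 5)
Your outline matches the paper's broad strategy (infect $M$ along a curve representing local hidden torsion, distinguish the results via amenable Cheeger–Gromov invariants, and use membership in $\pi_1(M)_\omega$ to kill the $p$-group and torsion-free obstructions), but the ``hard part'' you flag is exactly where your sketch goes wrong, and the error is not cosmetic. You propose to fix $d>1$ dividing the order of $g$ in $\widehat{\pi_1(M)}$, fix a prime $p\nmid d$, send the meridian to an element of order $d$ in an amenable group in Strebel's class $D(\Z_{(p)})$, and read off Levine–Tristram signatures at $d$-th roots of unity. This numerology is backward and the $D(\Z_{(p)})$ claim cannot hold: for a cyclic group $\Z/d$ with $p\nmid d$, the ring $\Z_{(p)}[\Z/d]$ (and likewise $\Z/p[\Z/d]$) splits as a product because $t^d-1$ is separable mod $p$, so $\Z/d$ fails the defining condition of $D(\Z_{(p)})$. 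The Strebel condition for an amenable group with torsion precisely requires its finite subgroups to be $p$-groups and the coefficient ring to be $\Z/p$ for \emph{the same} prime $p$. In the paper, the meridian curve $\alpha$ is chosen so that its image in $\widehat\Gamma$ has order exactly $p$ (an odd prime), and the target group is $\widehat\Gamma/\cP^3\widehat\Gamma$ built from the mixed-coefficient commutator series $\cP=(\Z,\Z,\Z_{(p)})$; Lemma~\ref{lemma:computation-mixed-coeffcient-series} shows this quotient retains precisely the $p$-primary torsion of $\Z_{(2)}/\Z$ (namely $\Z[\tfrac1p]/\Z$), and Lemma~\ref{lemma:mixed-coefficient-amenable-in-D(Z/p)} (via Lemma~6.8 of \cite{Cha-Orr:2009-01}) shows it is amenable and in $D(\Z/p)$. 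So the torsion order and the coefficient prime must coincide, not be coprime. Your version also leaves unaddressed why the candidate group should contain \emph{only} $d$-torsion: $\widehat\Gamma$ contains $\Z_{(2)}/\Z$, which has torsion of every odd order, and without the $\cP^3$-quotient you cannot land in any single Strebel class. This is the missing idea, not merely a technicality to be checked.

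A secondary difference: you propose to secure hyperbolicity by realizing the infection via $\pm1/q$-surgery on a crossing-circle link and appealing to Thurston's hyperbolic Dehn surgery. That is plausible but fragile --- you would need to arrange the link complement to be hyperbolic, the twist family of infecting patterns to have the required signature behavior, and to separately justify that the Dehn surgered manifold is (homology cobordant to) the intended infection. The paper sidesteps all of this by first doing the infection $M(\alpha,K)$ with $K$ chosen to be connected sums of one fixed knot (hence easy signature control, via \cite[Proposition~4.12]{Cha:2010-01}), and then applying Ruberman's theorem~\ref{theorem:H-cob-to-hyperbolic-3mfd}, which supplies a hyperbolic representative in the same homology cobordism class together with the degree-one map needed to propagate the local hidden torsion. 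Since the invariants used are homology cobordism invariants, Ruberman's replacement is cost-free, whereas your Dehn surgery approach requires extra analysis that the paper avoids.
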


Details and proofs are given as
Propositions~\ref{proposition:common-properties-of-examples},
\ref{proposition:torsion-free-L2-sign-does-not-detect},
\ref{proposition:p-group-invariants-do-not-detect}, and
Theorem~\ref{theorem:non-homology-cobordant-examples}.

We now outline the argument.  We begin with a surface bundle over a
circle with fundamental group $G = (\Z^t)^2\rtimes \Z$.  Here the
quotient group $\Z$ acts by negation on each factor of the free
abelian subgroup.  We compute the group localization of this group,
and show that this group localization has non-trivial transfinite
lower central series, and that the $\omega$ term in this series
contains torsion.  We construct an homology cobordism from our surface
bundle to a new three manifold with the property that the image of the
fundamental group of the new three manifold in its group localization
contains some of the above torsion.  Like the group localization, this
new three manifold has non-trivial transfinite lower central series.
We alter this three manifold, preserving it's homology type by
replacing a neighborhood of a curve representing hidden torsion with a
new homology circle, the complement of a knot, chosen to alter an
appropriately chosen Cheeger-Gromov $\rho$-invariant of the three
manifold associated to this torsion group element.

The resulting homology cobordism classes of the $M_i$ are
indistinguishable via any prior known invariant, as stated in (3)
and~(4).  This follows, with some work, from observing that the
relevant local hidden torsion vanishes in the group nilpotent
completion, and threrefore cannot be detected in any nilpotent
quotient group, and in particular, in any $p$-group.  Additionally,
the effect of this construction along the local hidden torsion is
invisible via any $L^2$-signature associated to a representation to a torsion-free group.

By contrast, the Cheeger-Gromov invariants employed to detect these examples arise from representations to 
infinite, non-nilpotent amenable groups
with torsion, in which the local hidden torsion is not eliminated.

Additional applications, by the first author, of the main results
of~\cite{Cha-Orr:2009-01} exploiting torsion can be found
in~\cite{Cha:2010-01}.

  We obtain one additional result from our construction and answer a
  question posed by Cochran and Freedman (see Kirby's problem list
  \cite[Problem~3.78]{Kirby:problem-list-1995-edition}): \emph{Is the
    lower central series length of a 3-manifold group invariant under
    homology cobordism?}

  Recall that the lower central series length of a group $\pi$ is
  defined to be the smallest ordinal $\alpha$ satisfying
  $\pi_\alpha=\pi_{\alpha+1}$.  Lower central series quotients $\pi/\pi_q$, $q$ finite, are preserved
  under homology cobordism by Stallings's
  Theorem~\cite{Stallings:1965-1}.  Therefore, one must consider 3-manifolds with transfinite lower central series length.  In \cite[Proposition~9.1]{Cochran-Orr:1998-1}, Cochran and
  the second author considered an algebraic analogue formulated for
  finitely presented groups and gave a negative answer.  We give a
  3-manifold group counterexample.

  \begin{theorem}
    \label{theorem:answer-to-cochran-freedman}
    There are two homology cobordant closed 3-manifolds with fundamental groups of differing lengths, one having length $\omega$,  and the other length $>\omega$.
  \end{theorem}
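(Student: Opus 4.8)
The plan is to read off the two manifolds directly from the construction behind Theorems~\ref{theorem:intro-hidden-local-torsion} and~\ref{theorem:intro-main}. Let $N$ be the surface bundle over $S^1$ with which that construction begins, so that $\pi_1(N)=G=(\Z^t)^2\rtimes\Z$ with $\Z$ acting by negation on the free abelian fiber $A=(\Z^t)^2$, and let $N'$ be the homology-cobordant $3$-manifold built from $N$ in the course of proving Theorem~\ref{theorem:intro-hidden-local-torsion}, the one carrying local hidden torsion in $\pi_1(N')_\omega$ and having non-trivial transfinite lower central series. Since $N$ and $N'$ are homology cobordant by construction, it suffices to show that the lower central series length of $\pi_1(N)$ is exactly $\omega$ while the length of $\pi_1(N')$ is strictly larger than~$\omega$.

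For $N$ I would compute the lower central series of $G$ by hand. Let $z$ generate the quotient $\Z$. Since $z$ acts on $A$ by negation, $[z,g]=g^{-2}$ for $g\in A$, while $[g,g']=1$ for $g,g'\in A$; hence $G_2=2A$ and, inductively, $G_q=2^{q-1}A$ for every finite $q$. As $A$ is free abelian this gives $G_q/G_{q+1}\cong(\Z/2)^{2t}\ne 1$ for all finite $q$, so the length is at least $\omega$; and $G_\omega=\bigcap_q 2^{q-1}A=1$ (as $A$ is torsion-free), so $G_\omega=G_{\omega+1}=1$ and the length is exactly~$\omega$. (If one realizes the surface bundle with a higher-genus fiber, so that $\pi_1(N)$ only surjects onto $G$, the same two facts follow from compatibility of surjections with the lower central series together with residual nilpotence of the surface-by-$\Z$ group in play.)

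For $N'$, Stallings's theorem~\cite{Stallings:1965-1} identifies the finite lower central quotients of $\pi_1(N')$ with those of $\pi_1(N)$, so the length of $\pi_1(N')$ is already at least $\omega$; what remains is to see that the transfinite series does not stabilize at stage $\omega$, i.e.\ that $\pi_1(N')_\omega\ne\pi_1(N')_{\omega+1}$. This is precisely the transfinite behavior the construction of $N'$ is engineered to create, in the spirit of~\cite{Cochran-Orr:1998-1}: the local hidden torsion element $g\in\pi_1(N')_\omega$ has infinite order in $\pi_1(N')$ but maps to a nontrivial finite-order element $\bar g$ of $\widehat{\pi_1(N')}_\omega$, and the computation of the Vogel localization in the body shows that $\bar g$ does not lie in $\widehat{\pi_1(N')}_{\omega+1}$. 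Since the localization homomorphism carries $\pi_1(N')_{\omega+1}$ into $\widehat{\pi_1(N')}_{\omega+1}$ but carries $g\in\pi_1(N')_\omega$ to $\bar g$, we cannot have $\pi_1(N')_\omega=\pi_1(N')_{\omega+1}$; hence the length of $\pi_1(N')$ exceeds~$\omega$. Combined with the second paragraph and the homology cobordism from $N$ to $N'$, this proves the theorem.

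The main obstacle is the strict inequality $\pi_1(N')_\omega\ne\pi_1(N')_{\omega+1}$. Producing an element of the $\omega$-th term is routine; the content is bounding the $(\omega+1)$-st term from above, and that is exactly what the explicit description of the Vogel localization $\widehat G$ — and the torsion it carries in degree $\omega$ — is needed for. The remaining ingredients are the elementary commutator computation of the second paragraph, Stallings's theorem, and the already-established construction of $N'$ together with its homology cobordism to $N$.
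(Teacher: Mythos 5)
Your proposal is correct and takes essentially the same route as the paper: the paper proves this with $Y$ (the torus bundle, $\pi_1=\Gamma$) and the ``seed'' manifold $M$ of Section~\ref{subsection:construction-of-examples}, using Lemma~\ref{lemma:omega-lower-central-series-gamma-hat} (with $k=1$) to see $\Gamma$ has length $\omega$, and Lemma~\ref{lemma:hidden-local-torsion-in-seed-mfd} to place $[\alpha]$ in $\pi_1(M)_\omega\setminus\pi_1(M)_{\omega+1}$ via Stallings plus the fact that $\widehat\Gamma_{\omega+1}=1$. (A tiny typo: the finite lower central quotient of $\Gamma$ is $(\Z/2)^2$, not $(\Z/2)^{2t}$, and the fiber here is a torus so $\pi_1(N)=G$ exactly, making your higher-genus parenthetical unnecessary.)
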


\subsubsection*{Convention}

In this paper all manifolds are assumed to
be oriented and compact.

\subsubsection*{Acknowledgments}
The authors thank the anonymous referee for detailed comments.
The first author was supported by National Research
Foundation of Korea (NRF) grants funded by the Ministry of Education,
Science and Technology (No. 2010--0029638 and 2010--0011629).  The
second author was supported by NSF grant DMS--0707078, and Simons Foundation grant 209082.

\section{Homology localization and hidden torsion}
\label{section:homology-localization-and-hidden-torsion}

\subsection{Homology localization of groups}
\label{subsection:localization}

First introduced by Bousfield~\cite{Bousfield:1975-1}, Vogel modified
the construction of homology localization, and with keen insight,
revealed its fundamental role in the study of manifold
embeddings~\cite{Vogel:1978-1}.  For groups, Levine's theory of
algebraic closures provides a very influential algebraic approach to
the homology localization~\cite{Levine:1989-2,Levine:1989-1}.  (For
related works, see, e.g., \cite{LeDimet:1988-1, Vogel:1978-1,
  Cha:2004-1, Cochran-Harvey:2004-1, Sakasai:2006-1, Cha:2007-1,
  Cha:2007-2, Cha-Orr:2009-01, Cochran-Harvey:2008-1, Gutierrez:1979-1, Heck:2010-01}.)

The homology localization we use in this paper is a specific case of
Vogel's more complicated general theory in \cite{Vogel:1978-1}, and follows the combinatorial approach introduced in~\cite{Levine:1989-1,Levine:1989-2,Levine:1992-1}, suitably modified to remove the normal closure condition used therein.  Explicit definitions that precisely fit our
purpose first appear in \cite{Cha:2004-1} and \cite{Cha-Orr:2009-01}.
For the reader's convenience, we recall these:

\begin{definition}
  \label{definition:homology-localization}
  Let $R$ be a commutative ring with unity.
  \begin{enumerate}
  \item A group homomorphism $\alpha\colon \pi \to G$ is called
    \emph{$R$-homology 2-connected}
    if $f$ induces an
    isomorphism on $H_1(-;R)$ and a surjection on $H_2(-;R)$.  We
    denote by $\Omega^R$ the collection of $\alpha\colon \pi \to G$
    with $\pi$ and $G$ finitely presented and $\alpha$ $R$-homology
    2-connected. We simply say $\alpha$ is 2-connected when
    $R=\Z$.
  \item A group $K$ is \emph{$R$-local} if for any $\alpha\colon\pi\to
    G$ in $\Omega^R$ and for any homomorphism $f\colon \pi\to K$,
    there is a unique homomorphism $g\colon G\to K$ satisfying $g\circ
    \alpha=f$.
  \item The \emph{Vogel-Levine $R$-homology localization} of a group
    $G$ is a group $\widehat G$ endowed with a homomorphism $p_G\colon
    G\to \widehat G$ such that $\widehat G$ is $R$-local and $p_G$ is
    universal (initial) among morphisms with this property. That is,
    for any $f\colon G\to K$ with $K$ local, there is a unique
    $g\colon \widehat G \to K$ satisfying $g\circ p_G = f$.
  \end{enumerate}
\end{definition}

One knows that for any group $G$ there exists unique $(\widehat G,
p_G)$ satisfying the above.  (For a proof, e.g.,
see~\cite{Cha:2004-1}.)  Useful consequences of
Definition~\ref{definition:homology-localization} are the following
properties:
\begin{enumerate}
\item The association $G\mapsto \widehat G$ is a functor. 
\item $\{p_G\}$ is a natural transformation.
\item Any $\alpha\colon \pi\to G$ in $\Omega^R$
gives rise to an isomorphism $\widehat\alpha\colon \widehat\pi
\to\widehat G$.
\end{enumerate}
In this paper homology localization always refers to the group localization in
Definition~\ref{definition:homology-localization}.

A homology equivalence induces a two-connected homomorphism between
fundamental groups.  Hence, and central to applications to manifolds,
a homology cobordism $W$ between manifolds $M_i, i = 0,1$, determines
inclusion-induced isomorphisms
$\widehat{\pi_1(M_i)} \cong \widehat{\pi_1(W)}$.

\subsection{Hidden torsion}
Local hidden torsion is an often more accessible homological
approximation to hidden torsion, suitable for some applications,
including the primary application of this paper.  We prove in
Theorem~\ref{theorem:local-vs-geometric-hidden-torsion-high-dim} below
that hidden torsion and local hidden torsion agree in high dimensional
manifolds.  This intrinsically low dimensional distinction deserves
more extensive examination in a future paper.

We recall the definitions from the introduction:

\begin{definition}
 \label{definition:local-hidden-torsion}
 Let $G$ be a group, and $\widehat G$ the homology localization
 of~$G$.  An element $g \in G$ is {\em local hidden torsion of $G$}
 if $g$ has infinite order in $G$, and nontrivial finite order in its
 image under $G \to \widehat G$.
\end{definition}

\begin{definition}
  \label{definition:hidden-torsion}
  For a manifold $M$, an element $g\in \pi_1(M)$ is called
  \emph{hidden torsion of $M$} if $g$ has infinite order in $\pi_1(M)$
  and is essential in any homology cobordism $W$ of $M$, but for some
  homology cobordism $W$ of $M$, the image of $g$ in $\pi_1(W)$ has
  finite order.
\end{definition}

We remark that if $g\in \pi_1(M)$ is hidden torsion, then for any
$N$ homology cobordant to $M$, there is a homology cobordism $W$
between $M$ and $N$ satisfying the following: the image of $g$ in
$\pi_1(W)$ has finite order.  This follows from the obvious fact that
if $V$ is a homology cobordism with $\partial V=M\cup -N$ and $V'$ is
a homology cobordism with $\partial V'=M\cup -M'$ via which
$g\in\pi_1(M)$ is hidden torsion, then $W=V' \cup_{M'} -V' \cup_{M} V$
is a homology cobordism from $M$ to~$N$.

The following hidden torsion example in a hyperbolic 3-manifold may be
viewed as a geometric analogy to the fact that an element $g$ in a
group $G$ may have infinite order even when its homology class $[g]\in
H_1(G)$ has nontrivial finite order.

\begin{example}
  Suppose $K$ is a hyperbolic slice knot in $S^3$.  Let $M$ be the
  $(n/1)$-surgery along $K$ on~$S^3$, so that $H_1(M)=\Z/n$.  By
  Thurston's hyperbolic Dehn surgery theorem~\cite{Thurston:1978-1},
  $M$ is hyperbolic for all but finitely many $n$, and consequently
  $\pi_1(M)$ is torsion-free for those~$n$.  Since the meridian $\mu$
  of $K$ in $M$ is a generator of $H_1(M)$, it follows that the order
  of $\mu$ in $\pi_1(W)$ is at least $n$ for any homology cobordism
  $W$ of $M$.  In particular, $\mu$ is not null-homotopic in~$W$.
  
  Taking a concordance $C\cong S^1\times[0,1]$ in $S^3\times[0,1]$
  between $K$ and the unknot and by filling in the exterior of $C$
  with $D^1\times S^1\times[0,1]$ along the $(n/1)$-framing, we obtain
  a homology cobordism, say $W$, between $M$ and the lens space
  $L=L(n,1)$.  Since $\mu$ is isotopic to the generator of
  $\pi_1(L)=\Z/n$ in $W$, $\mu$ has order $n$ in~$\pi_1(W)$.  This
  shows that $\mu$ represents hidden torsion of~$M$.

  Since $H_2(\Z/n)=0$, the abelianization map $\pi_1(M)\to
  H_1(M)=\Z/n$ induces an isomorphism on homology localization.  Since
  the homology localization of any abelian group is the group itself,
  it follows that the localization of $\pi_1(M)$ is the abelianization
  map $\pi_1(M)\to \Z/n$.  Therefore $\mu$ represents local hidden
  torsion of~$\pi_1(M)$.
\end{example}


\begin{theorem}
  \label{theorem:local-vs-geometric-hidden-torsion-high-dim}
  Suppose $M$ is a closed $n$-manifold with $n>3$.  Then an element
  $g\in \pi_1(M)$ is hidden torsion of $M$ if and only if $g$ is local
  hidden torsion of~$\pi_1(M)$.
\end{theorem}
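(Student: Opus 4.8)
The plan is to prove the two implications separately, with the easy direction first. Suppose $g \in \pi_1(M)$ is hidden torsion. Then $g$ has infinite order in $\pi_1(M)$, and there is a homology cobordism $W$ of $M$ in which the image of $g$ has finite order, say order $k>1$ (it cannot be trivial, since $g$ is essential in every homology cobordism). Since $W$ is a homology cobordism, the inclusion $M\hookrightarrow W$ is a homology equivalence and hence $2$-connected, so the discussion following Definition~\ref{definition:homology-localization} gives an isomorphism $\widehat{\pi_1(M)}\xrightarrow{\cong}\widehat{\pi_1(W)}$ compatible with the localization maps. Thus the image of $g$ in $\widehat{\pi_1(M)}$ is identified with the image of the order-$k$ element $[g]\in\pi_1(W)$ under $\pi_1(W)\to\widehat{\pi_1(W)}$, so it has order dividing $k$, hence finite. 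It remains to see that this order is nontrivial, i.e.\ that $g$ does not die in $\widehat{\pi_1(M)}$: if it did, then using naturality and the isomorphism $\widehat{\pi_1(M)}\cong\widehat{\pi_1(W')}$ for \emph{any} homology cobordism $W'$, one would conclude $g$ maps to a torsion element of $\widehat{\pi_1(W')}$ for all $W'$ --- but that alone is not an immediate contradiction, so one must instead use the essentiality hypothesis directly: there is a homology cobordism $W'$ in which $g$ is essential, and an argument is needed to promote ``essential'' to ``nontrivial in the localization.'' This is where the dimension hypothesis enters.

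For the converse, suppose $g$ is local hidden torsion, so $g$ has infinite order in $\pi_1(M)$ but its image in $\widehat{\pi_1(M)}=:\widehat\pi$ has order $k$ for some finite $k>1$. I need to produce a homology cobordism $W$ of $M$ in which $[g]$ has finite order, and to show that $g$ is essential in every homology cobordism. The essentiality is the same point as above: if $g$ were null-homotopic in some homology cobordism $W'$, then its image in $\widehat{\pi_1(W')}\cong\widehat\pi$ would be trivial, contradicting that it has order $k>1$. For the construction, the natural approach is surgery-theoretic: represent $g$ by an embedded circle $\gamma$ in $M$, and build a cobordism $W$ from $M$ by attaching a handle (or performing surgery) that kills $g^k$ while keeping the cobordism a homology cobordism. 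Concretely, one wants a $2$-handle attached along a circle representing $g^k$ with a suitable framing; because $g$ has infinite order the resulting trace need not be a homology cobordism on its own, so one must correct the homology by further handle attachments, using that $k\cdot[g]=0$ already in a $2$-connected enlargement of $\pi_1(M)$ coming from finitely presented data (here the definition of the localization via $\Omega^R$ is essential: the relation $g^k=1$ holds in $\widehat\pi$, hence in some group $G$ with $\pi_1(M)\to G$ in $\Omega^\Z$, and such a $\Z$-homology $2$-connected map of finitely presented groups is realized by a relative $2$-complex, which one thickens and does surgery on when $n>3$).

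The step I expect to be the main obstacle is the geometric realization in the converse: turning the purely group-theoretic statement ``$g^k$ dies in some finitely presented $2$-connected enlargement $G$ of $\pi_1(M)$'' into an actual topological homology cobordism $W$ of $M$ with $[g]$ of finite order in $\pi_1(W)$. The key technical input is that a $\Z$-homology $2$-connected homomorphism $\pi_1(M)\to G$ between finitely presented groups can be realized by adding $2$- and $3$-cells to $M$, and then --- crucially using $n=\dim M>3$ so that $2$- and $3$-handles embed with trivial normal data and general position applies --- thickening this to a cobordism $W$ with $\pi_1(W)=G$ and $H_*(W,M)=0=H_*(W,M')$ where $M'=\partial W\setminus M$; the condition $n>3$ is exactly what makes handle attachment and the Whitney trick available, and is what fails in dimension $3$. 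The other subtle point, shared by both directions, is pinning down ``essential in any homology cobordism'' $\iff$ ``nonzero image in $\widehat{\pi_1(M)}$,'' which again leans on the fact (valid for $n>3$) that for any homology cobordism $W$ the localization map detects enough of $\pi_1(W)$; I would isolate this as a lemma and expect the dimension hypothesis to be used there as well.
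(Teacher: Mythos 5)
Your overall strategy matches the paper's: both directions hinge on a realization lemma stating that for a closed $n$-manifold with $n>3$, any $2$-connected homomorphism $\pi_1(M)\to G$ with $G$ finitely presented is induced by inclusion into a homology cobordism $W$ with $\pi_1(W)=G$. You correctly identify this lemma, describe its proof by handle attachment and surgery below middle dimension, and use it to complete the ``local hidden torsion implies hidden torsion'' direction exactly as the paper does. The essentiality part of that direction is also handled correctly.

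There is, however, a genuine gap in the ``hidden torsion implies local hidden torsion'' direction, which you acknowledge but do not close. You note that showing the image of $g$ in $\widehat{\pi_1(M)}$ is nontrivial ``is where the dimension hypothesis enters,'' and you say an argument is needed ``to promote essential to nontrivial in the localization'' --- but you do not give it, and your framing (looking at a single $W'$ in which $g$ is essential) is not quite the right contrapositive. The missing step is exactly the same realization lemma you already deploy in the converse, applied in the other logical direction. Since $\widehat\pi$ is a direct limit of finitely presented groups $\pi=G_0\to G_1\to\cdots$ connected by $2$-connected maps, if $g$ were trivial in $\widehat\pi$ it would already be trivial in some $G_i$; the realization lemma then produces a homology cobordism $W$ of $M$ with $\pi_1(W)=G_i$ and inclusion inducing $\pi\to G_i$, so $g$ is null-homotopic in $W$. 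This contradicts the hypothesis that $g$ is essential in \emph{every} homology cobordism. Once you see that both directions use the identical realization lemma --- one to kill $g$ when it dies in the localization, the other to find a $G_i$ with $g$ of finite order --- the proof is complete, and there is no separate ``promotion'' argument needed.
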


\begin{proof}
  For convenience we write $\pi=\pi_1(M)$.  It is known (e.g., see
  \cite[Theorem~2.6]{Cha:2004-1}) that the homology localization
  $\widehat\pi$ is the direct limit of a sequence of 2-connected
  homomorphisms between finitely presented groups~$G_i$:
  \[
  \pi=G_0 \to G_1 \to \cdots \to G_n \to \cdots
  \]

  First we prove the only if direction.  Suppose $g\in\pi$ is hidden
  torsion.  If $g$ were trivial in $\widehat\pi$, then (the image of)
  $g$ would be trivial in some $G_i$.  But, since $n>3$, by appealing
  Lemma~\ref{lemma:homology-cobordism-with-given-fundamental-group}
  below, there is a homology cobordism $W$ of $M$ for which
  $\pi_1(W)=G_i$ and the inclusion-induced map $\pi_1(M)\to \pi_1(W)$
  is equal to the above $\pi\to G_i$.  This contradicts the hypothesis
  that $g$ is hidden torsion.  It follows that $g$ is nontrivial
  in~$\widehat\pi$.  Also, there is a homology cobordism $W$ for which
  $g^k$ is trivial in $\pi_1(W)$ for some $k>0$, by the definition of
  hidden torsion.  It follows that $g^k$ is trivial in
  $\widehat\pi_1(W) \cong \widehat\pi$.

  For the if part, suppose that $g$ is local hidden torsion.  For any
  homology cobordism $W$ of $M$, $\widehat{\pi_1(W)}\cong
  \widehat\pi$.  Therefore $g$ is nontrivial in $\widehat{\pi_1(W)}$,
  and consequently in~$\pi_1(W)$.  Since $g$ has finite order in
  $\widehat\pi$, so does $g$ in some~$G_i$.  Appealing to
  Lemma~\ref{lemma:homology-cobordism-with-given-fundamental-group}
  below, choose a homology cobordism $W$ of $M$ satisfying
  $\pi_1(W)\cong G_i$.  Then $g$ has finite order in~$\pi_1(W)$.  This
  shows that $g$ is local torsion.
\end{proof}

The following fact used in the above proof of
Theorem~\ref{theorem:local-vs-geometric-hidden-torsion-high-dim} is
not due to us and seems known to experts.  Whereas the key idea is
used in various applications in the literature, we did not find an
explicitly written proof elsewhere.  We give a proof for the
convenience of the readers.

\begin{lemma}
  \label{lemma:homology-cobordism-with-given-fundamental-group}
  Suppose $M$ is a closed $n$-manifold with $n>3$ and $\phi\colon
  \pi_1(M) \to G$ is a 2-connected homomorphism with $G$ finitely
  presented.  Then there is a homology cobordism $W$ of $M$ such that
  $\pi_1(W)=G$ and the inclusion $M\to W$ induces~$\phi$.
\end{lemma}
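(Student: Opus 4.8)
The plan is to build $W$ by attaching handles to $M \times [0,1]$ along the top boundary $M \times \{1\}$, using a finite presentation of $G$ to guide the choice of attaching maps, and then arguing that the resulting cobordism is a homology cobordism because $\phi$ is $2$-connected. Write $\pi = \pi_1(M)$, and fix a finite presentation $G = \langle x_1,\dots,x_m \mid r_1,\dots,r_k\rangle$ together with words expressing the images of a generating set of $\pi$ in terms of the $x_j$; since $\phi$ is surjective on $H_1$ and $\pi$ is finitely generated (being the fundamental group of a closed manifold), we may assume $\phi$ itself is surjective and choose generators $x_j$ of $G$ that lift to loops in $M$. First I would attach $1$-handles $D^1 \times D^{n-1}$ to $M \times \{1\}$, one for each generator $x_j$ not already realized by a loop in $\pi$ (or, more uniformly, one for each $x_j$), producing $W_1$ with $\pi_1(W_1) = \pi * F_m$; the inclusion $M \hookrightarrow W_1$ then induces $\pi \to \pi * F_m$, and composing with the obvious surjection $\pi * F_m \to G$ (sending the free generators to the $x_j$ and killing the redundancy between the old generators and the words in the $x_j$) is exactly $\phi$. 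Next I would attach $2$-handles along embedded circles in $\partial W_1$ representing the relators $r_i$ together with the words $g_\ell^{-1} w_\ell(x)$ identifying each old generator $g_\ell$ with its expression in the $x_j$; by general position these circles can be chosen embedded and with trivial normal bundle since $n-1 \geq 3$, so the framing is unique up to the choices we will absorb later. Call the result $W$. By the van Kampen theorem, $\pi_1(W) = G$ and the composite $\pi \to \pi_1(W)$ is $\phi$.

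It remains to check that $W$ is a homology cobordism, i.e.\ $H_*(W, M; \Z) = 0 = H_*(W, M'; \Z)$ where $M' = \partial_+ W$. For the pair $(W, M)$: the handle decomposition shows $H_*(W, M)$ is computed by a chain complex concentrated in degrees $1$ and $2$, namely $\Z^{(\text{number of }2\text{-handles})} \xrightarrow{\partial_2} \Z^{(\text{number of }1\text{-handles})}$, and this is precisely (a chain-level model for) the relative homology of the mapping cone of $\phi$ at the level of $H_1$ and $H_2$. Because $\phi$ is an isomorphism on $H_1$ and a surjection on $H_2$, a short diagram chase in the long exact sequence of the pair $(W,M)$ — using that $H_1(M) \to H_1(W)$ is onto (so $H_1(W,M)=0$) and that $H_2(W) \to H_2(W,M)$ is onto while $H_2(W,M) \to H_1(M)$ is the zero map — forces $H_1(W,M) = H_2(W,M) = 0$, hence $H_*(W,M) = 0$. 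Here I am using that the relevant part of the cellular chain complex of $(W,M)$ with $\Z$ coefficients is exactly determined by how the $1$- and $2$-handles were attached, which by construction encodes the presentation-theoretic data of $\phi$; since a $2$-connected homomorphism is precisely one for which the mapping cone has vanishing $H_1$ and $H_2$, this is immediate. Then $H_*(W, M') = 0$ follows by Poincaré–Lefschetz duality: $H_k(W, M') \cong H^{n+1-k}(W, M) \cong H^{n+1-k}(\text{mapping cone})$, which vanishes in all degrees because $H_*(W,M)=0$ and (using universal coefficients) the cohomology of the pair also vanishes — one should note $H_*(W,M)=0$ over $\Z$ gives $H^*(W,M)=0$ over $\Z$ as well.

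The main obstacle, and the only point requiring genuine care, is the passage from "$\phi$ is $2$-connected on group homology" to "the $1$- and $2$-handle chain complex of $W$ is acyclic relative to $M$". One must be careful that $H_2$ of a group and $H_2$ of a space built from a presentation can differ by the image of $\pi_2$; the clean way around this is to observe that we never need $H_2(W, M) = 0$ to come from a statement about group homology alone — rather, we attach enough $2$-handles to kill $\pi_1$ down to $G$ and to kill the part of $H_1(M; \Z[G])$, or equivalently the part of $H_1$ of the cone, that the presentation forces, and the surjectivity of $\phi$ on $H_2$ guarantees that no further $H_2(W,M)$ survives after we have matched the relations. Concretely: after attaching the handles dictated by the presentation, $H_2(W, M)$ is a quotient of the free module on the $2$-handles by the image of $H_2(W)$; surjectivity of $H_2(\pi) \to H_2(G)$ ensures that $H_2(W) \to H_2(W,M)$ is onto (every $2$-cycle downstairs lifts), so $H_2(W,M)=0$. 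I would present this bookkeeping as a short lemma about the algebraic mapping cone of a $2$-connected homomorphism and then quote it; the geometric input (embeddedness and framing of attaching circles, which needs $n \geq 4$) is routine general position. Finally, $W$ is a manifold with $\partial W = M \sqcup -M'$ by construction, completing the proof.
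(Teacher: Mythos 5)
Your construction attaches $2$-handles along the relators $r_i$ of $G$, and this is where the argument breaks down. The relative chain complex of $(W,M)$ in degrees $1$ and $2$ is $\Z^{(\text{2-handles})}\xrightarrow{\,\partial_2\,}\Z^{(\text{1-handles})}$ with no $3$-chains, so $H_2(W,M)=\ker\partial_2$ — a \emph{submodule}, not (as you assert) ``a quotient of the free module on the $2$-handles by the image of $H_2(W)$.'' The images under $\partial_2$ of the relator handles are the abelianizations $\bar r_i$, which are generically nonzero and generically satisfy nontrivial linear relations among themselves and with the $\bar w_\ell$; any such relation gives a nonzero class in $\ker\partial_2 = H_2(W,M)$. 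Already for $G=\Z/2$ with presentation $\langle x\mid x^2\rangle$ and $\pi=\Z/2$ (so $\phi=\mathrm{id}$ is $2$-connected) your recipe produces one $1$-handle and two $2$-handles with $\partial_2$ represented by the matrix $(2\;\,1)$, giving $H_2(W,M)\cong\Z$. Your subsequent diagram chase compounds the error: exactness forces $H_2(W)\to H_2(W,M)$ to be onto whenever $H_1(M)\to H_1(W)$ is injective, so ``$H_2(W)\to H_2(W,M)$ onto'' carries no information and certainly does not imply $H_2(W,M)=0$; the useful direction would require $H_2(M)\to H_2(W)$ to be \emph{onto}, which fails because $\pi_2(W)$ contributes to $H_2(W)$ beyond $H_2(G)$. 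In short, the hypothesis that $\phi$ is $2$-connected never does any real work in your write-up, and that is a sign the proof is incomplete.

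The paper's proof avoids the problem by not attaching $2$-handles along relators at all. It attaches $2$-handles only along curves of the form $g_i^{-1}\gamma_i h_i$ where $h_i$ is a product of commutators, so that $\partial_2$ is $\pm$ the identity matrix and the first stage $V$ is an honest homology cobordism; the price is that one only gets a surjection $\pi_1(V)\twoheadrightarrow G$, not an isomorphism. The kernel, being normally finitely generated and contained in the commutator subgroup, is then killed by surgery on circles, which keeps $\pi_1$ and $H_1$ correct but introduces a $\Z^r$ summand in $H_2$. Finally — and this is exactly where $2$-connectedness of $\phi$ enters — the surjectivity of $H_2(M)\to H_2(G)\cong H_2(\pi_1(V'))$ together with the exact sequence $\pi_2(V')\to H_2(V')\to H_2(\pi_1(V'))\to 0$ produces embedded spheres (in dimension $n+1\ge 5$) representing classes $(x_i,y_i)$ with $\{y_i\}$ a basis of $\Z^r$, and surgery on these restores $H_*\cong H_*(M)$. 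Your proposal is missing the surgeries on circles and on $2$-spheres entirely; without them the cobordism you build need not be a homology cobordism.
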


\begin{proof} Choose generators $g_1,\ldots,g_k$ of~$G$.  For
  convenience we denote by $F\langle g_j\rangle$ the free group on the
  $g_1,\ldots,g_k$ viewing these as symbols, and often regard an
  element in $F\langle g_j\rangle$ as its image in $G$ under the
  obvious surjection $F\langle g_j\rangle \to G$.

  Since $\phi$ gives rise to an isomorphism on $H_1(-)$, for each
  $i=1,\ldots,k$ there exist $\gamma_i\in \pi_1(M)$ and a product of
  commutators $h_i \in F\langle g_j\rangle$ satisfying $g_i =
  \phi(\gamma_i)\cdot h_i$ in~$G$.  Now consider the $(n+1)$-manifold
  $V$ obtained by attaching handles to $M\times[0,1]$ as follows:
  \[
  V = M\times[0,1] \cup (\text{$k$ 1-handles}) \cup (\text{$k$
    2-handles})
  \]
  where the $j$-th 1-handle corresponds to the $j$-th generator $g_j$
  so that the fundamental group of $V^{(1)}=M\times[0,1]\cup
  (\text{1-handles})$ is identified with free product
  $\pi_1(M)*F\langle g_j\rangle$, and the $i$-th 2-handle is attached
  along an embedded circle representing $g_i^{-1}\gamma_i h_i
  \in\pi_1(V^{(1)})$.  Obviously there is a homomorphism $\psi\colon
  \pi_1(V) \to G$ that the given $\phi\colon\pi_1(M)\to G$ factors
  through.
  Also, computing $H_*(V,M)$ from the handle decomposition, it is
  easily seen that the $V$ is a homology cobordism, since each $h_i$
  is in the commutator subgroup.

  Let $N=\Ker \psi$.  Note that $N$ is contained in the commutator
  subgroup $[\pi_1(V),\pi_1(V)]$ since $\psi$ induces an isomorphism
  on $H_1(-)$.  Since $\psi$ is surjective and $G$ is finitely
  presented, $N$ is normally finitely generated in $\pi_1(V)$, namely
  for some finitely many elements $n_1,\ldots, n_r\in N$, the normal
  subgroup in $\pi_1(V)$ generated by the $n_i$ is equal
  to~$\pi_1(V)$.  Choosing disjoint embedded circles in the interior
  of $V$ representing the $n_i$ and performing surgery on $V$ along
  these circles, we obtain an $(n+1)$-manifold, say~$V'$.  By standard
  arguments, using the fact that each $n_i$ lies in the commutator
  subgroup of $\pi_1(V)$, one shows that $\pi_1(V')=G$,
  $H_2(V')=H_2(V)\oplus \Z^r$, and $H_i(V')=H_i(V)$ for $i>2$, where
  the $\Z^r$ factor of $H_2$ is introduced by surgery.

  We will do surgery to eliminate the $\Z^r$ factor of $H_2(V')$.  For
  this purpose, first we obtain appropriate spherical elements
  in~$H_2$ as follows.
  Since $\phi\colon\pi_1(M)\to G$ is 2-connected, the map
  \[
  H_2(V)\cong H_2(M)\to H_2(G)\cong H_2(\pi_1(V'))
  \]
  is surjective.  It follows that we can choose elements
  $c_i=(x_i,y_i) \in H_2(V')=H_2(V)\oplus \Z^r=H_2(M)\oplus \Z^r$
  ($i=1,\ldots,r$) satisfying the following: the $y_i$ form a basis of
  $\Z^r$ and the $c_i$ lie in the kernel of the map $H_2(V')\to
  H_2(\pi_1(V'))$.  By the exact sequence
  \[
  \pi_2(V') \to H_2(V') \to H_2(\pi_1(V')) \to 0
  \]
  it follows that the elements $c_i$ are spherical.  We may assume
  that the $c_i$ are given as disjoint embedded 2-spheres, and can
  perform surgery along the $c_i$ on $V'$, since this is below middle
  dimension ($n+1\ge 5$).  Its effect is exactly eliminating the
  $c_i$; the result of surgery, say $W$, is an $(n+1)$-manifold with
  $H_*(W)\cong H_*(V) \cong H_*(M)$ as desired.
\end{proof}

In the remainder of this paper, we construct three manifolds whose groups contain local hidden torsion that cannot be seen in the nilpotent completion,
and then use this hidden torsion to construct new manifolds homology equivalent, but not homology cobordant to each other.

\section{Torus bundle group and homology cobordism invariance}
\label{section:computation-for-torus-bundle-group}

We construct homology cobordism invariants in three steps.  In
Section~\ref{subsection:homology-localization-of-Gamma}, we compute
the integral homology localization $\widehat\Gamma$ of a torus bundle
group~$\Gamma$.  In
Section~\ref{subsection:Mixed-coefficient-commutator-series}, we
construct a representation of $\widehat\Gamma$ into an amenable
$D(\Z/p)$-group, where $D(\Z/p)$ denotes Strebel's class
in~\cite{Strebel:1974-1}. Lemma~6.8 in~\cite{Cha-Orr:2009-01}
clarifies how to achieve this using the ``mixed type commutator
series'' defined in~\cite{Cha:2010-01}.  (Here successive terms of the
derived series of a group arise from distinct coefficient subrings of
the rational numbers.)  In
Section~\ref{subsection:homology-cobordism-invariant}, by applying the
$\Z/p$-coefficient version of a key result in \cite{Cha-Orr:2009-01}
(stated as Theorem~\ref{theorem:cha-orr-L2-signature} in this
section), we obtain our homology cobordism invariant.

We remark that we need the localization of $\Z$ at $p$, $\Z_{(p)}=$,
and $\Z/p$, as coefficients in the latter two steps to apply the main
result of \cite{Cha-Orr:2009-01}. While it suffices to use integral
group homology localization in the first step, we present our
computation of group localization for any subring $R$ of $\Z_{(2)}$.
This small generalization may prove useful elsewhere, and omitting it
does not simplify the computation in any way.  (See
Theorem~\ref{theorem:computation-of-localization}.)

\subsection{Homology localization of a twisted torus bundle group}
\label{subsection:homology-localization-of-Gamma}

We consider the group $\Gamma$ defined below, and compute its group
localization.  As stated previously, this group localization has a
non-trivial transfinite lower central series which contains torsion.
We remark that Levine investigated earlier a different version of
algebraic closure for this group~\cite[Proposition~2]{Levine:1991-1}.

Let $\Gamma$ be the group
$$
\Gamma = (\Z^t)^2 \rtimes \Z = \langle x,y,t \mid [x,y]=1,\ txt^{-1}=x^{-1},\ tyt^{-1}=y^{-1} \rangle.
$$
In the first expression the $\Z$ factor is the subgroup generated by
$t$, and $\Z^t$ is the $\Z[t^{\pm1}]$-module with underlying abelian
group $\Z$ on which $t$ acts by negation $t\cdot a = -a$.  The
elements $x$ and $y$ generate the two $\Z^t$ factors.  This group
$\Gamma$ is the fundamental group of an oriented torus bundle over $S^1$ whose
monodromy on $S^1\times S^1$ is given by $(z,w)\mapsto
(z^{-1},w^{-1})$.

We construct the homology localization of our group $\Gamma$ as the limit of groups $\Gamma_{2k-1}$, $k$ positive.
\[
\Gamma_{2k-1} = \Big\langle x,y,t\ \Big|
\  [x,y]^{(2k-1)^2},\ [[x,y],t],\ [[x,y],x],\ [[x,y],y],\
  txt^{-1}=x^{-1},\ tyt^{-1}=y^{-1}
\Big\rangle
\]
When $k=1$, we have $\Gamma_1=\Gamma$.  The group $\Gamma_{2k-1}$ is a
central extension with center the order $(2k-1)^2$ subgroup generated by~$[x,y]$ and with quotient group isomorphic to~$\Gamma$.  For later convenience, we denote this subgroup and quotient group by
$$
\big(\tfrac{1}{(2k-1)^2}\Z\big)/\Z \qquad \mbox{and} \qquad
\big(\tfrac{1}{2k-1}\Z^t\big)^2\times \Z,$$
respectively.  That is, $\Gamma_{2k-1}$ is the extension:
\[
1\to \big(\tfrac{1}{(2k-1)^2}\Z\big)/\Z \to \Gamma_{2k-1} \to
\big(\tfrac{1}{2k-1}\Z^t\big)^2 \rtimes \Z \to 1
\]

Define a function $\phi_{2k-1}\colon \Gamma \to \Gamma_{2k+1}$ by $t\mapsto
t$, $x \mapsto x^{2k+1}$, $y \mapsto y^{2k+1}$.  One easily sees
that this assignment gives us a well-defined homomorphism: by the
identities
\[
  [ab,c]=a[b,c]a^{-1}[a,c],\quad
  [c,ab]=[c,a]a[c,b]a^{-1}
\]
we have 
$$
[x,y] \mapsto [x^{2k+1},y^{2k+1}]=[x,y]^{(2k+1)^2},$$
and using this, the relations are verified.

Similarly, for $2k-1 \mid 2\ell-1$, we define a homomorphism $
\Gamma_{2k-1} \to \Gamma_{2\ell-1} $ given by 
\[
t\mapsto t, \quad x\mapsto
x^{(2\ell-1)/(2k-1)}, \quad  y\mapsto y^{(2\ell-1)/(2k-1)}.
\]
Then the groups $\Gamma_{2k-1}$ together with these homomorphisms form a direct
system.

Recall that the (classical) localization of $\Z$ at the prime~2 is
denoted by $\Z_{(2)}=\{a/b\in \Q \mid b$ is odd$\}$.

\begin{theorem}
  \label{theorem:computation-of-localization}
  Suppose $R$ is a subring of~$\Z_{(2)}$.  Then for any $k$, the
  $R$-homology localization of $\Gamma_{2k-1}$ is the colimit
  $\varinjlim_{k} \Gamma_{2k-1}$ endowed with $\Gamma_{2k-1} \to
  \varinjlim_{k} \Gamma_{2k-1}$.
\end{theorem}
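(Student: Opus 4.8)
The plan is to verify that the colimit $L:=\varinjlim_k\Gamma_{2k-1}$, equipped with the canonical homomorphism $\Gamma_{2k-1}\to L$, satisfies the universal property of the $R$-homology localization in Definition~\ref{definition:homology-localization}. This breaks into two parts: (A) every transition homomorphism $\Gamma_{2k-1}\to\Gamma_{2\ell-1}$ lies in $\Omega^R$; and (B) the group $L$ is $R$-local. Part (A) then identifies all the $\widehat{\Gamma_{2k-1}}$ with one group $\widehat\Gamma$ and, via naturality of $\{p_G\}$, yields a canonical homomorphism $\bar p\colon L\to\widehat\Gamma$; granting Part (B), the universal property of $p_{\Gamma_{2k-1}}$ produces an inverse to $\bar p$ by the usual formal argument, and the uniqueness clauses (using that $L$ is $R$-local) finish the proof. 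Moreover it suffices to treat $k=1$ in Part (B), since $\Gamma_1\to\Gamma_{2k-1}$ lies in $\Omega^R$, so initiality of $\Gamma_1\to L$ among maps to $R$-local groups passes to $\Gamma_{2k-1}\to L$.

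\emph{Part (A).} Each $\Gamma_{2k-1}$ is a central extension $1\to\frac{1}{(2k-1)^2}\Z/\Z\to\Gamma_{2k-1}\to\Gamma\to1$ with finite kernel of the odd order $(2k-1)^2$, and the transition maps respect it: on kernels they are the inclusions $\frac{1}{(2k-1)^2}\Z/\Z\hookrightarrow\frac{1}{(2\ell-1)^2}\Z/\Z$, and on quotients they are the endomorphism $x\mapsto x^m$, $y\mapsto y^m$, $t\mapsto t$ of $\Gamma$, where $m=(2\ell-1)/(2k-1)$. I would run the Lyndon--Hochschild--Serre spectral sequence of this central extension, functorially in $m$, using $H_\ast(\Gamma;\Z)$ (the homology of the closed aspherical torus bundle over $S^1$): $H_0=H_3=\Z$, $H_1=\Z\oplus(\Z/2)^2$, $H_2=\Z$ generated by the fiber torus. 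The abelianization of $\Gamma_{2k-1}$ is $\Z\oplus(\Z/2)^2$ for every $k$, and the transition maps are the identity on it ($m$ is odd and $x,y$ are $2$-torsion in the abelianization), which settles $H_1(-;\Z)$. In degree two, on the line $p+q=2$ the only nonzero $E^2$-terms are $E^2_{2,0}=H_2(\Gamma;\Z)=\Z$ and $E^2_{1,1}=H_1(\Gamma;\Z/(2k-1)^2)=\Z/(2k-1)^2$ (the $2$-torsion of $H_1(\Gamma)$ dies against the coprime coefficients, and $H_2$ of a finite cyclic group vanishes). The transgression $d_2\colon E^2_{2,0}\to E^2_{0,1}=\Z/(2k-1)^2$ is reduction modulo $(2k-1)^2$ (the extension class restricts on the fiber torus to the $\Z/(2k-1)^2$-Heisenberg class, a generator), so $E^3_{2,0}\cong(2k-1)^2\Z$; and $d_2\colon E^2_{3,0}=H_3(\Gamma;\Z)\to E^2_{1,1}$ is cap product with the extension class, hence by Poincar\'e duality on the $3$-manifold surjective, so $E^\infty_{1,1}=0$. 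Therefore the projection $\Gamma_{2k-1}\to\Gamma$ carries $H_2(\Gamma_{2k-1};\Z)$ isomorphically onto the subgroup $(2k-1)^2\Z$ of $H_2(\Gamma;\Z)=\Z$. Since the endomorphism $x\mapsto x^m$ of $\Gamma$ is degree $m^2$ on the fiber torus, it acts by $\cdot m^2$ on $H_2(\Gamma;\Z)=\Z$; so naturality identifies $H_2(\Gamma_{2k-1};\Z)\to H_2(\Gamma_{2\ell-1};\Z)$ with the restriction of $\cdot m^2$, which maps $(2k-1)^2\Z$ bijectively onto $(2\ell-1)^2\Z$, hence is an isomorphism. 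As $R\subseteq\Z_{(2)}$ is flat over $\Z$, tensoring gives $H_1(\Gamma_{2k-1};R)\cong R\oplus(R/2R)^2$ and $H_2(\Gamma_{2k-1};R)\cong R$, with the transition maps isomorphisms in both degrees; the groups being finitely presented, the transition maps lie in $\Omega^R$. (In particular $\Gamma_{2k-1}\to L$ is an isomorphism on $H_{\le1}(-;R)$ and $H_2(-;R)$, since $R$-homology commutes with directed colimits.)

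\emph{Part (B).} To see $L$ is $R$-local I would use that any homomorphism from the finitely generated $\pi$ (for $\alpha\colon\pi\to G$ in $\Omega^R$) to $L$ factors through some finite stage, so it is enough to show: every $\alpha$ in $\Omega^R$ together with $f\colon\pi\to\Gamma_{2k-1}$ extends, after composing with some $\Gamma_{2k-1}\to\Gamma_{2\ell-1}$, to $G\to\Gamma_{2\ell-1}$, uniquely once one passes far enough up the tower. Presenting $G$ as $\pi$ with finitely many new generators $z_1,\dots,z_r$ and relations, the $R$-homology $2$-connectedness of $\alpha$ forces, exactly as in the proof of Lemma~\ref{lemma:homology-cobordism-with-given-fundamental-group}, each $z_i$ to equal a product of an element of $\alpha(\pi)$ with commutators; so solving for the $z_i$ in $\Gamma_{2\ell-1}$ reduces to extracting odd roots of, and solving a homologically trivial system over, the polycyclic group $\Gamma_{2\ell-1}$. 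After enlarging $2k-1$ to absorb the finitely many odd denominators that appear, the divisibility of $x$ and $y$ by odd integers built into the tower, together with the central-by-($3$-manifold) structure of $\Gamma_{2\ell-1}$, provides a solution, and comparing two solutions in a common higher stage gives uniqueness.

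\emph{Main obstacle.} The delicate point is the existence half of Part (B): one must show that the very specific tower $\{\Gamma_{2k-1}\}$ already resolves \emph{every} $\Omega^R$-morphism, not merely the odd-root extensions out of which it is built. Concretely, this requires analyzing an arbitrary finitely presented $\alpha$ in $\Omega^R$ together with a map $\pi\to\Gamma_{2k-1}$ and producing a solution of the resulting finite system of equations in some $\Gamma_{2\ell-1}$; controlling the interaction between the image of $\pi$ and the central and metabelian structure of $\Gamma_{2\ell-1}$ is what makes the precise choice of the groups $\Gamma_{2k-1}$---central extensions of the torus-bundle group $\Gamma$ with kernels of the exact orders $(2k-1)^2$---the correct one, and is the heart of the argument.
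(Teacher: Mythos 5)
Your reduction to two parts is correct, and the overall architecture (establish the maps $\Gamma_{2k-1}\to\Gamma_{2\ell-1}$ lie in $\Omega^R$, then verify the colimit is $R$-local and invoke initiality) mirrors the paper. Your Part (A) is essentially sound, though it runs a single Lyndon--Hochschild--Serre spectral sequence for the central extension $1\to\Z/(2k-1)^2\to\Gamma_{2k-1}\to\Gamma\to1$, whereas the paper runs two (first over $A_{2k-1}$, then over the HNN quotient $\Z$); both land on $H_2(\Gamma_{2k-1};\Z)\cong\Z$ with transitions inducing isomorphisms, so this difference is merely technical, and the flatness of $R\subseteq\Z_{(2)}$ then finishes that step.

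The genuine gap is Part (B). You never prove that $L=\varinjlim_k\Gamma_{2k-1}$ is $R$-local; you sketch an equation-solving strategy (present $G$ over $\pi$ by finitely many generators and $R$-nullhomologous relations, then claim odd-root extraction in a sufficiently high stage $\Gamma_{2\ell-1}$ ``provides a solution''), and you explicitly flag this very step as ``the main obstacle'' and ``the heart of the argument'' without resolving it. That flag is accurate: showing that an \emph{arbitrary} $\alpha\colon\pi\to G$ in $\Omega^R$ together with a map $\pi\to\Gamma_{2k-1}$ can be pushed across $\alpha$ in some finite stage is precisely the hard content, and nothing in your sketch controls how the solutions to the finite system of equations interact with the center and the $\Z[t^{\pm1}]$-module structure of the $\Gamma_{2\ell-1}$. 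The paper avoids this entirely by a structural argument (Appendix~\ref{section:local-groups-and-local-modules}, used via Proposition~\ref{proposition:hat-Gamma-is-local}): an extension $0\to A\to\tilde\Gamma\to\Gamma\to1$ of an $R$-local group by a Cohn local $R\Gamma$-module is again $R$-local (Theorem~\ref{theorem:extension-of-local-group-by-local-module}, proved by a cohomological splitting argument using Vogel's chain-contraction Lemma~\ref{lemma:chain-contraction-argument}). Applying this twice --- once to $1\to(\Z_{(2)}^t)^2\to(\Z_{(2)}^t)^2\rtimes\Z\to\Z\to1$ with Lemma~\ref{lemma:Z^t_(2)-is-local} showing $\Z_{(2)}^t$ is Cohn local over $R[\Z]$ (here the ``$\det A(-1)$ has odd numerator'' trick is exactly where the hypothesis $R\subseteq\Z_{(2)}$ enters), and once to the central extension $0\to\Z_{(2)}/\Z\to\widehat\Gamma\to(\Z_{(2)}^t)^2\rtimes\Z\to1$ with Lemma~\ref{lemma:trivial-G-module-is-RG-local} --- gives $R$-locality of $L$ outright, with no equation solving. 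Without this (or an equivalent) ingredient your proof is incomplete.
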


In particular Theorem~\ref{theorem:computation-of-localization} gives
that the homology localization of $\Gamma=\Gamma_1$ is $\varinjlim_k
\Gamma_{2k-1}$.

By Theorem~\ref{theorem:computation-of-localization},
we have the following commutative diagram, where vertical maps are the
limit maps, and $\Z_{(2)}/\Z$ is a central subgroup
of~$\widehat\Gamma$:
\[
\xymatrix{
  1 \ar[r]
  & \big(\tfrac{1}{(2k-1)^2}\Z\big)/\Z \ar[r]\ar@{^{(}->}[d]
  & \Gamma_{2k-1} \ar[r]\ar@{^{(}->}[d]
  & \big(\tfrac{1}{2k-1}\Z^t\big)^2 \rtimes \Z \ar[r]\ar[r]\ar@{^{(}->}[d]
  & 1
  \\
  1 \ar[r]
  & \Z_{(2)}/\Z \ar[r]
  & \widehat\Gamma \ar[r]
  & (\Z_{(2)}^t)^2 \rtimes \Z \ar[r]
  & 1
}
\]

\begin{proof}[Proof of Theorem~\ref{theorem:computation-of-localization}]
    By abelianizing the above presentation, we see that
    $H_1(\Gamma_{2k-1})\cong (\Z/2)^2 \times \Z$ generated by
    $x$,~$y$,~$t$.  Since we need it later, we give a proof of the
    following stronger fact: 
    the commutator subgroup $[\Gamma_{2k-1},\Gamma_{2k-1}]$ is generated by
    $x^2$,~$y^2$.  In fact, the relations $txt^{-1}=x^{-1}$,
    $tyt^{-1}=y^{-1}$ implies $[t,x]=x^{-2}$ and $[t,y]=y^{-2}$.  The
    commutator $[x,y]$ is also contained in the subgroup generated by
    $x^2$, $y^2$ since
    $[x,y][x^2,y^2]^{k^2-k}=[x,y][x,y]^{4k^2-4k}=[x,y]^{(2k-1)^2}=1$.
    In addition the subgroup generated by $x^2$ and $y^2$ is a normal
    subgroup since $tx^2t^{-1}=x^{-2}$ and $yx^2y^{-1} =
    [y,x^2]x^2=[y,x]^2x^2=[x,y]^{-2}x^2$.  This verifies the claim.
  It follows that $\phi_{2k-1}\colon \Gamma\to\Gamma_{2k-1}$ induces
  an isomorphism on $H_1(-)$.

  Also, it is verified that $H_2(\Gamma_{2k-1})\cong\Z$ and
  $\phi_{2k-1}$ induces an isomorphism on $H_2(-)$, by computation
  using the Lyndon-Hochschild-Serre spectral sequence twice, first for
  the central extension
  \[
  1 \to \big(\tfrac{1}{(2k-1)^2}\Z\big)/\Z \to A_{2k-1} \to
  \big(\tfrac{1}{2k-1}\Z^t\big)^2 \to 1
  \]
  where $A_{2k-1}$ is the subgroup of $\Gamma_{2k-1}$ generated by $x$
  and $y$, and then for
  \[
  1 \to A_{2k-1} \to \Gamma_{2k-1} \to \Z \to 1.
  \]
   More details are as follows.  First observe that the relation
   $[[x,y],t]$ of $\Gamma_{2k-1}$ is redundant, since
   $txt^{-1}=x^{-1}$ and $tyt^{-1}=y^{-1}$ imply
   $t[x,y]t^{-1}=[x^{-1},y^{-1}]$ and since we obtain
   $[x^a,y^b]=[x,y]^{ab}$ for any integers $a$,~$b$ by combining the
   relations $[[x,y],x]=1=[[x,y],y]$ with the identities for $[ab,c]$
   and $[c,ab]$ as above.  Therefore $\Gamma_{2k-1}$ is an HNN
   extension of the subgroup
   \[
   A_{2k-1} = \big\langle x,y \ \big|\  [x,y]^{(2k-1)^2},
   [[x,y],x],[[x,y],y] \big\rangle
   \]
   by the infinite cyclic group generated by $t$ with the action
   $txt^{-1}=x^{-1}$, $tyt^{-1}=y^{-1}$.  This is the second extension
   given above.  Also, the first extension above follows immediately
   from our presentation of~$A_{2k-1}$.

   The first extension is central and has spectral sequence with
   $E^2_{p,q}\cong H_p((\tfrac{1}{2k-1}\Z^t)^2)\otimes
   H_q(\frac{1}{(2k-1)^2}\Z/\Z)$.  We have $E_{0,2}^\infty=0$,
   $E_{1,1}^\infty=E_{1,1}^2=(\tfrac{1}{(2k-1)^2}\Z^t/\Z)^2$, and
   $E_{2,0}^\infty$ is the kernel of
   \[
   d_{2,0}\colon H_2\big((\tfrac{1}{2k-1}\Z^t)^2\big) \to
   H_1\big(\tfrac{1}{(2k-1)^2}\Z/\Z\big).
   \]
   Viewing $d_{2,0}$ as the transgression map, one sees that $d_{2,0}$
   is identical with the projection $\frac{1}{(2k-1)^2}\Z \to
   \frac{1}{(2k-1)^2}\Z/\Z$.  It follows that $H_2(A_{2k-1})$ is given
   by
   \[
   0 \to (\tfrac{1}{(2k-1)^2}\Z^t/\Z)^2 \to H_2(A_{2k-1}) \to \Z \to 0.
   \]
   It is easily seen that $H_1(A_{2k-1})=(\frac{1}{2k-1}\Z^t)^2$ from the
   presentation.

   The expression of $\Gamma_{2k-1}$ as an extension of $A_{2k-1}$ by
   $\Z$ gives a spectral sequence with
   $E_{p,q}^2=H_p(\Z;H_q(A_{2k-1}))$.  From this we obtain the Gysin sequence
   \[
   H_2(A_{2k-1})\xrightarrow{t_*-1} H_2(A_{2k-1}) \to H_2(\Gamma_{2k-1})
   \to H_1(A_{2k-1}) \xrightarrow{t_*-1} H_1(A_{2k-1}).
   \]
   Combining this with our computation of $H_1$, $H_2$ of $A_{2k-1}$
   it follows that $H_2(\Gamma_{2k-1})=\Z$ as claimed.

   Now, from the above
 it follows that $\phi_{2k-1}\colon\Gamma\to \Gamma_{2k-1}$, and
 consequently $\Gamma_{2k-1}\to\Gamma_{2\ell-1}$ for $2k-1\mid
 2\ell-1$, are in~$\Omega^R$ for any~$R$.  Therefore, the homomorphism
 $\Gamma \to \Gamma_{2k-1}$ induces an isomorphism on localization,
 and it suffices to show that $\varinjlim_k \Gamma_{2k-1}$ is the
 $R$-homology localization of~$\Gamma$.
  
  For any $R$-local group $K$ and a homomorphism $f\colon \Gamma\to
  K$, there is a unique homomorphism $g_{2k-1} \colon \Gamma_{2k-1}
  \to K$ such that $f=g\phi_{2k-1}$ since $\phi_{2k-1} \in \Omega^R$.
  It follows that the limit $g=\varinjlim_{k} g_{2k-1}$ is the unique
  homomorphism making the following diagram commute:
  \[
  \begin{diagram}
    \node{\Gamma}\arrow{e}\arrow{s,l}{f}
    \node{\textstyle\varinjlim_k\Gamma_{2k-1}}\arrow{sw,b}{g}
    \\
    \node{K}
  \end{diagram}
  \]
  
  Now, by the following proposition, $\varinjlim_k \Gamma_{2k-1}$ is
  the $R$-homology localization of~$\Gamma$.
\end{proof}

\begin{proposition}
  \label{proposition:hat-Gamma-is-local}
  If $R$ is a subring of $\Z_{(2)}$, then $\varinjlim_k\Gamma_{2k-1}$
  is $R$-local.
\end{proposition}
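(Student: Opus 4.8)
The plan is to verify $R$-locality of $K:=\varinjlim_k\Gamma_{2k-1}$ by hand, splitting it into the existence and uniqueness of extensions and reducing the first (harder) part to a cofinality property of the tower. Write $\iota_n\colon\Gamma_{2n-1}\to K$ and $\tau_{m,n}\colon\Gamma_{2m-1}\to\Gamma_{2n-1}$ (for $2m-1\mid 2n-1$) for the structure and transition maps, so that $\iota_m=\iota_n\circ\tau_{m,n}$. Fix $\alpha\colon\pi\to G$ in $\Omega^R$ and $f\colon\pi\to K$. Since $\pi$ and $G$ are finitely presented and $K$ is a filtered colimit, $f=\iota_m\circ f_m$ for some $m$ and $f_m\colon\pi\to\Gamma_{2m-1}$, and any candidate extension likewise factors through a finite stage. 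Now $\Omega^R$ is closed under cobase change (van Kampen for $\pi_1$, Mayer--Vietoris for homology, using that $\alpha$ is an $H_1(-;R)$-isomorphism and $H_2(-;R)$-epimorphism), so the pushout $P:=G\ast_{\pi}\Gamma_{2m-1}$ is finitely presented and $\Gamma_{2m-1}\to P$ lies in $\Omega^R$. Gluing $\iota_m$ with a hypothetical $g$ identifies ``extend $f$ over $G$'' with ``extend $\iota_m$ over $\Gamma_{2m-1}\to P$''; and to do the latter it suffices to find, for some $n$, a homomorphism $\psi\colon P\to\Gamma_{2n-1}$ restricting to $\tau_{m,n}$ on $\Gamma_{2m-1}$, since then $g:=(\iota_n\psi)|_G$ works. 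Thus existence is equivalent to the cofinality statement: \emph{every $\Omega^R$-morphism $\Gamma_{2m-1}\to P$ with $P$ finitely presented is dominated by some transition map $\tau_{m,n}$.}

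This cofinality is the technical heart. I would prove it by decomposing an $\Omega^R$-morphism $\beta\colon\Gamma_{2m-1}\to P$ into elementary $R$-homology $2$-connected moves --- adjoining a generator homologically determined over $R$ by the old ones modulo commutators (in particular adjoining odd roots of existing elements) and adjoining a relation lying in the commutator subgroup --- and then realizing each move inside $\Gamma_{2n-1}$ for $n$ chosen so that $(2n-1)/(2m-1)$ is divisible by every odd integer that occurs. The key point is that under $\tau_{m,n}$ the generators $x,y$ go to $x^{(2n-1)/(2m-1)},\,y^{(2n-1)/(2m-1)}$, so every element of $\tau_{m,n}(\Gamma_{2m-1})\subseteq\Gamma_{2n-1}$ has the odd roots demanded by the first kind of move; and, because $\Gamma_{2n-1}$ is a central extension of $\big(\tfrac{1}{2n-1}\Z^t\big)^2\rtimes\Z$ by the cyclic group of order $(2n-1)^2$ generated by $[x,y]$, the commutator-type relations of the second kind become solvable once $n$ is large. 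Carrying this out --- interleaving the two kinds of moves, checking that the $H_2(-;R)$-surjectivity of $\beta$ is exactly what makes the resulting equations solvable in $\Gamma_{2n-1}$, and verifying the required identities from the relations $[[x,y],x]=[[x,y],y]=[[x,y],t]=1$ together with $txt^{-1}=x^{-1}$, $tyt^{-1}=y^{-1}$ --- is where I expect the real difficulty to lie.

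Uniqueness I would handle more routinely. Given two extensions $g,g'$ of $f$, after pushing into a large enough stage they are induced by $\bar g,\bar g'\colon G\to\Gamma_{2n-1}$ agreeing on $\alpha(\pi)$. Using the characteristic filtration $1\subseteq Z\subseteq[K,K]\subseteq K$ (with $Z\cong\Z_{(2)}/\Z$ central and all three quotients abelian, inherited from the analogous filtration of each $\Gamma_{2k-1}$), compare $\bar g$ and $\bar g'$ one layer at a time: their ``ratio'' is a $1$-cocycle on $G$ with values in an abelian $R$-module coefficient system and trivial on $\alpha(\pi)$; since $\alpha$ is an $H_1(-;R)$-isomorphism and an $H_2(-;R)$-epimorphism, the restriction $H^1(G;-)\to H^1(\pi;-)$ is injective on such coefficients, so the cocycle is a coboundary at each stage, and a short analysis of the relevant centralizers in $K$ then forces $g=g'$.
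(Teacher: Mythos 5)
Your proposal takes a genuinely different route from the paper, but it has two significant gaps that leave the hard part undone.

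The paper's proof does not verify the universal mapping property of $\varinjlim_k\Gamma_{2k-1}$ directly. Instead it proves a general extension theorem (Theorem~\ref{theorem:extension-of-local-group-by-local-module}): if $\Gamma$ is $R$-local and $A$ is a \emph{Cohn local} $R\Gamma$-module, then any extension $0\to A\to\tilde\Gamma\to\Gamma\to1$ yields an $R$-local group $\tilde\Gamma$. It then builds $\widehat\Gamma$ from $\Z$ by stacking two such extensions with coefficient modules $(\Z_{(2)}^t)^2$ and $\Z_{(2)}/\Z$, verifying that each is Cohn local by an explicit determinant argument over $R[\Z]$ (Lemma~\ref{lemma:Z^t_(2)-is-local}) and a trivial-action observation (Lemma~\ref{lemma:trivial-G-module-is-RG-local}). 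The key technical input is Vogel's chain-contraction lemma (Lemma~\ref{lemma:chain-contraction-argument}), which converts $R$-homology $2$-connectivity of $\alpha\colon\pi\to G$ into vanishing of $H^i(G,\pi;A)$ for $i\le 2$ with Cohn local coefficients $A$; this is precisely what makes the obstruction-theoretic splitting and uniqueness arguments work.

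Your existence step is where the first gap lies. The cofinality claim --- that every $\Omega^R$-morphism $\Gamma_{2m-1}\to P$ with $P$ finitely presented is dominated by some $\tau_{m,n}$ --- is not a reduction of the problem but is essentially a restatement of it: it is equivalent to saying that the tower $\{\Gamma_{2k-1}\}$ is cofinal among all $\Omega^R$-extensions of $\Gamma$, which is the whole content of Theorem~\ref{theorem:computation-of-localization}. The sketch you offer (decompose $\beta$ into elementary moves and realize each inside $\Gamma_{2n-1}$) is plausible in outline, but realizing a general relation-adjunction or root-adjunction inside the specific groups $\Gamma_{2n-1}$ is exactly the difficulty: you would have to show that the structure of $\Gamma_{2n-1}$ is rich enough to solve arbitrary $R$-nullhomologous systems, and the observation about odd roots of $x$ and $y$ does not by itself control roots of general words involving $t$, nor the interaction with the relation layer. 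You acknowledge this is where the real difficulty lies, and I agree --- but that means the proof is not there yet.

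The second gap is in your uniqueness argument. You invoke injectivity of the restriction $H^1(G;A)\to H^1(\pi;A)$ for the coefficient modules arising in the filtration $1\subseteq Z\subseteq[K,K]\subseteq K$. This injectivity is \emph{not} automatic from $\alpha$ being $R$-homology $2$-connected: for a general $RG$-module $A$, $R$-homology $2$-connectivity of $\alpha$ gives no control over $H^1(-;A)$. The statement you need is precisely the cohomological vanishing supplied by Lemma~\ref{lemma:chain-contraction-argument}, which requires the coefficient module to be Cohn local. Thus your uniqueness argument secretly assumes the Cohn-locality of $\Z_{(2)}/\Z$, $(\Z_{(2)}^t)^2$ (with its nontrivial negation action), and the abelianization, which is exactly the technical content the paper isolates and proves in Lemmas~\ref{lemma:trivial-G-module-is-RG-local} and~\ref{lemma:Z^t_(2)-is-local}. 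Without identifying and verifying that condition, the cohomological comparison of splittings does not go through.

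In short: your high-level structure (reduce to finite stages, compare splittings layer by layer) is reasonable, and the layer-by-layer idea is related in spirit to the paper's inductive use of Theorem~\ref{theorem:extension-of-local-group-by-local-module}, but both the existence and the uniqueness halves of your argument rest on unestablished claims whose proofs would effectively reconstruct the Cohn local module machinery that the paper uses from the start.
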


This follows from the fact that $\Z_{(2)}^t$ is a local module over
$\Z[t^{\pm1}]$ in the sense of Cohn and Vogel.  For completeness, we
present a proof of Proposition~\ref{proposition:hat-Gamma-is-local} in
Appendix~\ref{section:local-groups-and-local-modules}.

The following shows that  $\widehat\Gamma$, and for $k > 1$, $\Gamma_{2k-1}$ have
nontrivial $\omega$-term in the lower central series.



\begin{lemma}
  \label{lemma:omega-lower-central-series-gamma-hat}
  The transfinite lower central subgroup $(\Gamma_{2k-1})_\omega$ is
  equal to the subgroup $(\frac{1}{(2k-1)^2}\Z)/\Z$ of
  $\Gamma_{2k-1}$, and $(\Gamma_{2k-1})_{\omega+1}$ is trivial.
  Consequently $\widehat\Gamma_\omega = \Z_{(2)}/\Z$ and
  $\widehat\Gamma_{\omega+1}$ is trivial.
\end{lemma}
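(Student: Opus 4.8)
The plan is to compute the finite lower central series terms $(\Gamma_{2k-1})_q$ explicitly and intersect them. First I would work in $\Gamma_{2k-1}$ and use the structural facts established in the proof of Theorem \ref{theorem:computation-of-localization}: the commutator $[x,y]$ is central of order $(2k-1)^2$, and from $txt^{-1}=x^{-1}$, $tyt^{-1}=y^{-1}$ one has $[t,x]=x^{-2}$, $[t,y]=y^{-2}$, together with $[x^a,y^b]=[x,y]^{ab}$. The key computation is to identify $(\Gamma_{2k-1})_2 = [\Gamma_{2k-1},\Gamma_{2k-1}]$, which by the proof of Theorem \ref{theorem:computation-of-localization} is the subgroup generated by $x^2$ and $y^2$ (note this also contains $[x,y]$, since $[x,y]^{(2k-1)^2}=1$ forces $[x,y]$ into the subgroup generated by $x^2,y^2$). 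Then I would show by induction that $(\Gamma_{2k-1})_q$ is the subgroup generated by $x^{2^{q-1}}$, $y^{2^{q-1}}$, and $[x,y]$ for $q \ge 2$: taking a commutator with $t$ squares the exponent on $x$ and $y$ (since $[t,x^m]=x^{-2m}$ modulo the central subgroup), and commutators among $x$-, $y$-powers only produce powers of the central element $[x,y]$, which are already present. One must check nothing smaller appears, which follows because the quotient $\Gamma_{2k-1}/\langle[x,y]\rangle \cong (\frac{1}{2k-1}\Z^t)^2 \rtimes \Z$ is metabelian with the $\Z$ acting by negation, so its lower central series terms are exactly $(2^{q-1}\cdot\frac{1}{2k-1}\Z^t)^2$ and never become trivial, while the central $\langle[x,y]\rangle\cong(\frac{1}{(2k-1)^2}\Z)/\Z$ is killed by no further commutators once it has been absorbed.

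Taking the intersection over all finite $q$: in the quotient $(\frac{1}{2k-1}\Z^t)^2 \rtimes \Z$ the images of $(\Gamma_{2k-1})_q$ are $(2^{q-1}\cdot\frac{1}{2k-1}\Z^t)^2$, whose intersection over $q$ is trivial since $\bigcap_q 2^{q-1}\Z = 0$ in $\frac{1}{2k-1}\Z$ (the group $\frac{1}{2k-1}\Z$ is still a rank-one free abelian group, and $2^{q-1}\to\infty$). Hence $(\Gamma_{2k-1})_\omega \subseteq \langle[x,y]\rangle = (\frac{1}{(2k-1)^2}\Z)/\Z$. The reverse inclusion holds because $[x,y] = [x,y]^{1} $ lies in $[\Gamma_{2k-1},\Gamma_{2k-1}]$, and more strongly, for every finite $q$ the element $[x,y]$ lies in $(\Gamma_{2k-1})_q$: indeed $[x,y] = [x^a, y^b]$ whenever $ab \equiv 1 \pmod{(2k-1)^2}$, and we may take $a = b = 2^{q-1} \cdot c$ for a suitable unit $c$ modulo $(2k-1)^2$ (such a $c$ exists since $2$ is a unit mod the odd number $(2k-1)^2$), which exhibits $[x,y]$ as a commutator of elements of $(\Gamma_{2k-1})_{q-1}$ or lower, hence in $(\Gamma_{2k-1})_q$. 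Therefore $(\Gamma_{2k-1})_\omega = (\frac{1}{(2k-1)^2}\Z)/\Z$ exactly. Since this subgroup is central, $(\Gamma_{2k-1})_{\omega+1} = [\Gamma_{2k-1}, (\Gamma_{2k-1})_\omega] = 1$.

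For the final sentence about $\widehat\Gamma = \varinjlim_k \Gamma_{2k-1}$: the lower central series commutes with directed colimits in each finite degree, so $(\widehat\Gamma)_q = \varinjlim_k (\Gamma_{2k-1})_q$ for all finite $q$; passing the above identifications through the colimit, the image of $(\widehat\Gamma)_q$ in $(\Z_{(2)}^t)^2 \rtimes \Z$ is $(2^{q-1}\Z_{(2)}^t)^2$. Here the relevant subtlety — and the main point to get right — is that $\bigcap_q 2^{q-1}\Z_{(2)} = 0$, which holds because $\Z_{(2)}$ is a domain in which $2$ is a nonunit, so the intersection of the powers of the ideal $(2)$ is zero (Krull intersection, or directly). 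This gives $\widehat\Gamma_\omega \subseteq \Z_{(2)}/\Z$, and the reverse inclusion follows as before since each $[x,y]^{1/(2k-1)^2}$, i.e. each torsion element of $\Z_{(2)}/\Z$, lies in every finite term of the lower central series of the appropriate $\Gamma_{2k-1}$. The main obstacle is thus the bookkeeping in the inductive description of $(\Gamma_{2k-1})_q$ — in particular verifying that no "extra" small subgroups sneak in — which is handled cleanly by splitting into the central piece and the metabelian quotient and treating each separately.
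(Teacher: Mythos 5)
Your proposal is correct and follows essentially the same route as the paper: establish by induction the explicit description of the finite terms $(\Gamma_{2k-1})_q$, observe that their images in the metabelian quotient $(\frac{1}{2k-1}\Z^t)^2\rtimes\Z$ have trivial intersection, and then use the oddness of the order of $[x,y]$ (either by writing $[x,y]=[x^a,y^b]$ with $a,b$ highly $2$-divisible as you do, or by observing $[x,y]^{2^q}$ already lies deep in the series and generates the same cyclic group, as the paper does) to obtain the reverse inclusion, finishing with centrality and a direct limit. The only cosmetic difference is that you list $[x,y]$ explicitly among the generators of $(\Gamma_{2k-1})_q$, whereas the paper notes it is already contained in $\langle x^{2^{q-1}},y^{2^{q-1}}\rangle$ because its order is odd.
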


\begin{proof}
  As previously shown, $(\Gamma_{2k-1})_2 = [\Gamma_{2k-1},\Gamma_{2k-1}]$
  is the subgroup generated by $x^2$,~$y^2$.  By an induction using
  the same argument, $(\Gamma_{2k-1})_{q+1}$ is the subgroup generated by
  $x^{2^q}$,~$y^{2^q}$.  Therefore $(\Gamma_{2k-1})_\omega$
  lies in the kernel of $\Gamma_{2k-1} \to
  (\frac{1}{2k-1}\Z^t)^2\rtimes\Z$, which is the subgroup
  $(\frac{1}{(2k-1)^2}\Z)/\Z$.

  We claim that the generator $[x,y]$ of $(\frac{1}{(2k-1)^2}\Z/\Z)$
  lies in $(\Gamma_{2k-1})_\omega$.  
  %
  %
  Since $x^{2^q}\in (\Gamma_{2k-1})_{q+1}$, it follows that 
  $[x^{2^q},y]=[x,y]^{2^q}$ lies in $(\Gamma_{2k-1})_{q+2}$.  Since $[x,y]$
  has odd order, it follows that $[x,y]\in (\Gamma_{2k-1})_{q+2}$.
  Since this holds for any $q$, $[x,y] \in (\Gamma_{2k-1})_\omega$ as
  claimed.  It follows that $(\Gamma_{2k-1})_\omega =
  (\frac{1}{(2k-1)^2}\Z)/\Z$.

  Since the generator $[x,y]$ of $(\Gamma_{2k-1})_\omega$ is central,
  $(\Gamma_{2k-1})_{\omega+1}$ is trivial.

  The conclusion on $\widehat\Gamma$ is obtained by taking the direct
  limit.
\end{proof}

In the remaining part of this paper, we will always use the integral
homology localization of $\Gamma$, which we denote
by~$\widehat\Gamma$.

\subsection{Mixed-coefficient commutator series}
\label{subsection:Mixed-coefficient-commutator-series}

The homology localization $\widehat\Gamma$ is not in Strebel's class
$D(R)$ for $R=\Z$, $\Q$ or $\Z/p$, while we need (amenable and)
$D(R)$-groups to apply the main result of \cite{Cha-Orr:2009-01}.  In
this subsection, we construct a representation of $\widehat\Gamma$ to
an amenable $D(\Z/p)$-group, by separating $p$-torsion elements from
those with order coprime to~$p$.

For the above purpose, we consider the mixed-type coefficient
commutator series $\{\cP^n G\}$ of a group $G$ which was introduced
in~\cite{Cha:2010-01}.  Suppose $\cP=(R_0,R_1,\ldots)$ be a sequence
of commutative rings with unity.  Then for a group $G$, the series
$\{\cP^n G\}$ is defined inductively by
\begin{align*}
  \cP^0 G &= G,
  \\
  \cP^{n+1}G &= \smash[t]{
    \Ker\Big\{\cP^{n}G \to
    \frac{\cP^{n}G}{[\cP^{n}G,\cP^{n}G]} \to
    \frac{\cP^{n}G}{[\cP^{n}G,\cP^{n}G]} \otimesover{\Z} R_n \Big\}.
  }
\end{align*}

The subgroup $\cP^n G$ is a characteristic normal subgroup of~$G$.
 
We use the case $\cP=(\Z,\Z,\Z_{(p)})$ applied to
$\widehat\Gamma$.  (In this case we define and use
$\cP^n \widehat\Gamma$ only for $n\le 3$.)

\begin{lemma}
  \label{lemma:computation-mixed-coeffcient-series}
  Suppose $p$ is a prime.  For $\cP=(\Z,\Z,\Z_{(p)})$, the following
  hold:
  \begin{enumerate}
  \item $\cP^1 \widehat\Gamma$ is the subgroup of $\widehat\Gamma$
    generated by $\Z_{(2)}/\Z$ and $(2\Z_{(2)}^t)^2$, and
    $\widehat\Gamma/\cP^1\widehat\Gamma \cong (\Z/2)^2\times \Z$.
  \item $\cP^2 \widehat\Gamma$ is the subgroup $\Z_{(2)}/\Z$, and
    $\widehat\Gamma/\cP^2\widehat\Gamma \cong (\Z_{(2)}^t)^2\rtimes
    \Z$.
  \item $\cP^3 \widehat\Gamma$ is the subgroup $(\Z_{(2)} \cap
    \Z_{(p)})/\Z$, of $\cP^2\widehat\Gamma= \Z_{(2)}/\Z$.  Also,
    \[
    \cP^2\widehat\Gamma/\cP^3\widehat\Gamma \cong 
    \begin{cases}
      0 & \text{if }p=2,\\
      \Z[\frac{1}{p}]/\Z &\text{otherwise}.
    \end{cases}
    \]
    Consequently, if $p=2$, then $\widehat\Gamma/\cP^3\widehat\Gamma
    \cong (\Z_{(2)}^t)^2\rtimes \Z$.  If $p$ is odd, then we have the following central extension:
    \[
    1\to \Z[\tfrac{1}{p}]/\Z \to
    \widehat\Gamma/\cP^3\widehat\Gamma \to (\Z_{(2)}^t)^2\rtimes \Z
    \to 1
    \]
  \end{enumerate}
\end{lemma}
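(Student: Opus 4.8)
The plan is to reduce the whole computation to the finitely presented approximations $\Gamma_{2k-1}$ and then pass to the colimit. For the sequence $\cP=(\Z,\Z,\Z_{(p)})$ the first two terms are ordinary derived subgroups, $\cP^1 G=[G,G]$ and $\cP^2 G=[\cP^1 G,\cP^1 G]$, since $\otimesover{\Z}\Z$ does nothing; only $\cP^3 G=\Ker\{\cP^2 G\to (\cP^2 G/[\cP^2 G,\cP^2 G])\otimesover{\Z}\Z_{(p)}\}$ involves $\Z_{(p)}$. Each $\cP^n$ is a characteristic subgroup, hence functorial, and since abelianization, tensoring with $\Z_{(p)}$, and kernels of group homomorphisms all commute with filtered colimits, Theorem~\ref{theorem:computation-of-localization} yields $\cP^n\widehat\Gamma=\varinjlim_k\cP^n\Gamma_{2k-1}$. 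So it suffices to compute $\cP^n\Gamma_{2k-1}$ for $n\le3$ and to record how the transition maps $\Gamma_{2k-1}\to\Gamma_{2\ell-1}$ act on these subgroups.

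For parts (1) and (2) I would use the fact, established in the proof of Theorem~\ref{theorem:computation-of-localization}, that $\cP^1\Gamma_{2k-1}=[\Gamma_{2k-1},\Gamma_{2k-1}]$ is generated by $x^2$ and $y^2$. This subgroup contains the central $(\tfrac1{(2k-1)^2}\Z)/\Z=\langle[x,y]\rangle$ (because $[x,y][x^2,y^2]^{k^2-k}=1$) and has image $2\cdot(\tfrac1{2k-1}\Z)^2$ in $(\tfrac1{2k-1}\Z^t)^2\rtimes\Z$, so it is precisely the preimage in $\Gamma_{2k-1}$ of that subgroup. Passing to the colimit — the centers union to $\Z_{(2)}/\Z$ and the images union to $(2\Z_{(2)}^t)^2$, since $2k-1$ runs over all odd integers and the transition maps on the quotients are the evident inclusions — identifies $\cP^1\widehat\Gamma$ with the preimage of $(2\Z_{(2)}^t)^2$ under $\widehat\Gamma\to(\Z_{(2)}^t)^2\rtimes\Z$, whence $\widehat\Gamma/\cP^1\widehat\Gamma\cong(\Z/2)^2\times\Z$ because $t$ acts trivially modulo squares. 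Since $[x,y]$ is central, $\langle x^2,y^2\rangle$ is nilpotent of class $\le2$, so $\cP^2\Gamma_{2k-1}$ is generated by $[x^2,y^2]=[x,y]^4$; as $\gcd(4,(2k-1)^2)=1$ this is the whole center $(\tfrac1{(2k-1)^2}\Z)/\Z$, giving $\cP^2\widehat\Gamma=\varinjlim_k(\tfrac1{(2k-1)^2}\Z)/\Z=\Z_{(2)}/\Z$ and $\widehat\Gamma/\cP^2\widehat\Gamma\cong(\Z_{(2)}^t)^2\rtimes\Z$.

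For part (3), since $\cP^2\Gamma_{2k-1}\cong\Z/(2k-1)^2$ is abelian I would compute $\cP^3\Gamma_{2k-1}=\Ker\{\Z/(2k-1)^2\to\Z/(2k-1)^2\otimesover{\Z}\Z_{(p)}\}$. If $p=2$ then $(2k-1)^2$ is a unit in $\Z_{(2)}$, the tensor product vanishes, and $\cP^3\Gamma_{2k-1}=\cP^2\Gamma_{2k-1}$; hence $\cP^3\widehat\Gamma=\cP^2\widehat\Gamma=\Z_{(2)}/\Z$ and $\widehat\Gamma/\cP^3\widehat\Gamma\cong(\Z_{(2)}^t)^2\rtimes\Z$. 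If $p$ is odd, write $p^{a_k}$ for the exact power of $p$ dividing $(2k-1)^2$; then $\Z/(2k-1)^2\otimesover{\Z}\Z_{(p)}\cong\Z/p^{a_k}$ and the map is the canonical projection, whose kernel is the prime-to-$p$ part $\tfrac1{m_k}\Z/\Z\subseteq\tfrac1{(2k-1)^2}\Z/\Z$ with $m_k=(2k-1)^2/p^{a_k}$. As $m_k$ is the square of the prime-to-$p$ part of $2k-1$, and $2k-1$ exhausts the odd integers, the colimit — the transition maps again restricting to inclusions inside $\Q/\Z$ — is $\bigcup_{\gcd(m,2p)=1}\tfrac1m\Z/\Z=(\Z_{(2)}\cap\Z_{(p)})/\Z\subseteq\cP^2\widehat\Gamma=\Z_{(2)}/\Z$. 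Finally $\cP^2\widehat\Gamma/\cP^3\widehat\Gamma=\Z_{(2)}/(\Z_{(2)}\cap\Z_{(p)})$, which is $0$ when $p=2$ and is isomorphic to $\Z[\tfrac1p]/\Z$ when $p$ is odd (the class of $a/b\in\Z_{(2)}$ depends only on the $p$-primary part of the odd denominator $b$); and the asserted central extension of $(\Z_{(2)}^t)^2\rtimes\Z=\widehat\Gamma/\cP^2\widehat\Gamma$ is then just $1\to\cP^2\widehat\Gamma/\cP^3\widehat\Gamma\to\widehat\Gamma/\cP^3\widehat\Gamma\to\widehat\Gamma/\cP^2\widehat\Gamma\to1$, which is central because $\cP^2\widehat\Gamma=\Z_{(2)}/\Z$ is central in $\widehat\Gamma$.

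I expect the only genuinely delicate points to be the colimit bookkeeping — especially checking in part (3) that the transition maps restrict on the $\cP^2$-terms to the obvious inclusions inside $\Q/\Z$, so that the colimit of the prime-to-$p$ parts of the $\Z/(2k-1)^2$ really is $(\Z_{(2)}\cap\Z_{(p)})/\Z$ — together with the elementary clearing-of-denominators identity $\Z_{(2)}/(\Z_{(2)}\cap\Z_{(p)})\cong\Z[\tfrac1p]/\Z$. Everything else is presentation-level commutator bookkeeping of the kind already carried out in the proof of Theorem~\ref{theorem:computation-of-localization}.
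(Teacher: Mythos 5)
Your proposal is correct and follows essentially the same route as the paper: identify $\cP^1,\cP^2$ with the ordinary derived series (since $\otimes_\Z\Z$ is the identity), reuse the presentation-level computation from the proof of Theorem~\ref{theorem:computation-of-localization} that $[\Gamma_{2k-1},\Gamma_{2k-1}]=\langle x^2,y^2\rangle$ and $[\cP^1\Gamma_{2k-1},\cP^1\Gamma_{2k-1}]=\langle[x,y]\rangle$, pass to the colimit, and then compute the kernel of $\cdot\otimes\Z_{(p)}$ on the center. The only cosmetic difference is in part (3): you compute $\cP^3$ on each $\Gamma_{2k-1}$ as the prime-to-$p$ part of $\Z/(2k-1)^2$ and then take the colimit, whereas the paper computes directly that $\Ker\{\Z_{(2)}/\Z\to(\Z_{(2)}/\Z)\otimes\Z_{(p)}\}$ consists of the elements of order coprime to $p$, identifying it at once as $(\Z_{(2)}\cap\Z_{(p)})/\Z$; the latter is a bit leaner but the content is identical, and your check that the transition maps restrict on the centers to the standard inclusions inside $\Q/\Z$ is exactly the bookkeeping that makes the colimit version go through.
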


\begin{proof}
  From our choice of $\cP$, it follows that $\cP^k(-)$ is the ordinary
  derived series for $k\le 2$.

  In the proof of Theorem~\ref{theorem:computation-of-localization},
  we have checked that $\cP^1 \Gamma_{2k-1} =
  [\Gamma_{2k-1},\Gamma_{2k-1}]$ is the subgroup generated by
  $x^2$,~$y^2$.  Therefore, by the direct limit construction of
  $\widehat\Gamma$, it follows that $\cP^1 \widehat\Gamma =
  [\widehat\Gamma,\widehat\Gamma]$ is the subgroup generated by
  $\Z_{(2)}/\Z$ and $(2\Z_{(2)}^t)^2$, and $\widehat\Gamma/\cP^1
  \widehat\Gamma \cong (\Z_{(2)}^t/2\Z_{(2)}^t)^2\rtimes \Z \cong
  (\Z/2)^2\times \Z$.  This proves~(1).

  Now, again from the presentation of $\Gamma_{2k-1}$,
  $\cP^2\Gamma_{2k-1}=[\cP^1\Gamma_{2k-1},\cP^1\Gamma_{2k-1}]$ is
  generated by $[x^2,y^2]=[x,y]^4$, which is equal to the subgroup
  generated by $[x,y]$ since $[x,y]$ has odd order in~$\Gamma_{2k-1}$.
  Therefore
  $\cP^2\widehat\Gamma=[\cP^1\widehat\Gamma,\cP^1\widehat\Gamma]$ is
  seen to be the subgroup $\Z_{(2)}/\Z$, and
  $\widehat\Gamma/\cP^2\widehat\Gamma \cong (\Z_{(2)}^t)^2\rtimes \Z$.
  This proves~(2).

  For (3), recall that $\cP^3\widehat\Gamma$ is the kernel of the map
  $\Z_{(2)}/\Z \to (\Z_{(2)}/\Z) \otimes \Z_{(p)}$.  It follows
  immediately that
  \begin{align*}
    \cP^3\widehat\Gamma &= \{\text{elements in $\Z_{(2)}/\Z$ with
      order coprime to $p$}\}
    \\
    &= \{ b/a+\Z \in \Z_{(2)}/\Z \mid (a,b)=1=(a,p)\} = ( \Z_{(2)} \cap
    \Z_{(p)} ) / \Z.
  \end{align*}
  Also $(\Z_{(2)}/\Z)/\cP^3\widehat\Gamma = \Z_{(2)}/
  (\Z_{(2)}\cap\Z_{(p)})$.  One easily verifies that for any odd
  prime $p$ the inclusion of rings $\Z[\frac{1}{p}] \to
  \Z_{(2)}$ gives rise to an isomorphism
  \[
  \Z[\tfrac{1}{p}]/\Z \cong
  \Z_{(2)}/(\Z_{(2)}\cap \Z_{(p)}).
  \]
  Since $\widehat\Gamma/\cP^3\widehat\Gamma$ is a central extension of
  $\cP^2\widehat\Gamma/\cP^3\widehat\Gamma$ by
  $\widehat\Gamma/\cP^2\widehat\Gamma$, we obtain the exact sequence
  stated in~(3).
\end{proof}

\begin{remark}
  \label{remark:order-in-mixed-coefficient-quotients}
  From the above computation, it follows that an element $g\in
  \widehat\Gamma$ has finite order if and only if $g\in \cP^2
  \widehat\Gamma = \Z_{(2)}/\Z$.  If $p$ is odd, the quotient map
  $\cP^2\widehat\Gamma \to \cP^2\widehat\Gamma/\cP^3\widehat\Gamma$ is
  exactly the map eliminating all the $q$-primary factors of
  $\Z_{(2)}/\Z$ for which $(p,q)=1$.  In particular, if $g\in
  \widehat\Gamma$ has order $p^kr$ with $(p,r)=1$, then its image in
  $\widehat\Gamma/\cP^3\widehat\Gamma$ has order~$p^k$.
\end{remark}

\begin{lemma}
  \label{lemma:mixed-coefficient-amenable-in-D(Z/p)}
  The group $\widehat\Gamma/\cP^3\widehat\Gamma$ is amenable and
  in~$D(\Z/p)$.
\end{lemma}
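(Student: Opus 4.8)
The plan is to verify the two assertions separately, using the structural description of $\widehat\Gamma/\cP^3\widehat\Gamma$ from Lemma~\ref{lemma:computation-mixed-coeffcient-series}. For amenability, I would split into the two cases given there. When $p=2$, we have $\widehat\Gamma/\cP^3\widehat\Gamma \cong (\Z_{(2)}^t)^2\rtimes \Z$, which is an extension of $\Z$ (cyclic, hence amenable) by $(\Z_{(2)}^t)^2$ (abelian, hence amenable); since amenability is closed under extensions, the quotient is amenable. When $p$ is odd, Lemma~\ref{lemma:computation-mixed-coeffcient-series}(3) exhibits $\widehat\Gamma/\cP^3\widehat\Gamma$ as a central extension $1\to \Z[\tfrac1p]/\Z \to \widehat\Gamma/\cP^3\widehat\Gamma \to (\Z_{(2)}^t)^2\rtimes \Z \to 1$ with abelian kernel and (by the $p=2$-style argument) amenable quotient, so again closure of amenability under extensions applies. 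Thus $\widehat\Gamma/\cP^3\widehat\Gamma$ is amenable in all cases; it is moreover not finitely generated as a group but this is irrelevant to amenability.

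For membership in Strebel's class $D(\Z/p)$, recall that a group $H$ lies in $D(R)$ if every map between finitely generated projective $RH$-modules which becomes injective after $\otimes_{RH} R$ is itself injective; equivalently, one checks the defining property on the relevant quotients appearing in the derived-series filtration. The key point is that $H = \widehat\Gamma/\cP^3\widehat\Gamma$ is built from poly-(torsion-free-abelian or locally-cyclic-$p$) pieces, and $D(R)$ is closed under extensions (Strebel~\cite{Strebel:1974-1}). So I would filter $H$ by the normal series with successive quotients $\Z$, $(\Z_{(2)}^t)^2$, and (for odd $p$) $\Z[\tfrac1p]/\Z$, and check each quotient lies in $D(\Z/p)$. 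The torsion-free abelian quotients $\Z$ and $\Z_{(2)}^t\cong\Z_{(2)}$ lie in $D(R)$ for every $R$ (torsion-free abelian, indeed locally free abelian, groups are in $D(R)$). The only subtle quotient is $\Z[\tfrac1p]/\Z$, the Pr\"ufer $p$-group $\Z/p^\infty$: here one uses that $\Z/p^\infty$ is a $p$-group that is the direct limit of the cyclic groups $\Z/p^k$, and Strebel showed $\Z/p^k \in D(\Z/p)$ (finite $p$-groups are in $D(\Z/p)$), while $D(\Z/p)$ is closed under directed unions/colimits of subgroups. Assembling these via closure under extension yields $H\in D(\Z/p)$.

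The main obstacle I anticipate is the $D(\Z/p)$-membership of the Pr\"ufer piece $\Z[\tfrac1p]/\Z$ when $p$ is odd, together with justifying the precise closure properties of $D(\Z/p)$ being invoked (closure under extensions and under directed colimits of subgroups). This is exactly the kind of computation that Lemma~6.8 of~\cite{Cha-Orr:2009-01} and the mixed-coefficient-series machinery of~\cite{Cha:2010-01} are designed to streamline, so the cleanest route is to cite those results: the mixed series $\cP=(\Z,\Z,\Z_{(p)})$ was chosen precisely so that the final quotient $\cP^2\widehat\Gamma/\cP^3\widehat\Gamma$ has only $p$-primary torsion (Remark~\ref{remark:order-in-mixed-coefficient-quotients}), which is what forces the group into $D(\Z/p)$ rather than merely into $D(\Z)$ or $D(\Q)$. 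I would therefore organize the write-up as: (i) reduce amenability to closure under extensions; (ii) reduce $D(\Z/p)$-membership to the three successive quotients via closure under extensions; (iii) handle the abelian torsion-free quotients trivially and the Pr\"ufer quotient by a colimit argument from finite $p$-groups, citing Strebel.
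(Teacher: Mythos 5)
Your proposal is correct and follows essentially the same route as the paper: the paper reads off the subnormal series with successive quotients $\Z$, $(\Z_{(2)}^t)^2$, and $\Z[\tfrac1p]/\Z$ (torsion-free abelian and $p$-torsion abelian) from Lemma~\ref{lemma:computation-mixed-coeffcient-series}(3) and then cites Strebel together with \cite[Lemma~6.8]{Cha-Orr:2009-01} to conclude both amenability and $D(\Z/p)$-membership. The only difference is expository: you unfold the closure properties (extensions, directed unions of finite $p$-groups for the Pr\"ufer piece) that the paper compresses into the citation.
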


\begin{proof}
  By the exact sequence in
  Lemma~\ref{lemma:computation-mixed-coeffcient-series}~(3), it
  follows that $\widehat\Gamma/\cP^3\widehat\Gamma$ admits a subnormal
  series with successive quotients either torsion-free abelian or
  $p$-torsion abelian, namely, $\Z$, $(\Z_{(2)}^t)^2$, and
  $\Z[\frac{1}{p}]/\Z$.  By applying Strebel's work [Str74] and known
  properties of amenable groups (see
  \cite[Lemma~6.8]{Cha-Orr:2009-01}), we obtain the desired
  conclusion.
\end{proof}

In this paper we mainly use the case of odd prime~$p$.  We remark that
if $p=2$, then $\widehat\Gamma/\cP^3\widehat\Gamma \cong
\widehat\Gamma/\cP^2\widehat\Gamma$ is PTFA.

\subsection{Homology cobordism invariants from mixed-coefficient
  commutators}
\label{subsection:homology-cobordism-invariant}

Recall that two closed 3-manifolds $M$ and $N$ are (topologically)
\emph{$R$-homology cobordant} by a topological 4-manifold \emph{$R$-homology cobordism} $W$
if $\partial W=M\cup -N$ and 
$$
H_*(W,M;R)=0=H_*(W,N;R).
$$  
We give a special case of a key result in~\cite{Cha-Orr:2009-01}:

\begin{theorem}
[Cha-Orr~\protect{\cite[Special case of Theorem~1.1]{Cha-Orr:2009-01}}]
  \label{theorem:cha-orr-L2-signature}
  Suppose $R$ is a subring of $\Q$ or~$\Z/p$.  Suppose $W$ is an
  $R$-homology cobordism between closed 3-manifolds $M$ and~$N$.
  Suppose $G$ is an amenable group lying in $D(R)$ and $\phi\colon
  \pi_1(M) \to G$ and $\psi\colon \pi_1(N)\to G$ are homomorphisms
  that extend to a common homomorphism $\pi_1(W)\to G$.  Then, the
  $L^2$-Betti numbers $\bt_i(M,\phi)$ and $\bt_i(N,\psi)$ are equal
  for any~$i$, and $L^2$-signature defects $\rhot(M,\phi)$ and
  $\rhot(N,\phi)$ are equal.
\end{theorem}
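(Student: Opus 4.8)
The statement is precisely the case in which the coefficient ring is a subring of $\Q$ or $\Z/p$ of \cite[Theorem~1.1]{Cha-Orr:2009-01}, so the most efficient route is to verify that the hypotheses of that theorem hold here and invoke it. For the reader's benefit I would also outline the underlying argument. The plan is to reduce both conclusions to a single homological fact about $W$ with group von Neumann algebra coefficients. Fix a common extension $\Phi\colon\pi_1(W)\to G$ of $\phi$ and $\psi$. For the signature defect, I would begin from the signature formula
\[
\rhot(\partial W,\Phi) = \lsign(W,\Phi) - \sign(W),
\]
where $\lsign(W,\Phi)$ is the $L^2$-signature of the $\N G$-valued intersection form on $H_2(W;\N G)$ and $\sign(W)$ the ordinary signature; this holds because $(\partial W,\Phi|_{\partial W})$ bounds $(W,\Phi)$. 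Since $\partial W=M\cup -N$ and $\rhot$ is additive under disjoint union, with a sign for the reversed orientation, the left side equals $\rhot(M,\phi)-\rhot(N,\psi)$. Hence it remains to prove $\lsign(W,\Phi)=\sign(W)$, which I would do by showing that both quantities vanish.

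The crux, and the step I expect to be the genuine obstacle, is the promotion
\[
H_*(W,M;R)=0 \ \text{and}\ H_*(W,N;R)=0 \quad\Longrightarrow\quad H_*(W,M;\N G)=0 \ \text{and}\ H_*(W,N;\N G)=0 .
\]
By the definition of an $R$-homology cobordism the left side holds, and the relative cellular chain complex of the $G$-cover of $(W,M)$ is a finite complex of finitely generated free $\Z G$-modules that becomes acyclic after applying $-\otimes_{\Z G}R$ (with $R$ a trivial $\Z G$-module). Passing from here to acyclicity after $-\otimes_{\Z G}\N G$ is exactly where amenability of $G$ and membership in Strebel's class $D(R)$ are used: Strebel's condition controls kernels of $\Z G$-module maps under reduction of coefficients, and the flatness and additivity properties of Lück's $\N G$-dimension for amenable $G$ then force the $\N G$-homology to vanish. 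This implication is the technical core of \cite{Cha-Orr:2009-01} (building on \cite{Strebel:1974-1}); in practice I would simply cite it, since reproving it is the real work.

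Granting the vanishing, the remainder is formal. The inclusions then induce isomorphisms $H_i(M;\N G)\cong H_i(W;\N G)\cong H_i(N;\N G)$, and applying the $\N G$-dimension function yields $\bt_i(M,\phi)=\bt_i(N,\psi)$ for every $i$. For the signatures, $\sign(W)=0$ because an $R$-homology cobordism has identically zero rational intersection form---immediate when $R\subseteq\Q$, and for $R=\Z/p$ because then $H_2(W,M;\Z)/p=0$ forces $H_2(W,M;\Z)$ to be finite, so $H_2(M;\Q)\to H_2(W;\Q)$ is onto and every rational class comes from the boundary---while $\lsign(W,\Phi)=0$ by the standard argument that boundary classes lie in the radical of the intersection form, combined with the fact that, by the vanishing just established, every class in $H_2(W;\N G)$ does come from $\partial W$. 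Therefore $\rhot(M,\phi)-\rhot(N,\psi)=\lsign(W,\Phi)-\sign(W)=0$, which finishes the argument.
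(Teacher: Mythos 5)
The paper does not prove this theorem; it is stated as a direct citation (``Special case of Theorem 1.1'' of \cite{Cha-Orr:2009-01}) and no proof is supplied in this document, so there is no in-paper argument against which to compare. Your decision to cite and then sketch the underlying mechanism is exactly the right move, and the sketch itself is sound: the reduction to $\lsign(W,\Phi)=\sign(W)=0$ via the bounding-manifold signature formula, the observation that boundary classes lie in the radical once $H_2(\partial W;\N G)\to H_2(W;\N G)$ is onto, the universal-coefficients argument that $\Z/p$-acyclicity of $(W,M)$ already kills the rational intersection form, and the correct identification of the $R$-to-$\N G$ acyclicity promotion (via amenability and Strebel's $D(R)$ condition) as the genuinely hard step that belongs to \cite{Cha-Orr:2009-01} rather than to a blind rederivation. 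The only caution worth flagging is that the sentence claiming $\lsign(W,\Phi)=\sign(W)$ ``by showing that both quantities vanish'' could be read as implying this is generic for cobordisms; it holds here precisely because $W$ is an $R$-homology cobordism, which you do make explicit in the following sentences, so the logic as written is fine.
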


For the definitions of $\bt_i(M,\phi)$ and $\rhot(M,\phi)$, see, e.g.,
\cite[Section~7]{Cha-Orr:2009-01}.

Combining the $\Z/p$ version of Theorem~\ref{theorem:cha-orr-L2-signature} with our computation, we obtain the following result:

\begin{theorem}
  \label{theorem:homology-cobordism-invariant-over-gamma-hat}
  For a closed 3-manifold $M$ satisfying $\widehat{\pi_1(M)}\cong
  \widehat\Gamma$, let $\phi_M$ be the composition
  \[
  \phi_M\colon \pi_1(M)\to \widehat{\pi_1(M)} \cong \widehat\Gamma \to
  \widehat\Gamma / \cP^3 \widehat\Gamma
  \]
  where $\cP=(\Z,\Z,\Z_{(p)})$.  Then the $i$-th Betti number
  $\bt_i(M,\phi_M)$ and $L^2$-signature defect $\rhot(M,\phi_M)$ are
  homology cobordism invariants.  That is, if another 3-manifold $N$
  is homology cobordant to $M$, then $\widehat{\pi_1(N)}\cong
  \widehat\Gamma$, and we have $\bt_i(M,\phi_M) = \bt_i(N,\phi_N)$,
  and $\rhot(M,\phi_M) = \rhot(N,\phi_N)$.
\end{theorem}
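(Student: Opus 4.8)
The plan is to apply the $\Z/p$-coefficient case of Theorem~\ref{theorem:cha-orr-L2-signature} to a homology cobordism $W$ between $M$ and $N$, taking as target group $G=\widehat\Gamma/\cP^3\widehat\Gamma$, which is amenable and lies in $D(\Z/p)$ by Lemma~\ref{lemma:mixed-coefficient-amenable-in-D(Z/p)}. Two things must be arranged: first, that $W$ is a $\Z/p$-homology cobordism; second, that $\phi_M$ and $\phi_N$ extend to a common homomorphism $\pi_1(W)\to G$. The first is immediate, since $H_*(W,M;\Z)=0=H_*(W,N;\Z)$ implies $H_*(W,M;\Z/p)=0=H_*(W,N;\Z/p)$ by the universal coefficient theorem.

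For the second point I would argue as follows. The inclusions $M\hookrightarrow W$ and $N\hookrightarrow W$ are integral homology equivalences, hence induce $2$-connected homomorphisms on fundamental groups and therefore isomorphisms $\widehat{\pi_1(M)}\cong\widehat{\pi_1(W)}\cong\widehat{\pi_1(N)}$ by functoriality of homology localization. In particular $\widehat{\pi_1(N)}\cong\widehat{\pi_1(M)}\cong\widehat\Gamma$, which is the first assertion of the theorem. Next, fix the isomorphism $\widehat{\pi_1(W)}\cong\widehat\Gamma$ obtained by composing $\widehat{\pi_1(W)}\cong\widehat{\pi_1(M)}$ with the isomorphism $\widehat{\pi_1(M)}\cong\widehat\Gamma$ from the hypothesis, and let $\phi_W\colon\pi_1(W)\to\widehat{\pi_1(W)}\cong\widehat\Gamma\to G$ be the resulting composition. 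By naturality of the localization maps $p_{(-)}$, the homomorphism $\phi_W$ restricts to $\phi_M$ along $\pi_1(M)\to\pi_1(W)$. Its restriction along $\pi_1(N)\to\pi_1(W)$ is a homomorphism of exactly the form used to define $\phi_N$, but built from a possibly different isomorphism $\widehat{\pi_1(N)}\cong\widehat\Gamma$; since $\cP^3\widehat\Gamma$ is a characteristic subgroup of $\widehat\Gamma$, any two such isomorphisms induce quotient homomorphisms to $G$ that differ only by an automorphism of $G$. Because $\bt_i(N,-)$ and $\rhot(N,-)$ are unaffected by post-composition with a group automorphism (which induces a trace-preserving isomorphism of group von Neumann algebras), we may assume without loss of generality that $\phi_W$ restricts to $\phi_N$ as well. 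The same observation shows that $\bt_i(N,\phi_N)$ and $\rhot(N,\phi_N)$ do not depend on the auxiliary choice of isomorphism $\widehat{\pi_1(N)}\cong\widehat\Gamma$, so the invariants are intrinsically attached to $N$.

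With these preparations in place, Theorem~\ref{theorem:cha-orr-L2-signature} applied with $R=\Z/p$, $G=\widehat\Gamma/\cP^3\widehat\Gamma$, and the homomorphisms $\phi_M$, $\phi_N$, $\phi_W$ yields $\bt_i(M,\phi_M)=\bt_i(N,\phi_N)$ and $\rhot(M,\phi_M)=\rhot(N,\phi_N)$, as required.

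I expect the only genuinely delicate point to be the bookkeeping in the second paragraph: one must carry the non-canonical identification $\widehat{\pi_1(-)}\cong\widehat\Gamma$ through the cobordism and use that $\cP^3$ is characteristic to see that the resulting ambiguity is absorbed into an automorphism of $G$, which leaves the $L^2$-Betti numbers and the $L^2$-signature defect unchanged. Everything else is a direct appeal to results already established above — the functoriality and naturality of the Vogel--Levine localization, Lemma~\ref{lemma:mixed-coefficient-amenable-in-D(Z/p)}, and Theorem~\ref{theorem:cha-orr-L2-signature}.
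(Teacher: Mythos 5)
Your proof is correct and follows essentially the same route as the paper: apply the $\Z/p$-coefficient version of Theorem~\ref{theorem:cha-orr-L2-signature} to a homology cobordism $W$, using Lemma~\ref{lemma:mixed-coefficient-amenable-in-D(Z/p)} for the amenability and $D(\Z/p)$ hypotheses, and the localization isomorphisms $\widehat{\pi_1(M)}\cong\widehat{\pi_1(W)}\cong\widehat{\pi_1(N)}$ to produce a common extension of $\phi_M$ and $\phi_N$ over $\pi_1(W)$. The one place you go beyond the paper is in tracking the non-canonical identification $\widehat{\pi_1(N)}\cong\widehat\Gamma$ and observing that, since $\cP^3\widehat\Gamma$ is characteristic, the resulting ambiguity is an automorphism of the target group, under which $\bt_i$ and $\rhot$ are invariant; the paper leaves this compatibility of identifications implicit, so your version is a slight tightening of the same argument rather than a different one.
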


\begin{proof}
  Suppose $M$ and $N$ are as in the statement of the theorem, and $W$
  is a homology cobordism between $M$ and~$N$.  Then, since
  $\pi_1(M)\to \pi_1(W)$ and $\pi_1(N) \to \pi_1(W)$ are integral
  homology 2-connected, we have $\widehat\Gamma \cong
  \widehat{\pi_1(M)}\cong \widehat{\pi_1(W)} \cong
  \widehat{\pi_1(N)}$.  Also it follows that $\phi_M$ and $\phi_N$ are
  restrictions of the composition
  \[
  \pi_1(W)\to \widehat{\pi_1(W)} \cong \widehat\Gamma \to
  \widehat\Gamma / \cP^3 \widehat\Gamma.
  \]
  Since $W$ is also a $(\Z/p)$-homology cobordism and $\widehat\Gamma
  / \cP^3 \widehat\Gamma$ is in $D(\Z/p)$ by
  Lemma~\ref{lemma:mixed-coefficient-amenable-in-D(Z/p)}, the
  $(\Z/p)$-version of Theorem~\ref{theorem:cha-orr-L2-signature}
  implies
  \begin{equation*}
    \bt_i(M,\phi_M) = \bt_i(N,\phi_N) \quad\mbox{and}\quad
    \rhot_i(M,\phi_M) = \rhot_i(N,\phi_N).
    \qedhere
  \end{equation*}

\end{proof}

\section{Group localization and homology cobordism of 3-manifolds}
\label{section:realization-elts-in-3mfd-groups}

In this section, $R$ is always a subring of~$\Q$.

\begin{theorem}
  \label{theorem:construction-of-3-manifold}
  Suppose $M_0$ is a closed 3-manifold.  If $\alpha\colon\pi_1(M_0) \to
  G$ is a homomorphism into a finitely presented group $G$ which is
  $R$-homology 2-connected, then there is an $R$-homology cobordism
  $W$ between $M_0$ and a closed hyperbolic 3-manifold $M$ satisfying
  the following:
  \begin{enumerate}
  \item $\alpha$ extends to a surjection $\pi_1(W)\to G$.
  \item The inclusion induces a surjection $\pi_1(M) \to \pi_1(W)$.
  \end{enumerate}
  Consequently, the induced map $\pi_1(M) \to G$ is a surjection which
  is $R$-homology 2-connected.
\end{theorem}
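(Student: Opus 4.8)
The plan is to build the cobordism $W$ in two stages: first pass from $M_0$ to an intermediate manifold by a sequence of surgeries realizing the homomorphism $\alpha$ algebraically, then perform a final "hyperbolization" modification to arrange that the closed end is hyperbolic while controlling the fundamental group. The starting point is the handle-theoretic construction already carried out in the proof of Lemma~\ref{lemma:homology-cobordism-with-given-fundamental-group}: attach $1$- and $2$-handles to $M_0\times[0,1]$ to produce a cobordism $V$ whose upper fundamental group surjects onto $G$ via an extension of $\alpha$, and which is an $R$-homology cobordism because the attaching data lies in the commutator subgroup. Here I must be careful that the argument works with $R$-coefficients rather than $\Z$, but since $\alpha$ is assumed $R$-homology $2$-connected the homological bookkeeping (the $h_i$ lie in the commutator subgroup, hence become trivial in $H_1(-;R)$) goes through verbatim. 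In dimension $3$ the upper boundary $M'$ of $V$ is obtained from $M_0$ by surgery on a framed link, so $\pi_1(M')\to\pi_1(V)$ is already surjective, giving property~(2) for this intermediate $W=V$; property~(1) is built in.

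The second stage is to replace $M'$ by a hyperbolic manifold $M$ without disturbing these properties. The idea is to embed a suitable hyperbolic piece: by the standard technique (used throughout the literature on this kind of realization, going back to knot-theoretic satellite constructions), choose a null-homotopic simple closed curve in $M'$ bounding an embedded disk, remove a tubular neighborhood, and glue in the exterior of a hyperbolic knot $J\subset S^3$ along the $0$-framing, so that the resulting manifold $M$ is $R$-homology cobordant to $M'$ (trace the cobordism given by the knot exterior crossed with an interval, or equivalently by attaching a $2$-handle). Because $J$ is chosen null-homotopic and we graft along a homology circle, $\pi_1(M)$ still surjects onto $\pi_1(M')$, hence onto $\pi_1(W')$ for the extended cobordism $W'$, and $\alpha$ still extends over $W'$. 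To force $M$ itself to be hyperbolic, one does this grafting along enough curves and invokes Thurston's geometrization for Haken manifolds together with an argument that the grafted pieces make $M$ atoroidal and irreducible; alternatively one can appeal to the Myers/CJ-type results guaranteeing that any $3$-manifold is $\Z$-homology cobordant to a hyperbolic one via a cobordism with surjective $\pi_1$, and then splice that in as the last step.

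The main obstacle is the interplay between these two requirements at the final end: making $M$ \emph{hyperbolic} typically requires adding topology (tori, knot exteriors), while keeping $\pi_1(M)\twoheadrightarrow\pi_1(W)$ and keeping the cobordism an \emph{$R$-homology} cobordism constrains what can be added. The resolution is to ensure every inserted piece is glued along a curve that is null-homotopic in the ambient manifold and along a $\Z$-homology (indeed $R$-homology) $S^1\times D^2$, so that neither $\pi_1$ nor $H_*(-;R)$ of the cobordism changes while the closed end acquires hyperbolic structure; surjectivity on $\pi_1$ of the closed end over the cobordism is automatic because in every stage the closed $3$-manifold is obtained from the previous one by surgery or grafting, operations that never kill $\pi_1$-surjectivity onto the trace. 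The final sentence of the statement is then immediate: composing $\pi_1(M)\twoheadrightarrow\pi_1(W)\twoheadrightarrow G$ gives a surjection, and it is $R$-homology $2$-connected because $\pi_1(M)\to\pi_1(W)$ is (an inclusion into an $R$-homology cobordism, hence $R$-homology $2$-connected) and $\pi_1(W)\to G$ is by construction.
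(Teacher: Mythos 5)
Your two-stage plan (handle attachments realizing $\alpha$, then hyperbolize the closed end) matches the paper's overall structure, but there are two genuine gaps in the details.

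\textbf{Stage 1 does not go through ``verbatim.''}  You base the first stage on the proof of Lemma~\ref{lemma:homology-cobordism-with-given-fundamental-group}, where the 2-handles are attached along curves of the form $g_i^{-1}\gamma_i h_i$; this requires writing each generator $g_i$ of $G$ as $\alpha(\gamma_i)$ times a commutator, i.e.\ requires $[g_i]$ to lie in the image of $H_1(\pi_1(M_0);\Z)\to H_1(G;\Z)$.  But the hypothesis is only that $\alpha$ is $R$-homology 2-connected: $\alpha$ induces an isomorphism on $H_1(-;R)$, not on $H_1(-;\Z)$.  In general one can only write $g_i^e = \alpha(\gamma_i)h_i$ for some $e\in d(R)$ (a denominator invertible in $R$), not $g_i$ itself.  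This is precisely why the paper does not reuse Lemma~\ref{lemma:homology-cobordism-with-given-fundamental-group} but instead introduces the system of $R$-nullhomologous equations $\{x_i^e = w_i\}$ and attaches 2-handles along curves representing $x_i^{-e}w_i$ (Lemma~\ref{lemma:extension-by-equation}).  The resulting relative chain complex has boundary a diagonal matrix with entries $-e$, which is invertible over $R$ but not over $\Z$, so the construction yields an $R$-homology cobordism but in general not an integral one.  Your assertion that ``the homological bookkeeping goes through verbatim'' glosses over exactly this issue.

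\textbf{Stage 2's primary route fails.}  Grafting the exterior of a hyperbolic knot $J\subset S^3$ along a null-homotopic curve (bounding a disk) in $M'$ does not produce a hyperbolic manifold: the gluing torus $\partial E_J$ becomes an incompressible torus in the result (or, when the curve bounds a disk, one produces an essential sphere), so the manifold is either toroidal or reducible and in either case not hyperbolic by Thurston.  In fact this satellite/infection operation along \emph{essential} curves is used elsewhere in the paper precisely because the result is usually \emph{not} hyperbolic, which is why the paper must invoke Ruberman's theorem (Theorem~\ref{theorem:H-cob-to-hyperbolic-3mfd}) afterwards to replace $M(\alpha,K)$ by a hyperbolic manifold.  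Your ``alternatively\ldots Myers/Ruberman'' clause is in fact the only working route, and it is what the paper uses: Ruberman gives a homology cobordism $U$ from $N$ to a hyperbolic $M$ together with a retraction $r\colon U\to N$ whose restriction $M\to N$ has degree one, and the degree-one property supplies the surjection on $\pi_1$ needed for conclusion~(2).  You should commit to that argument rather than present it as a fallback.

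Once those two points are repaired, your final deduction (the composite $\pi_1(M)\twoheadrightarrow\pi_1(W)\twoheadrightarrow G$ is surjective and $R$-homology 2-connected) is correct and agrees with the paper.
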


It is known that for any finitely presented group $G$, the
$R$-homology localization $\widehat G$ is the direct limit of a
sequence of homomorphisms on finitely presented groups
\[
G=G_0 \to G_1 \to \cdots \to G_n \to \cdots
\]
which are $R$-homology 2-connected \cite[Theorem~2.6]{Cha:2004-1}.
From this we obtain the following consequence of
Theorem~\ref{theorem:construction-of-3-manifold}:

\begin{corollary}
  \label{corollary:construction-of-3-manifold}
  Suppose $M_0$ is a closed 3-manifold with $G=\pi_1(M_0)$.  For any
  finitely generated subgroup $H$ in the $R$-homology localization
  $\widehat G$, there is a closed hyperbolic 3-manifold $M$ satisfying
  the following:
  \begin{enumerate}
  \item $M$ is $R$-homology cobordant to~$M_0$.
  \item There is a 2-connected homomorphism $f\colon \pi_1(M) \to G$
    such that
    \[
    H \subset \Im \{\pi_1(M) \to \widehat{\pi_1(M)}
    \xrightarrow[\cong]{f} \widehat G\}.
    \]
 \end{enumerate}
\end{corollary}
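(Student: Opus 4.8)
The plan is to derive the corollary from Theorem~\ref{theorem:construction-of-3-manifold} by feeding it an appropriate stage of the directed system that computes $\widehat G$. Recall (as quoted just above, from \cite[Theorem~2.6]{Cha:2004-1}) that $\widehat G=\varinjlim_n G_n$ for a sequence $G=G_0\to G_1\to\cdots$ of $R$-homology 2-connected homomorphisms of finitely presented groups. Since $H$ is finitely generated, I would pick generators $h_1,\dots,h_m$ of $H$; each, viewed in the colimit, is the image of an element of some $G_{n_j}$, so for $n:=\max_j n_j$ all of them lie in the image of the canonical map $\iota_n\colon G_n\to\widehat G$, and hence $H\subseteq\iota_n(G_n)$.

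Next I would apply Theorem~\ref{theorem:construction-of-3-manifold} to the composite homomorphism $\alpha\colon\pi_1(M_0)=G=G_0\to G_n$. This $\alpha$ is an $R$-homology 2-connected homomorphism into a finitely presented group (a finite composite of such maps is again such, since $H_1(-;R)$-isomorphisms compose and $H_2(-;R)$-surjections compose), so the hypotheses of the theorem are met. It produces an $R$-homology cobordism $W$ from $M_0$ to a closed hyperbolic 3-manifold $M$, a surjection $\pi_1(W)\twoheadrightarrow G_n$ extending $\alpha$, and a surjection $\pi_1(M)\twoheadrightarrow\pi_1(W)$, whence the composite $f\colon\pi_1(M)\to G_n$ is a surjective, $R$-homology 2-connected homomorphism. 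This already gives item (1). Moreover $f$ induces an isomorphism $\widehat{\pi_1(M)}\xrightarrow{\cong}\widehat{G_n}$, which I would compose with the canonical isomorphism $\widehat{G_n}\cong\widehat G$ coming from the $R$-homology 2-connected map $G\to G_n$ (functoriality of localization together with property~(3) of Section~\ref{subsection:localization}).

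For item (2), naturality of the localization maps gives a commutative square, so the composite $\pi_1(M)\xrightarrow{f}G_n\xrightarrow{\iota_n}\widehat G$ agrees with $\pi_1(M)\to\widehat{\pi_1(M)}\xrightarrow{\cong}\widehat G$. Since $f$ is onto, the image of this composite is $\iota_n(G_n)$, which contains $H$ by the choice of $n$. That is exactly the assertion of (2). (The only cosmetic discrepancy with the statement is that $f$ lands in $G_n$ rather than in $G=\pi_1(M_0)$; this is immaterial, the target merely carrying the identification $\widehat{G_n}\cong\widehat G$ --- and indeed an arbitrary finitely generated $H\subseteq\widehat G$ cannot in general be hit from $G$ itself, which is precisely why one passes to $G_n$.)

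I do not expect a genuine obstacle: every substantial ingredient is external --- the directed-system presentation of $\widehat G$ and the geometric realization Theorem~\ref{theorem:construction-of-3-manifold}. The one point deserving care is bookkeeping of the several localization isomorphisms $\widehat{\pi_1(M)}\cong\widehat{\pi_1(W)}\cong\widehat{G_n}\cong\widehat G$, all induced by $R$-homology 2-connected maps and hence canonical; one should check these identifications are mutually compatible, so that ``the subgroup $H$'' is unambiguous and the inclusion in (2) is literally the one displayed.
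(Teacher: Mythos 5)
Your proof is correct and fills in exactly the argument the paper leaves implicit, deriving the corollary from Theorem~\ref{theorem:construction-of-3-manifold} applied to a finite stage $G_n$ of the directed system computing~$\widehat G$, chosen so that the canonical map $G_n\to\widehat G$ hits all of~$H$. Your parenthetical comment on the target of $f$ correctly identifies that the 2-connected homomorphism necessarily lands in $G_n$ rather than in $G=\pi_1(M_0)$ itself, with the canonical identification $\widehat{G_n}\cong\widehat G$ supplying the displayed isomorphism; this is how the statement must be read, since in general a finitely generated $H\subset\widehat G$ need not lie in $\operatorname{Im}(G\to\widehat G)$, so passage to $G_n$ is essential rather than cosmetic.
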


\subsection{Algebraic equations over groups}

To prove Theorem~\ref{theorem:construction-of-3-manifold}, we will
construct a homology cobordism of $M_0$ by attaching 1- and 2-handles
according to a homological description encoded through certain equations over the group $G$.
For this purpose, we first recall a definition from
\cite[Definition~4.1]{Cha:2004-1}.  We denote by $d(R)$ the set of
denominators of reduced fractional expressions of elements in~$R$.

\begin{definition}
  A \emph{system of $R$-nullhomologous equations} over a group $\pi$ in
  $n$ variables $x_1,\ldots,x_n$ is a collection $S$ of $n$
  expressions $x_i^e=w_i(x_1,\ldots,x_n)$, $i=1,\ldots,n$, where $e\in
  d(R)$ and each $w_i=w_i(x_1,\ldots,x_n)$ is an element in the free
  product of $G$ and the free group $F$ generated by the $x_i$ which
  lies in the kernel of the projection $G*F \to F \to H_1(F)=F/[F,F]$.
\end{definition}

Levine first defined his group closure in~\cite{Levine:1989-1,Levine:1989-2}, to extend link invariants in~\cite{Orr:1989-1}.  Levine's group closure gave an alternative description of the group localization of Vogel, and was inspired, in part, from a similar notion of group closure defined by Guti\'errez in~\cite{Gutierrez:1979-1}.  Shortly thereafter, Farjoun and Shelah rediscovered an analogous construction to describe Bousfield homology localization of groups~\cite{Farjoun-Orr-Shelah:1989}.  Variations, for instance, with untwisted and twisted coefficient systems, appear in various sources, for instance,~\cite{Cha:2004-1} and~\cite{Heck:2009-1}.

For a system of nullhomologous equations
$S=\{x_i^e=w_i(x_1,\ldots,x_n)\}$ over $\pi$, we associate a group
$\pi_S$ defined as follows:
\[
\pi_S = \langle \pi, z_1,\ldots,z_n \mid z_i^e = w_i(z_1,\ldots,z_n),\
i=1,\ldots,n \rangle
\]
This group $\pi_S$ is endowed with a natural map $\pi \to \pi_S$ and
can be viewed as obtained from $\pi$ by adjoining a solution $\{z_i\}$
of the system~$S$.  We need the following fact:

\begin{lemma}
  \label{lemma:extension-by-equation}
  \leavevmode\Nopagebreak
  \begin{enumerate}
  \item For any $\pi$ and any system of $R$-nullhomologous equations
    $S$ over $\pi$, the map $\pi\to \pi_S$ is $R$-homology
    2-connected.
  \item If $\alpha\colon\pi\to G$ is a homomorphism which is
    $R$-homology 2-connected and $G$ is finitely generated, then
    there is a system of $R$-nullhomologous equations $S$ over $\pi$
   and a surjection $\alpha_S\colon \pi_S \to G$
    making the following diagram commute:
    \[
    \xymatrix{
      \pi \ar[r]\ar[d]_{\alpha}
      & \pi_S \ar[dl]^{\alpha_S}
      \\
      G
    }
    \]
  \end{enumerate}
\end{lemma}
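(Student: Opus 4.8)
For part~(1), the plan is to analyze $\pi_S$ via its presentation relative to $\pi$. Build a relative CW-structure on a $K(\pi_S,1)$ starting from $K(\pi,1)$ by attaching one $2$-cell for each variable $z_i$ (along the loop $z_i^e w_i^{-1}$) and then killing higher homotopy with cells of dimension $\ge 3$; equivalently, work with the mapping cone of the map of $K(\pi,1)$ to $K(\pi_S,1)$. Then $H_*(K(\pi_S,1), K(\pi,1); R)$ is computed by a chain complex that, through degree~$2$, looks like $R^n \xrightarrow{\partial_2} R^n$, where the $i$-th basis element in degree~$2$ (the $z_i$-cell) maps to the class of $z_i^e w_i^{-1}$ in $H_1$ of the relative skeleton. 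Abelianizing, each relation reads $e\cdot \bar z_i - \overline{w_i} = 0$, and since $w_i$ lies in the kernel of $G*F \to H_1(F)$, the term $\overline{w_i}$ contributes nothing to the $z_i$-coordinates; thus $\partial_2$ is represented, in the $R^n$ spanned by the new generators, by a matrix of the form $eI - (\text{strictly from }\pi\text{-part})$. Over $R$, the scalar $e \in d(R)$ is a unit, so $\partial_2$ is an isomorphism onto the relevant summand. It follows that $H_1(\pi_S;R)=H_1(\pi;R)$ (the new generators are killed, modulo the image of $\pi$) and $H_2(\pi;R) \to H_2(\pi_S;R)$ is surjective, i.e.\ $\pi \to \pi_S$ is $R$-homology $2$-connected. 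The main obstacle here is bookkeeping: carefully identifying the relative chain complex and checking that the ``off-diagonal'' contributions to $\partial_2$ coming from the $\pi$-letters in the $w_i$ do not interfere with invertibility; the hypothesis $e \in d(R)$ and the nullhomologous condition on $w_i$ are exactly what make this work.

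For part~(2), the plan is to reverse-engineer a system $S$ from the data of $\alpha$. Since $G$ is finitely generated, fix generators $g_1,\dots,g_n$ and lift each to a word; more precisely, regard $\pi_S$ as built by adjoining symbols $z_1,\dots,z_n$ intended to map to the $g_i$. Because $\alpha$ is $R$-homology $2$-connected, $H_1(\pi;R) \to H_1(G;R)$ is an isomorphism, so for each generator $g_i$ there is an element $\gamma_i \in \pi$ with $\alpha(\gamma_i)$ and $g_i$ having the same image in $H_1(G;R)$; after clearing denominators by a common $e \in d(R)$, we get $g_i^{e} = \alpha(\gamma_i)^{e}\cdot c_i$ in $G$ with $c_i$ lying in the kernel of $G \to H_1(G;R)$, hence expressible as a word $w_i(z_1,\dots,z_n)$ in the $z_j$ and elements of $\pi$ that is nullhomologous in the $z_j$-variables. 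This defines the equations $z_i^{e} = w_i(z_1,\dots,z_n)$ (with the $\gamma_i^e$ absorbed into the $\pi$-part of $w_i$), hence a system $S$, and the assignment $z_i \mapsto g_i$ together with $\alpha$ on $\pi$ respects all relations, giving $\alpha_S\colon \pi_S \to G$ with $\alpha_S$ composed with $\pi \to \pi_S$ equal to $\alpha$. Surjectivity of $\alpha_S$ is immediate since the $g_i$ generate $G$ and lie in the image.

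The genuinely delicate point in part~(2) is producing the words $w_i$ so that they are literally $R$-nullhomologous in the sense of the definition — that is, lying in the kernel of $G * F \to F \to H_1(F)$ — rather than merely nullhomologous after applying $\alpha$. This requires expressing the ``correction terms'' $c_i \in G$ as words in the $z_j$'s whose total exponent sum in each $z_j$ is zero, which one arranges by first writing $c_i$ as a product of commutators and conjugates of relators of $G$ (possible because $c_i$ dies in $H_1(G;R)$ and, after a further clearing of denominators, even in $H_1(G;\Z)$ up to $d(R)$-torsion), then substituting the symbols $z_j$ for $g_j$; commutators and conjugates of relators are automatically nullhomologous in the free group on the $z_j$. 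I would also double-check the edge case where $H_1(G;\Z)$ has torsion coprime handling is needed, but the $e \in d(R)$ slack in the definition of a nullhomologous system is designed precisely to absorb this. Granting part~(1), the composite $\pi \to \pi_S \xrightarrow{\alpha_S} G$ being $\alpha$ and $\pi \to \pi_S$ being $R$-homology $2$-connected are then automatic, completing the proof.
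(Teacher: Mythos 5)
The paper does not supply its own proof of this lemma; it refers to \cite[p.~245, proof of Theorem~5.2]{Cha:2004-1} (with \cite{Bousfield:1977} as a related source for part~(2)). Your argument is nonetheless the standard one for results of this kind and is substantively correct in both parts, so there is no genuine conflict of approach to report — just a few small imprecisions.

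In part~(1), you describe $\partial_2$ as being of the form ``$eI$ minus a $\pi$-part,'' but in the \emph{relative} chain complex $C_*(K(\pi_S,1),K(\pi,1);R)$ all $\pi$-generators lie in the subcomplex and so contribute nothing to $C_1$. The boundary matrix on the $R^n$ spanned by the new $1$-cells is therefore literally $\pm eI$; there is no off-diagonal contribution to ``interfere.'' This only strengthens your conclusion, but the phrasing suggests a computation you did not actually need. In part~(2), the opening claim that one can \emph{directly} find $\gamma_i\in\pi$ with $\alpha(\gamma_i)$ and $g_i$ having the same image in $H_1(G;R)$ needs more care: surjectivity of $H_1(\pi;R)\to H_1(G;R)$ only yields $[g_i]$ as an $R$-linear combination of classes $\alpha_*[\gamma_{ij}]$; one must clear denominators to obtain a single $\gamma_i$ with $e[g_i]=\alpha_*[\gamma_i]$ in $H_1(G;R)$, and then clear once more (using that $d(R)$ absorbs the torsion discrepancy) so that the identity holds integrally, giving $g_i^{e}\alpha(\gamma_i)^{-1}\in[G,G]$. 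You acknowledge the two-stage clearing at the end, but the way $\gamma_i$ is introduced makes it read as a single step. Finally, the appeal to ``conjugates of relators of $G$'' is both unnecessary and unavailable here — $G$ is only assumed finitely generated, not finitely presented. What you actually need is simpler: once $g_i^{e}\alpha(\gamma_i)^{-1}\in[G,G]$, it is a product of commutators $[a_k,b_k]$ with $a_k,b_k\in G$; writing each $a_k,b_k$ as a word in the $g_j$ and substituting $z_j$ for $g_j$ already produces a word of zero exponent sum in every $z_j$. No relators enter, and with a common $e$ (take an lcm over $i$) this yields the desired $R$-nullhomologous system $S$ and the surjection $\alpha_S$.
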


For a proof of Lemma~\ref{lemma:extension-by-equation}, see
\cite[p.~245, proof of Theorem~5.2]{Cha:2004-1}.  (See also \cite{Bousfield:1977}, Corollary~2.17, for a related result to part (2) above.)

\subsection{Construction of an homology cobordism using equations}

We construct a homology cobordism
from $R$-null\-homologous equations.

\begin{proof}[Proof of Theorem~\ref{theorem:construction-of-3-manifold}]
  Suppose $M_0$ is a closed 3-manifold with $\pi=\pi_1(M_0)$, and
  $\alpha\colon \pi \to G$ is $R$-homology 2-connected.
  By
  Lemma~\ref{lemma:extension-by-equation}, we obtain a system of
  $R$-nullhomologous equations $S=\{x_i^e = w_i\}_{i=1,\ldots,n}$ over
  $\pi$ such that the given $\alpha$ extends to a surjection $\pi_S\to
  G$.  We start with $M_0\times[0,1]$.  Let $V_1$ be the cobordism
  from $M_0$ to another 3-manifold, say $M_1$, which is obtained by
  attaching $n$ 1-handles to $M_0\times[0,1]$, one for each
  variable~$x_i$.  Identify $\pi_1(V_1)$ with the free product of
  $\pi$ and the free group generated by the~$x_i$.  Since $M_1$ has
  dimension 3, we can choose a collection of disjoint simple closed
  curves $\gamma_i$ on $M_1$ which represent the elements $x_i^{-e}w_i
  \in \pi_1(V_1)$.  By attaching $n$ 2-handles along the $\gamma_i$,
  we obtain a cobordism, say $V$, between $M_0$ and a new 3-manifold,
  say~$N$.

  We will verify the following properties of $(V,M_0,N)$ constructed
  above:
  \begin{enumerate}
  \item[(0)] $V$ is an $R$-homology cobordism between $M_0$ and~$N$.
  \item[(1)] The given $\alpha\colon \pi_1(M_0) \to G$ extends to a
    surjection $\pi_1(V) \to G$.
  \item[(2)] The inclusion induces a surjection $\pi_1(N) \to
    \pi_1(V)$.
  \end{enumerate}
  \[
  \xymatrix{
    \pi_1(M_0) \ar[r] \ar[dr]_{\alpha}
    & \pi_1(V) \ar@{..>>}[d]
    & \pi_1(N) \ar@{->>}[l]
    \\
    & G
  }
  \]

  First note that $H_*(V,M_0;R)$ can be computed from the handlebody
  structure of~$V$: the cellular chain complex vanishes in dimensions
  other than 1 and 2, and the image of the $i$th 2-handle under the
  boundary map $\partial_1$ is determined by the word $x_i^{-e}w_i$.
  Since the equations are nullhomologous, it follows that $\partial_1$
  is represented by an $n\times n$ diagonal matrix with all diagonals
  $-e$.  Since $e$ is a unit in $R$, it follows that $H_*(V,M_0;R)=0$.
  By duality, $H_*(V,N;R)=0$.  That is, $V$ is a homology cobordism
  between $M_0$ and~$N$.

  From the construction, $\pi_1(V)$ can be identified with $\pi_S$.
  By our choice of the equations (i.e., by
  Lemma~\ref{lemma:extension-by-equation}), $\alpha$ extends to a
  surjection $\pi_1(V)=\pi_S \to G$.  

  Reversing the handle decomposition, $V$ is obtained from $N$ by
  attaching 2- and 3-handles, and therefore $\pi_1(N)\to \pi_1(V)$ is
  surjective.  This completes the verification of the above properties
  (0), (1),~(2).

  Note that our $N$ is not necessarilly hyperbolic.  To obtain a
  hyperbolic 3-manifold satisfying the same properties, we need the
  following result of Ruberman:

  \begin{theorem}[Ruberman~\protect{\cite[Theorem~2.6]{Ruberman:1990-1}}]
    \label{theorem:H-cob-to-hyperbolic-3mfd}
    For any closed 3-manifold $N$, there is a homology cobordism $U$
    between $N$ and a closed hyperbolic 3-manifold $M$ and a
    retraction $r\colon U\to N$ such that the composition $M
    \hookrightarrow U \xrightarrow{r} N$ is a degree one map.
  \end{theorem}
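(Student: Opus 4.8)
The plan is to isolate the soft homotopy-theoretic content of the statement from the hard geometric input. First note that the ``degree one'' conclusion is automatic once one has a homology cobordism $U$ with $\partial U=N\sqcup(-M)$ together with a retraction $r\colon U\to N$ of the inclusion $N=N\times\{0\}\hookrightarrow U$: in the long exact sequence of $(U,\partial U)$ one has $H_4(U)=0=H_4(\partial U)$, so the fundamental class maps injectively, $[U,\partial U]\mapsto[\partial U]=[N]-[M]\in H_3(\partial U)$, and by exactness $[\partial U]$ dies in $H_3(U)$; hence $i^N_*[N]=i^M_*[M]$ in $H_3(U)$, and applying $r_*$ (which fixes $[N]$, $r$ being a retraction) gives $(r|_M)_*[M]=[N]$, i.e.\ $r|_M\colon M\to N$ has degree one. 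So it suffices to produce a closed hyperbolic $3$-manifold $M$, a homology cobordism $U$ from $N$ to $M$, and a retraction $U\to N$.

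To obtain such a $U$ I would build it from $N\times[0,1]$ by attaching, for a suitably large $k$, a family of $k$ $1$-handles and $k$ $2$-handles in ``algebraically cancelling'' position. After the $1$-handles the top slice becomes $N'$, the connected sum of $N$ with $k$ copies of $S^1\times S^2$, and the $i$-th $2$-handle is glued along a framed simple closed curve $\gamma_i\subset N'$ crossing the $i$-th $S^1\times S^2$ summand algebraically once and the $j$-th algebraically zero times for $j>i$. Then the relative cellular chain complex of $(U,N)$ is $0\to\Z^k\xrightarrow{\partial}\Z^k\to0$ with $\partial$ triangular and $\pm1$ on the diagonal, so $H_*(U,N;\Z)=0$; the same chain computation over $\Z[\pi_1U]$ gives $H_*(U,N;\Z[\pi_1U])=0$, and Tietze elimination of the $1$-handle generators shows $\pi_1(U)\cong\pi_1(N)$ via the inclusion. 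Hence $N\hookrightarrow U$ is a homotopy equivalence, and being a cofibration it makes $N$ a deformation retract of $U$; in particular $U$ is a homology cobordism from $N$ to its other boundary component $M$, and $\pi_1(M)\twoheadrightarrow\pi_1(U)\cong\pi_1(N)$.

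It then remains to choose the curves $\gamma_i$ and the $2$-handle framings so that $M$ is hyperbolic. By construction $M$ is the Dehn surgery on the framed $k$-component link $\Gamma=\bigcup_i\gamma_i\subset N'$, and the framings do not affect any of the homological bookkeeping above, so they are free. I would first isotope $\Gamma$ --- its knotting and linking being still essentially unconstrained --- so that $N'\setminus\Gamma$ is hyperbolic, using Myers' results on excellent $1$-manifolds in compact $3$-manifolds; this is possible even though $N'$ is reducible, because the ``algebraically once'' condition forces $\Gamma$ to meet every essential sphere, so the complement can be made irreducible as well as atoroidal and anannular. Then Thurston's hyperbolic Dehn surgery theorem lets me choose each framing (filling slope) outside the finitely many exceptional slopes on its cusp, making the filled manifold $M$ closed and hyperbolic.

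The hard part is exactly this last step: hyperbolicity of $M$ is in tension with the retraction, since the retraction forces the entire modification of $N$ to be homotopically invisible, so all the complexity of a hyperbolic manifold must be injected through the knotting, linking and framing of the cancelling handles rather than through their algebraic data. Reconciling these --- keeping the handle attachment algebraically cancelling while forcing the associated surgery link to be excellent in the unavoidably reducible $N'$ and the resulting Dehn filling to avoid exceptional slopes --- is where one must lean on Myers' constructions of hyperbolic $1$-manifolds in $3$-manifolds and on Thurston's Dehn surgery theorem; the remaining ingredients (the chain-level computation and the soft degree argument) are routine.
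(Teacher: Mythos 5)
The paper offers no proof of this statement---it is quoted verbatim from Ruberman with a citation---so the comparison below is with the known argument rather than with an internal proof. Your outline (cancelling $1$-/$2$-handle pairs on $N\times[0,1]$, Myers' excellence theorem to hyperbolize the exterior of the surgery link in $N'=N\#k(S^1\times S^2)$, Thurston's Dehn surgery theorem to choose framings) is the right shape, and both your reduction of the degree-one conclusion to the existence of the retraction and your $\Z$-coefficient computation of $H_*(U,N)$ are correct.

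The genuine gap is the retraction itself. The condition ``algebraically once through the $i$-th handle, algebraically zero through the $j$-th for $j>i$'' controls only the exponent sums of the $1$-handle generators $x_j$ in the words $\gamma_i\in\pi_1(N)*F\langle x_j\rangle$, and that supports neither of your two key claims. Tietze elimination requires $x_i$ to occur exactly once in $\gamma_i$, not merely with exponent sum $\pm1$: for a relator such as $x_i\,a\,x_i\,b\,x_i^{-1}$ with $a,b\in\pi_1(N)$, identifying the quotient is an instance of the adjunction problem, and $\pi_1(U)$ is in general a proper extension of $\pi_1(N)$ which need not even retract onto it. Likewise, the boundary matrix of $C_*(U,N;\Z[\pi_1U])$ is triangular with unit diagonal only after applying the augmentation $\Z[\pi_1U]\to\Z$; its invertibility over the group ring is precisely what would have to be proved, not a repetition of the integral computation. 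So neither $\pi_1(U)\cong\pi_1(N)$ nor the deformation retraction is established, and without the retraction there is no degree one map $M\to N$---which is the entire content the paper needs (it is used to conclude $\pi_1(M)\twoheadrightarrow\pi_1(W)$).

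The repair is to impose a stronger, non-homological condition on the attaching curves: require each $\gamma_i$ to lie in the normal closure of $x_1,\dots,x_k$ in $\pi_1(N)*F\langle x_j\rangle$, compatibly with the triangular exponent matrix---most simply, take $\gamma_i$ freely homotopic in $N'$ to the core circle of the $i$-th $S^1\times S^2$ summand. Then the retraction is built directly, with no need to compute $\pi_1(U)$: collapse each $1$-handle (send each $x_j$ to a constant loop), which makes every $\gamma_i$ null-homotopic in $N$, so the map extends over the $2$-handles. Myers' theorem is then applied \emph{within these prescribed homotopy classes} to realize $\bigcup_i\gamma_i$ by an excellent link in $N'$, and Thurston's theorem chooses the framings. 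With that correction the rest of your argument goes through.
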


  Applying Theorem~\ref{theorem:H-cob-to-hyperbolic-3mfd} to our $N$
  and by attaching the resulting homology cobordism to the above
  cobordism $V$, we obtain a new $R$-homology cobordism, say $W$,
  between $M_0$ and the resulting hyperbolic 3-manifold~$M$.  Since a
  degree one map induces a surjection on the fundamental group
  \cite[Lemma~15.12]{Hempel:1976-1}, $(W,M_0,M)$ satisfies the above
  properties (0), (1),~(2).
  
  The last sentence of
  Theorem~\ref{theorem:construction-of-3-manifold} follows from the
  above properties since the induced maps of $\pi_1(M)$, $\pi_1(M_0)$
  into $\pi_1(W)$ are all $R$-homology 2-connected, and therefore
  induce isomorphisms on the $R$-homology localization.
\end{proof}

\section{3-manifolds with local hidden torsion}
\label{section:3mfds-with-hidden-local-torsion}

Throughout this section, we fix an odd prime~$p$.  Several objects
constructed in this section depend on $p$, but for simplicity, we omit
$p$ in our notation.

\subsection{Construction of examples}
\label{subsection:construction-of-examples}

\subsubsection*{Construction of a ``seed'' 3-manifold}

Let $Y$ be the twisted torus bundle over $S^1$ with fundamental
group~$\Gamma$ defined in
Section~\ref{section:computation-for-torus-bundle-group}.  (For a
description of $Y$, see the beginning of
Section~\ref{section:computation-for-torus-bundle-group}.)  Recall
that the $\Z$-homology localization $\widehat \Gamma$ is given as the
central extension
\[
0\to \Z_{(2)}/\Z \to \widehat\Gamma \to (\Z_{(2)}^t)^2 \rtimes \Z \to 0.
\]
Let $M$ be a hyperbolic 3-manifold obtained by applying
Corollary~\ref{corollary:construction-of-3-manifold} to $Y$ and the
subgroup $H$ generated by $\frac{1}{p}+\Z \in \Z_{(2)}/\Z \subset
\widehat\Gamma$.  Then our $M$ is homology cobordant to~$Y$, and for
$\pi=\pi_1(M)$, there is a 2-connected surjection $\pi\to \Gamma$
inducing $\widehat\pi\cong \widehat\Gamma$.  Under this isomorphism we
identify $\widehat\pi$ with $\widehat\Gamma$; then by our choice of
$M$, $\frac{1}{p}+\Z \in \widehat\Gamma=\widehat\pi$ is the image of
some nontrivial element $[\alpha]\in \pi=\pi_1(M)$, where $\alpha$ is
a simple closed curve in~$M$.  Since $\pi$ is torsion-free, $[\alpha]$
has infinite order in $\pi$ but its image in
$\widehat\pi=\widehat\Gamma$ has order~$p$.  That is, $[\alpha]\in
\pi$ is a local hidden torsion.

Since $\Z_{(2)}/\Z\subset \widehat\Gamma=\widehat\pi$ lies in
$\widehat\pi_\omega$ by
Lemma~\ref{lemma:omega-lower-central-series-gamma-hat}, the pre-image
of $\Z_{(2)}/\Z$ in $\pi$ lies in $\pi_\omega$ by Stallings'
Theorem~\cite{Stallings:1965-1} $\pi/\pi_\omega \hookrightarrow
\widehat\Gamma/\widehat\Gamma_\omega$.
  Furthermore, $[\alpha]$ is not contained in $\pi_{\omega+1}$ since
  the image of $[\alpha]$ in $\widehat\Gamma$ is nontrivial and
  $\widehat\Gamma_{\omega+1}$ is trivial by
  Lemma~\ref{lemma:omega-lower-central-series-gamma-hat}.
Summarizing, we have proven the following:

\begin{lemma}
  \label{lemma:hidden-local-torsion-in-seed-mfd}
  The element $[\alpha]$ chosen above is local hidden torsion of
  $\pi_1(M)$ and lies in~$\pi_1(M)_\omega-\pi_1(M)_{\omega+1}$.
\end{lemma}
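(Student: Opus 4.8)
Since this lemma simply records as one statement what the preceding construction of $M$ already yields, the plan is to assemble its two assertions from the ingredients in place: the output of Corollary~\ref{corollary:construction-of-3-manifold} applied to $Y$, and the computation of the transfinite lower central series of $\widehat\Gamma$ in Lemma~\ref{lemma:omega-lower-central-series-gamma-hat}. Write $\pi=\pi_1(M)$; recall from the construction that $M$ is a closed hyperbolic 3-manifold homology cobordant to $Y$, that there is a 2-connected surjection $\pi\to\Gamma$ inducing an isomorphism $\widehat\pi\cong\widehat\Gamma$, and that under this identification $[\alpha]$ is a nontrivial element of $\pi$ whose image in $\widehat\pi=\widehat\Gamma$ equals $\tfrac1p+\Z\in\Z_{(2)}/\Z\subset\widehat\Gamma$.

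\textbf{Local hidden torsion.} Since $M$ is hyperbolic, $\pi$ is torsion-free, so the nontrivial element $[\alpha]$ has infinite order in $\pi$. On the other hand $\tfrac1p+\Z$ has order exactly $p$ in $\Z_{(2)}/\Z$, so the image of $[\alpha]$ under $\pi\to\widehat\pi$ has nontrivial finite order. By Definition~\ref{definition:local-hidden-torsion} this is precisely the assertion that $[\alpha]$ is local hidden torsion of $\pi$.

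\textbf{Position in the lower central series.} By Lemma~\ref{lemma:omega-lower-central-series-gamma-hat} we have $\widehat\Gamma_\omega=\Z_{(2)}/\Z$ and $\widehat\Gamma_{\omega+1}=1$. The composite $\pi\to\widehat\pi\cong\widehat\Gamma$ is 2-connected, so Stallings' theorem \cite{Stallings:1965-1} gives isomorphisms $\pi/\pi_q\cong\widehat\Gamma/\widehat\Gamma_q$ for every finite $q$ and hence a monomorphism $\pi/\pi_\omega\hookrightarrow\widehat\Gamma/\widehat\Gamma_\omega$ (an element of $\pi$ whose image in $\widehat\Gamma$ lies in $\widehat\Gamma_\omega=\bigcap_{q<\omega}\widehat\Gamma_q$ lies, via these isomorphisms, in every $\pi_q$, hence in $\pi_\omega$). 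Applying this to $[\alpha]$, whose image in $\widehat\Gamma$ lies in $\Z_{(2)}/\Z=\widehat\Gamma_\omega$, yields $[\alpha]\in\pi_\omega$. Conversely, naturality of the lower central series carries $\pi_{\omega+1}$ into $\widehat\Gamma_{\omega+1}=1$, so were $[\alpha]$ in $\pi_{\omega+1}$ its image $\tfrac1p+\Z\in\widehat\Gamma$ would be trivial, which it is not; hence $[\alpha]\notin\pi_{\omega+1}$. Since the lemma is a bookkeeping consequence of the construction, there is no substantive difficulty; the only point worth flagging is the passage from the finite Stallings isomorphisms to the $\omega$-level injection, which is the standard transfinite form of Stallings' theorem.
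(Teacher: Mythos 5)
Your proposal is correct and follows exactly the same route as the paper: torsion-freeness of the hyperbolic group gives infinite order, the identification of the image of $[\alpha]$ with $\tfrac1p+\Z$ of order $p$ gives local hidden torsion, Stallings' theorem applied at each finite stage gives $\pi/\pi_\omega\hookrightarrow\widehat\Gamma/\widehat\Gamma_\omega$ and hence $[\alpha]\in\pi_\omega$, and the triviality of $\widehat\Gamma_{\omega+1}$ combined with naturality rules out $[\alpha]\in\pi_{\omega+1}$. The only difference is that you spell out the passage from the finite-level Stallings isomorphisms to the $\omega$-level injection, which the paper states without elaboration.
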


 \begin{remark}   It follows that $\pi_1(M)$ has lower central series length
  $>\omega$.  Since $M$ is homology cobordant to $Y$ and
  $\Gamma=\pi_1(Y)$ has lower central series length $\omega$, this
  proves Theorem~\ref{theorem:answer-to-cochran-freedman} stated in
  the introduction.
  \end{remark}

\subsubsection*{Twisting around local hidden torsion}

For a knot $K$ in $S^3$, we define $M(\alpha,K)$ to be the 3-manifold
obtained by removing a tubular neighborhood of $\alpha$ from $M$ and
then filling it in with the exterior of $K$ in such a way that a
meridian of $K$ is identified with a parallel copy of $\alpha$ and a
longitude of $K$ is identified with a meridian curve of the tubular
neighborhood.  It is well known that the resulting $M(\alpha,K)$ is
always homology equivalent to~$M$.  Since we have to use it again
later in this paper, we state it as a lemma.

\begin{lemma}[Well-known]
  \label{lemma:infection-homology-equivalence}
  For a knot $K$, there is a homology equivalence $h_K\colon
  M(\alpha,K)\to M$ which induces a surjection ${h_K}_*\colon
  \pi_1(M(\alpha,K))\to \pi_1(M)$.
\end{lemma}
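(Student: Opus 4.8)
The plan is to build $h_K$ by hand as the ``identity away from $\alpha$, collapse on the knot exterior'' map. Write $A := M\setminus\inte N(\alpha)$ for the common piece, $T:=\partial N(\alpha)$ for the gluing torus, and $E_K:= S^3\setminus\inte N(K)$ for the knot exterior, so that $M = A\cup_T N(\alpha)$ and $M(\alpha,K) = A\cup_T E_K$, where the gluing homeomorphism $\phi\colon\partial E_K\to T$ sends the meridian $\mu_K$ of $K$ to a parallel copy $\lambda_\alpha$ of $\alpha$ on $T$ and the longitude $\lambda_K$ of $K$ to the meridian $\mu_\alpha$ of $N(\alpha)$. I would first construct a map $g\colon E_K\to N(\alpha)$ extending $\phi$. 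Since $N(\alpha)\simeq S^1$ is a $K(\Z,1)$, such an extension exists precisely when the composite $\pi_1(\partial E_K)\to\pi_1(N(\alpha))=\Z$ factors through $\pi_1(E_K)$; under $\phi$ this composite sends $\mu_K\mapsto 1$ and $\lambda_K\mapsto 0$ (because $\mu_\alpha$ bounds a disk in $N(\alpha)$ and $\lambda_\alpha$ is a generator of $\pi_1(N(\alpha))$), which is exactly the abelianization map $\pi_1(E_K)\to H_1(E_K)=\Z$ restricted to the boundary, since $\lambda_K$ is null-homologous in $E_K$. Concretely one takes the classifying map $E_K\to S^1$ of $H_1(E_K)$, homotopes it so that on $\partial E_K$ it agrees with $q\circ\phi$ for the projection $q\colon N(\alpha)\to S^1$, composes with a section $S^1\to N(\alpha)$, and then corrects on the collar of $\partial E_K$ by the homotopy extension property so that the result restricts to exactly $\phi$. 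Gluing $g$ to $\id_A$ gives $h_K\colon M(\alpha,K)\to M$.

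To see $h_K$ is a homology equivalence, I would compare the Mayer--Vietoris sequences for $M(\alpha,K)=A\cup_T E_K$ and $M=A\cup_T N(\alpha)$. The map $h_K$ induces the identity on the $A$- and $T$-terms and induces $g_*$ on the $E_K$-terms, and $g_*\colon H_*(E_K;\Z)\to H_*(N(\alpha);\Z)$ is an isomorphism: both groups are isomorphic to $H_*(S^1;\Z)$ (for a knot exterior one has $H_0=H_1=\Z$ and $H_i=0$ for $i\ge 2$), and $g$ is an $H_1$-isomorphism by construction. The five lemma then yields that ${h_K}_*\colon H_*(M(\alpha,K);\Z)\to H_*(M;\Z)$ is an isomorphism.

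For surjectivity of ${h_K}_*$ on $\pi_1$, I would use van Kampen together with the observation that the inclusion $A\hookrightarrow M$ already induces a surjection $\pi_1(A)\twoheadrightarrow\pi_1(M)$: indeed $\pi_1(M)=\pi_1(A)\ast_{\pi_1(T)}\pi_1(N(\alpha))$ and $\pi_1(N(\alpha))=\Z$ is generated by the core $\lambda_\alpha$, which lies in the image of $\pi_1(T)\to\pi_1(A)$. Since $h_K$ restricts to the identity on $A$, the triangle $\pi_1(A)\to\pi_1(M(\alpha,K))\xrightarrow{{h_K}_*}\pi_1(M)$ agrees with $\pi_1(A)\to\pi_1(M)$, so the image of ${h_K}_*$ contains the image of $\pi_1(A)$ and is therefore all of $\pi_1(M)$.

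The only mildly delicate step is the construction of $g$ with the prescribed boundary behaviour, that is, upgrading the ``obvious'' degree-one collapse $E_K\to N(\alpha)$ to a map that restricts to the gluing homeomorphism $\phi$ on the nose rather than merely up to homotopy; this is routine obstruction/homotopy-extension theory once the homological bookkeeping on $\partial E_K$ above is recorded. Everything else is formal Mayer--Vietoris and van Kampen, which is why this is ``well-known''.
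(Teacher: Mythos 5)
Your proposal is correct and follows essentially the same route the paper takes: construct a degree-one ``collapse'' map $g\colon E_K\to N(\alpha)$ extending the gluing homeomorphism, glue it with $\id_A$, then check the result is a homology equivalence and a $\pi_1$-surjection via Mayer--Vietoris and van Kampen. The paper merely cites an earlier reference for the existence of the map of pairs $(E_K,\partial E_K)\to(S^1\times D^2,S^1\times S^1)$, whereas you construct $g$ explicitly via the classifying map of $H_1(E_K)$ and the homotopy extension property; otherwise the arguments coincide.
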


In fact, for the exterior $E_K$ of a knot $K$, there is a homology
equivalence $(E_K,\partial E_K) \to (S^1\times D^2, S^1\times S^1)$
which induces a surjection on~$\pi_1(-)$, and our $h_K$ is obtained by
glueing this with the identity map of the exterior of $\alpha\subset
M$ (e.g., see \cite[Proof of Proposition~4.8]{Cha:2007-1}).

Now, applying Theorem~\ref{theorem:H-cob-to-hyperbolic-3mfd} to our
$M(\alpha,K)$, we obtain a hyperbolic 3-manifold $\Hyp_K$ homology
cobordant to $M(\alpha,K)$ and a homology equivalence $g_K\colon
\Hyp_K \to M(\alpha,K)$ inducing a surjection on~$\pi_1(-)$.  Note
that if $K$ is unknotted, we may assume that $\Hyp_K$ is equal to~$M$.

\begin{proposition}
  \label{proposition:common-properties-of-examples}
  For any knot $K$, the following hold:
  \begin{enumerate}
  \item There is a homology equivalence $f_K\colon \Hyp_K \to M$ which
    induces a surjection on the fundamental group.
  \item The homology localization of $\pi_1(\Hyp_K)$ is isomorphic to
    $\widehat\Gamma$.
  \item There are local hidden torsion elements of $\pi_1(\Hyp_K)$ lying
    in $\pi_1(\Hyp_K)_\omega$.
 \end{enumerate}
\end{proposition}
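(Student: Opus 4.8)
The plan is to deduce all three parts from the corresponding properties of the seed manifold $M$, using only that $g_K$ and $h_K$ are homology equivalences inducing surjections on fundamental groups. First I would set $f_K=h_K\circ g_K\colon \Hyp_K\to M(\alpha,K)\to M$; as a composition of ($\Z$-)homology equivalences each inducing a $\pi_1$-surjection, $f_K$ is again a homology equivalence inducing a surjection on $\pi_1$, which gives part~(1). For part~(2), recall that a homology equivalence induces a $2$-connected homomorphism on fundamental groups, and that any $2$-connected homomorphism of finitely presented groups induces an isomorphism on (integral) homology localization; since $\pi_1(\Hyp_K)$ and $\pi_1(M)$ are finitely presented, we get $\widehat{\pi_1(\Hyp_K)}\cong\widehat{\pi_1(M)}\cong\widehat\Gamma$, the last isomorphism being built into the construction of the seed manifold.

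For part~(3), I would produce an explicit local hidden torsion element by lifting the one already found in $\pi_1(M)$. Since $f_{K*}\colon\pi_1(\Hyp_K)\to\pi_1(M)$ is surjective, choose $\beta\in\pi_1(\Hyp_K)$ with $f_{K*}(\beta)=[\alpha]$, where $[\alpha]$ is the element of Lemma~\ref{lemma:hidden-local-torsion-in-seed-mfd}. By naturality of the localization map, the identification $\widehat{\pi_1(\Hyp_K)}\cong\widehat{\pi_1(M)}\cong\widehat\Gamma$ sends the image of $\beta$ to the image of $[\alpha]$, namely $\tfrac1p+\Z\in\Z_{(2)}/\Z\subset\widehat\Gamma$, which has order $p$. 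Hence $\beta$ has nontrivial finite order in its image in $\widehat{\pi_1(\Hyp_K)}$; and since $\Hyp_K$ is hyperbolic, $\pi_1(\Hyp_K)$ is torsion-free, so $\beta$ — being nontrivial, as its localization image is — has infinite order in $\pi_1(\Hyp_K)$. Therefore $\beta$ is local hidden torsion of $\pi_1(\Hyp_K)$.

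It remains to check that $\beta\in\pi_1(\Hyp_K)_\omega$, which I regard as the one point requiring any care, since it concerns the transfinite term whereas Stallings' theorem directly controls only the finite lower central quotients. The argument is identical to the one used for the seed manifold: the composite $\pi_1(\Hyp_K)\to\widehat{\pi_1(\Hyp_K)}\cong\widehat\Gamma$ induces an isomorphism on each finite lower central quotient by Stallings' theorem~\cite{Stallings:1965-1}, and intersecting over all finite stages yields an injection $\pi_1(\Hyp_K)/\pi_1(\Hyp_K)_\omega\hookrightarrow\widehat\Gamma/\widehat\Gamma_\omega$. Since the image of $\beta$ in $\widehat\Gamma$ lies in $\Z_{(2)}/\Z=\widehat\Gamma_\omega$ by Lemma~\ref{lemma:omega-lower-central-series-gamma-hat}, it maps to the identity in $\widehat\Gamma/\widehat\Gamma_\omega$, forcing $\beta\in\pi_1(\Hyp_K)_\omega$. (If one wants more than a single element, the powers $\beta^j$ with $(j,p)=1$ are again local hidden torsion lying in $\pi_1(\Hyp_K)_\omega$.) So the main obstacle is not a deep one — everything beyond the transfinite bookkeeping is a formal consequence of the naturality of localization together with the already-established facts about $\widehat\Gamma$ and about $M$.
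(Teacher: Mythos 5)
Your proposal is correct and matches the paper's argument for parts (1) and (2) exactly. For part (3) you differ only in a small bookkeeping choice: the paper applies Stallings's theorem to the surjective 2-connected map $f_{K*}\colon\pi_1(\Hyp_K)\to\pi_1(M)$ to get $\pi_1(\Hyp_K)/\pi_1(\Hyp_K)_\omega\cong\pi_1(M)/\pi_1(M)_\omega$ and then quotes the already-established fact that $[\alpha]\in\pi_1(M)_\omega$, whereas you rerun the seed-manifold argument directly — applying Stallings to $\pi_1(\Hyp_K)\to\widehat{\pi_1(\Hyp_K)}\cong\widehat\Gamma$ to get the injection $\pi_1(\Hyp_K)/\pi_1(\Hyp_K)_\omega\hookrightarrow\widehat\Gamma/\widehat\Gamma_\omega$ and then using $\widehat\Gamma_\omega=\Z_{(2)}/\Z$. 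These are equivalent; your version is marginally more self-contained while the paper's leans a bit harder on Lemma~\ref{lemma:hidden-local-torsion-in-seed-mfd}. Either is fine.
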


\begin{proof}
  (1) Composing $g_K$ with $h_K$ given in
  Lemma~\ref{lemma:infection-homology-equivalence}, we obtain a
  desired homology equivalence $f_K\colon \Hyp_K\to M$.

  (2) Since ${f_K}_*\colon \pi_1(\Hyp_K) \to \pi_1(M)$ is 2-connected,
  $\widehat{\pi_1(\Hyp_K)} \cong \widehat{\pi_1(M)} = \widehat\Gamma$.

  (3) Since ${f_K}_*$ is surjective, there is an element, say
  $[\beta]$, in $\pi_1(\Hyp_K)$ which is sent to
  $[\alpha]\in\pi_1(M)$.  Since $\pi_1(\Hyp_K)$ is torsion-free and
  $[\alpha]$ is a local hidden torsion element by
  Lemma~\ref{lemma:hidden-local-torsion-in-seed-mfd}, it follows that
  $[\beta]$ is a local hidden torsion element in~$\pi_1(\Hyp_K)$.
  Since $f_K$ induces an isomorphism
  $\pi_1(\Hyp_K)/\pi_1(\Hyp_K)_\omega \cong \pi_1(M)/\pi_1(M)_\omega$
  by Stallings' Theorem~\cite{Stallings:1965-1} and $[\alpha] $ lies
  in $\pi_1(M)_\omega$ by
  Lemma~\ref{lemma:hidden-local-torsion-in-seed-mfd}, we have
  $[\beta]\in \pi_1(\Hyp_K)_\omega$.
\end{proof}

Now, Theorem~\ref{theorem:intro-hidden-local-torsion} in the
introduction is an immediate consequence of
Proposition~\ref{proposition:common-properties-of-examples}.

\subsection{Homology cobordism classes and amenable $L^2$-signatures}
\label{subsection:examples-with-distict-homology-cobordism-classes}

We continue to use the notations of the previous subsection.  In
particular $p$ is a fixed odd prime and $M$ and $\Hyp_K$ are the
hyperbolic 3-manifolds given in
Section~\ref{subsection:construction-of-examples}.

We consider the $L^2$-signature invariants discussed in
Section~\ref{subsection:homology-cobordism-invariant}: recall that for
any 3-manifold $N$ with $\widehat{\pi_1(N)}\cong \widehat\Gamma$, we
define $\phi_N$ to be the composition
\[
\phi_N\colon \pi_1(N) \to \widehat{\pi_1(N)} \to
\widehat{\pi_1(N)}/\cP^3\widehat{\pi_1(N)} \cong
\widehat\Gamma/\cP^3 \widehat\Gamma
\]
where $\cP^n G$ denotes the mixed-coefficient commutator series
associated to $\cP=(\Z,\Z,\Z_{(p)})$.

Using the key fact that the image of $[\alpha]$ in the homology
localization has finite order $p$, we show the following formula for
the $L^2$-signature of~$\Hyp_K$.  We denote the Levine-Tristram signature
function of a knot $K$ by
\[
\sigma_K(w)=\sign (1-w)A+(1-\overline w)A^T
\]
for $w\in S^1\subset \C$, where $A$ is a Seifert matrix for~$K$.

\begin{lemma}
  \label{lemma:computation-of-L2sign}
  For our $M$ and $\Hyp_K$, the following holds:
  \[
  \rhot(\Hyp_K,\phi_{\Hyp_K}) = \rhot(M,\phi_M) + \frac1p \sum_{k=0}^{p-1}
  \sigma_K(e^{2\pi k\sqrt{-1}/p})
  \]
\end{lemma}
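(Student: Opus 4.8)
The plan is to compute $\rhot(\Hyp_K,\phi_{\Hyp_K})$ by comparing it with $\rhot(M,\phi_M)$ via the standard infection (satellite) formula for $L^2$-signatures, suitably adapted to the amenable group $\widehat\Gamma/\cP^3\widehat\Gamma$. First I would note that $\Hyp_K$ is homology cobordant to $M(\alpha,K)$, and $\widehat{\pi_1(\Hyp_K)}\cong\widehat{\pi_1(M(\alpha,K))}\cong\widehat\Gamma$ with compatible maps to $\widehat\Gamma/\cP^3\widehat\Gamma$; hence by Theorem~\ref{theorem:homology-cobordism-invariant-over-gamma-hat} it suffices to prove the formula with $M(\alpha,K)$ in place of $\Hyp_K$. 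So the real content is the effect of replacing a tubular neighborhood of $\alpha$ by the exterior $E_K$ of $K$ on the Cheeger--Gromov invariant with coefficients in $\Gamma':=\widehat\Gamma/\cP^3\widehat\Gamma$, where the relevant homomorphism $\phi_{M(\alpha,K)}$ restricted to $\pi_1(\alpha)$ sends the class of $\alpha$ to the image of $\frac1p+\Z$, an element of order exactly $p$ (using Remark~\ref{remark:order-in-mixed-coefficient-quotients}, since $[\alpha]$ has order $p$ in $\widehat\Gamma$).

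The key step is the infection formula. I would build a $4$-manifold $Z$ exhibiting $M(\alpha,K)$ and $M$ together with a copy of $E_K$: glue $M\times[0,1]$ along a neighborhood of $\alpha$ to $E_K\times[0,1]$ using the infection gluing, so that $\partial Z = M(\alpha,K) \sqcup -M \sqcup (-M_K)$ where $M_K$ denotes $0$-surgery on $K$ (or handle it as an equality of $\rho$-invariants up to the closed-up piece). Over the coefficient system $\Gamma'$, the map on $\pi_1(E_K)$ factors through $\pi_1(E_K)\to\Z=\langle\text{meridian}\rangle\to\Z/p\hookrightarrow\Gamma'$ because $[\alpha]$ maps to a torsion element of order $p$ and $\Z_{(2)}/\Z\subset\widehat\Gamma$ is central; so the $L^2$-signature contribution of the $E_K$ piece is the $\rho$-invariant of $E_K$ (equivalently of $M_K$) with respect to the $\Z/p$-representation, which by the classical Casson--Gordon/Litherland computation equals $\frac1p\sum_{k=0}^{p-1}\sigma_K(e^{2\pi k\sqrt{-1}/p})$. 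Additivity of $L^2$-signature defects under gluing along $3$-manifolds (from \cite{Cha-Orr:2009-01}, using that all the pieces have the required $D(\Z/p)$-amenable coefficients and vanishing relevant homology), together with the fact that the solid-torus piece contributes the same $\rhot(M,\phi_M)$, yields the stated equality.

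The main obstacle I anticipate is bookkeeping the coefficient systems so that the $L^2$-signature additivity theorem applies cleanly: one must check that $\pi_1(E_K\times[0,1])$, $\pi_1(M\times[0,1])$, and $\pi_1$ of the gluing $3$-manifolds all map compatibly into the \emph{single} amenable $D(\Z/p)$-group $\Gamma'$, that the maps agree on overlaps (this is where centrality of $\Z_{(2)}/\Z$ and the order-$p$ property of $[\alpha]$ are used — the longitude of $K$ must die, which holds since it lies in $\pi_1(E_K)^{(1)}$ and maps into the torsion part), and that the closed $4$-manifold needed to define the defects can be chosen over $\Gamma'$. A secondary technical point is identifying the $\Z/p$-representation $\rho$-invariant of $M_K$ with $\frac1p\sum_k\sigma_K(e^{2\pi k\sqrt{-1}/p})$; this is standard (Casson--Gordon, Litherland, or \cite[Section~7]{Cha-Orr:2009-01}) but should be cited precisely. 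Once the coefficient systems are aligned, the computation reduces to known additivity and the known signature formula, so the proof is essentially an organizational argument.
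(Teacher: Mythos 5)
Your proposal matches the paper's proof: both reduce to $M(\alpha,K)$ by the homology cobordism invariance of Theorem~\ref{theorem:homology-cobordism-invariant-over-gamma-hat}, apply the infection formula $\rhot(M(\alpha,K),\phi_M\circ{h_K}_*)=\rhot(M,\phi_M)+\rhot(N_K,\phi')$ from \cite[Proposition~3.2]{Cochran-Orr-Teichner:2002-1}, and then use the order-$p$ property of $\phi_M([\alpha])$ together with $L^2$-induction and \cite[Lemma~8.7]{Cha-Orr:2009-01} to evaluate $\rhot(N_K,\phi')$ as the average of Levine--Tristram signatures at $p$th roots of unity. Your explicit reconstruction of the cobordism $Z$, and your bookkeeping of coefficient systems and the longitude lying in the commutator subgroup, is precisely the content of the cited infection formula, so the route is essentially the same.
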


\begin{proof}
  By
  Theorem~\ref{theorem:homology-cobordism-invariant-over-gamma-hat},
  $\rhot(N,\phi_N)$ is a homology cobordism invariant.  Therefore, in
  our case, we may replace $(\Hyp_K,\phi_{\Hyp_K})$ by
  $(M(\alpha,K),\phi_{M(\alpha,K)})$ since $\Hyp_K$ is homology cobordant
  to~$M(\alpha,K)$.

  By \cite[Proposition~3.2]{Cochran-Orr-Teichner:2002-1} (see also
  \cite[Lemma~2.3]{Cochran-Harvey-Leidy:2009-1}), for any $\phi\colon
  \pi_1(M)\to G$, we have
  \[
  \rhot(\Hyp_K,\phi\circ {h_K}_*) = \rhot(M,\phi) + \rhot(N_K,\phi')
  \]
  where $N_K$ is the zero-surgery manifold of $K$, $\phi'\colon
  \pi_1(N_K) \to G$ is the map that factors through $H_1(N_K)=\Z$ and
  sends any meridian of $K$ to~$\phi([\alpha])$, and $h_K\colon
  M(\alpha,K) \to M$ is the homology equivalence given in
  Lemma~\ref{lemma:infection-homology-equivalence}.

  For our purpose, consider the case of $\phi=\phi_M$.  Then $\phi_M
  \circ {h_K}_*$ is equal to $\phi_{\Hyp_K}$ (up to an automorphism of
  $\widehat\Gamma/\cP^3\widehat\Gamma$) since $h_K$ gives rise to an
  isomorphism on $\widehat{\pi_1(-)}$.  Therefore
  $\rhot(\Hyp_K,\phi_M\circ {h_K}_*) = \rhot(\Hyp_K,\phi_{\Hyp_K})$.

  Also, since the image of $[\alpha] \in \pi_1(M)$ in
  $\widehat{\pi_1(M)} = \widehat\Gamma$ has order $p$,
  $\phi_M([\alpha])$ has order $p$ in
  $\widehat\Gamma/\cP^3\widehat\Gamma$ by
  Lemma~\ref{lemma:computation-mixed-coeffcient-series}.  (See also
  Remark~\ref{remark:order-in-mixed-coefficient-quotients}.)
  Therefore, by the $L^2$-induction property and
  \cite[Lemma~8.7]{Cha-Orr:2009-01}, we obtain
  \[
  \rhot(N_K,\phi') = \frac{1}{p}\sum_{k=0}^{p-1} \sigma_K(e^{2\pi
    k\sqrt{-1}/p}).
  \]
  From these the conclusion follows.
\end{proof}

\begin{theorem}
  \label{theorem:non-homology-cobordant-examples}
  There is an infinite sequence of knots $K_0 =$ unknot, $K_1$,
  $K_2,\ldots$ in $S^3$, for which
  \begin{enumerate}
  \item $\int_{S_1} \sigma_{K_i}(w)\,dw=0$ for any $i$, and
  \item $\Hyp_{K_i}$ and $\Hyp_{K_j}$ are not homology cobordant whenever
    $i\ne j$.
  \end{enumerate}
\end{theorem}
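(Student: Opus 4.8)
The plan is to pick the knots $K_i$ so that the real numbers $\rhot(\Hyp_{K_i},\phi_{\Hyp_{K_i}})$ are pairwise distinct while each integral $\int_{S^1}\sigma_{K_i}(w)\,dw$ vanishes; then (1) holds by construction, and (2) follows at once from Proposition~\ref{proposition:common-properties-of-examples}(2) together with Theorem~\ref{theorem:homology-cobordism-invariant-over-gamma-hat}. Indeed, for every knot $K$ we have $\widehat{\pi_1(\Hyp_K)}\cong\widehat\Gamma$, so $\phi_{\Hyp_K}$ and $\rhot(\Hyp_K,\phi_{\Hyp_K})$ are defined, and the latter is a homology cobordism invariant by Theorem~\ref{theorem:homology-cobordism-invariant-over-gamma-hat}; hence if $\Hyp_{K_i}$ and $\Hyp_{K_j}$ were homology cobordant then $\rhot(\Hyp_{K_i},\phi_{\Hyp_{K_i}})=\rhot(\Hyp_{K_j},\phi_{\Hyp_{K_j}})$, which by Lemma~\ref{lemma:computation-of-L2sign} would force
\[
\sum_{k=0}^{p-1}\sigma_{K_i}(e^{2\pi k\sqrt{-1}/p}) = \sum_{k=0}^{p-1}\sigma_{K_j}(e^{2\pi k\sqrt{-1}/p}).
\]
So it suffices to construct a sequence of knots $K_i$ with $\int_{S^1}\sigma_{K_i}=0$ whose root-of-unity sums $\sum_{k=0}^{p-1}\sigma_{K_i}(e^{2\pi k\sqrt{-1}/p})$ are pairwise distinct.

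These I would obtain as connected sums of a single building-block knot $J$ chosen so that
\[
\int_{S^1}\sigma_J(w)\,dw = 0 \qquad\text{and}\qquad \sum_{k=0}^{p-1}\sigma_J(e^{2\pi k\sqrt{-1}/p}) \ne 0.
\]
Note these two conditions force $\sigma_J$ to change sign: if $\sigma_J\le 0$ everywhere, the root-of-unity sum would be negative, but then so would the integral. The clean way to decouple the two quantities is to let $\sigma_J$ be supported on very short arcs. Using the classical realizability of Levine--Tristram signature functions (equivalently, realizing a suitable integral Seifert form), take $J$ with $\sigma_J$ equal to $+2$ on a short arc centered at $e^{2\pi\sqrt{-1}/p}$ together with the complex-conjugate arc, equal to $-2$ on a short arc of the same total length that is disjoint from all $p$-th roots of unity together with its conjugate, and $0$ elsewhere. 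If the arcs are short enough, $\int_{S^1}\sigma_J = 0$ by the length balance while only $k\equiv\pm 1\pmod p$ contribute to the root-of-unity sum, so $\sum_{k=0}^{p-1}\sigma_J(e^{2\pi k\sqrt{-1}/p}) = 4$. Now put $K_0 =$ unknot and $K_i = \#^{i}J$ (connected sum of $i$ copies) for $i\ge 1$. Since the Levine--Tristram signature is additive under connected sum, $\sigma_{K_i} = i\,\sigma_J$, so $\int_{S^1}\sigma_{K_i} = 0$ for every $i$, proving (1), and by Lemma~\ref{lemma:computation-of-L2sign} (using $\Hyp_{K_0}=M$)
\[
\rhot(\Hyp_{K_i},\phi_{\Hyp_{K_i}}) = \rhot(M,\phi_M) + \frac{4i}{p},
\]
so these are pairwise distinct and (2) follows as above.

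The main difficulty is the construction of $J$: one must produce a knot whose Levine--Tristram signature function integrates to zero over $S^1$ yet does not vanish at the $p$-th roots of unity. This is precisely where we exploit the tension between the ``continuous'' quantity $\int_{S^1}\sigma$, which governs the torsion-free $L^2$-signature that has to be killed in order to defeat all previously known invariants (cf. Theorem~\ref{theorem:intro-main}(3)--(4)), and the ``discrete'' average over $p$-th roots of unity, through which the local hidden torsion is detected. A naive attempt, such as balancing the signature contributions of torus knots against those of their mirrors, makes the integral vanish but also kills the root-of-unity sum to leading order in $p$; so the delicate short-arc realization, rather than an off-the-shelf family, is what the argument needs.
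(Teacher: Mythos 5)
Your proposal follows the paper's route: reduce the theorem to producing a single knot $J$ with $\int_{S^1}\sigma_J(w)\,dw = 0$ and $\sum_{k=0}^{p-1}\sigma_J(e^{2\pi k\sqrt{-1}/p})\neq 0$, set $K_0$ = unknot and $K_i=\#^i J$, and then combine Lemma~\ref{lemma:computation-of-L2sign}, Proposition~\ref{proposition:common-properties-of-examples}, and Theorem~\ref{theorem:homology-cobordism-invariant-over-gamma-hat} exactly as you do. That chain of deductions, including the additivity of $\sigma$ under connected sum and the pairwise-distinctness of the resulting $\rhot$ values, is correct and is what the paper does.

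The one place you diverge is the existence of the building block $J$. The paper simply cites \cite[Proposition~4.12]{Cha:2010-01} for such a knot, whereas you sketch a direct construction via a ``short-arc'' signature function. This is a reasonable strategy, but as written it has a gap: the jumps of a Levine--Tristram signature function occur only at roots of the Alexander polynomial of the knot on $S^1$, so you cannot prescribe a step function with arbitrary support. To make the sketch rigorous you would need to specify algebraic jump points near $e^{2\pi\sqrt{-1}/p}$ and exhibit a Seifert matrix (or cite a realization theorem) producing precisely the claimed signature function, including verifying the integral cancellation numerically rather than by an informal ``arcs of the same total length'' argument. Also, your closing remark that balancing torus knots against their mirrors cannot work ``to leading order in $p$'' is asserted without proof and is not needed for the argument; I would either substantiate it or drop it. Replacing the informal realization with a citation to \cite[Proposition~4.12]{Cha:2010-01} (or with an explicit Seifert-form computation) brings your proof into full agreement with the paper's.
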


We remark that the conclusion on the signature function integral will
be used in the next subsection.

\begin{proof}
  By \cite[Proposition 4.12]{Cha:2010-01}, there exists a knot $K$
  satisfying
  \[
  \sum_{k=0}^{p-1} \sigma_K(e^{2k\pi\sqrt{-1}/p}) \ne 0
  \quad\text{and}\quad \int_{S_1} \sigma_K(w)\,dw=0. \tag{$*$}
  \]
  Furthermore, we can choose an infinite sequence of knots $K_1, K_2,
  \ldots$ for which $(*)$ holds and the values of the sum
  $\sum_{k=0}^{p-1} \sigma_{K_i}(e^{2k\pi\sqrt{-1}/p})$ are all
  nonzero and mutually distinct.  For example, we may choose as $K_n$
  the connected sum of $n$ copies of a fixed knot $K$
  satisfying~$(*)$.

  By Lemma~\ref{lemma:computation-of-L2sign}, we have
  \[
  \rhot(\Hyp_{K_i},\phi_{\Hyp_{K_i}}) = 
  \begin{cases}
    \rhot(M,\phi_M) &\text{if }i=0, \\
    \rhot(M,\phi_M) + \frac1p \sum_{k=0}^{p-1}
    \sigma_K(e^{2k\pi\sqrt{-1}/p}) & \text{otherwise}.
  \end{cases}
  \]
  By our choice of the $K_i$, $\rhot(\Hyp_{K_i},\phi_{\Hyp_{K_i}}) \ne
  \rhot(\Hyp_{K_j},\phi_{\Hyp_{K_j}})$ whenever $i\ne j$.  By
  Theorem~\ref{theorem:homology-cobordism-invariant-over-gamma-hat},
  it follows that $\Hyp_{K_i}$ and $\Hyp_{K_j}$ are not homology cobordant
  whenever $i\ne j$.
\end{proof}  

From Proposition~\ref{proposition:common-properties-of-examples} and
Theorem~\ref{theorem:non-homology-cobordant-examples}, it follows that
the hyperbolic 3-manifolds $\Hyp_{K_i}$ in
Theorem~\ref{theorem:non-homology-cobordant-examples} satisfy the
conclusions (1) and (2) of Theorem~\ref{theorem:intro-main} in the
introduction.  The remaining conclusions of
Theorem~\ref{theorem:intro-main}, namely the fact that all the
$\Hyp_{K_i}$ look the same to the eyes of previously known homology
cobordism invariants, will be shown in the next subsection.

\subsection{Invariants from $p$-groups, iterated $p$-covers, and PTFA
  groups}
\label{subsection:p-and-PTFA-invariants}

In this subsection we consider known homology cobordism invariants
obtained from $p$-groups, iterated $p$-covers, and PTFA groups.  We
start by recalling known invariants associated to characters and
representations.

\begin{itemize}
\item For a character $\phi\colon \pi_1(M)\to \Z_d$ of a closed
  3-manifold $M$, the multisignature (= Casson-Gordon invariant =
  Atiyah-Singer $\alpha$-invariant) is defined in
  \cite{Wall:1970-1,Casson-Gordon:1978-1,Atiyah-Singer:1968-3}.  We
  denote it by $\sigma(M,\phi)$.
\item More generally, for a finite dimensional unitary representation
  $\theta\colon\pi_1(M)\to U(k)$, Atiyah-Patodi-Singer's (reduced)
  $\eta$-invariant $\tilde\eta(M,\theta)$ is defined
  in~\cite{Atiyah-Patodi-Singer:1975-2}.
\item For a representation $\theta\colon \pi_1(M)\to
  \operatorname{GL}(\C,k)$ and a homomorphism $\psi\colon \pi_1(M) \to
  H$ into a free abelian group $H$, a twisted torsion invariant
  $\tau^{\theta \otimes \psi}(M)$ is defined
  in~\cite{Cha-Friedl:2010-01}.
\end{itemize}

These are known to give homology cobordism invariants when the
representations factors through $p$-groups and characters are of prime
power order.  (See \cite{Gilmer-Livingston:1983-1, Ruberman:1984-1,
  Levine:1994-1, Cha-Friedl:2010-01}.)  The following homology
cobordism invariant obtained from iterated covers is also closely
related to $p$-groups:

\begin{itemize}
\item For a tower of iterated abelian $p$-covers
  \[
  M=M_0 \leftarrow M_1 \leftarrow \cdots \leftarrow M_n
  \]
  (here $M_n\to M$ is not necessarily abelian nor regular) and a
  character $\phi\colon \pi_1(M_n)\to \Z_d$ of prime power order, a
  Hirzebruch-type Witt-class-valued invariant $\lambda(M_n,\phi)\in
  L_0(\Q(\zeta_d))$ is defined in~\cite{Cha:2007-1}.
\end{itemize}

Following~\cite{Cha:2007-2}, we call a pair $(\{M_i\},\phi)$ as above
a \emph{$p$-structure} of height $n$ for~$M$.

We continue to use the notations of
Section~\ref{subsection:construction-of-examples}.  The following
theorem says that these invariants related to $p$-groups do not
distinguish the homology cobordism classes of our 3-manifolds~$\Hyp_K$.
The key fact used in the proof is that our hidden torsion element
$\alpha$ is invisible in the nilpotent completion.

\begin{proposition}
  \label{proposition:p-group-invariants-do-not-detect}
  For any knot $K$, the following hold:
  \begin{enumerate}
  \item For any prime power order character $\phi\colon \pi_1(M)\to
    \Z_{d}$, $\sigma(M,\phi)=\sigma(\Hyp_K,\phi\circ {f_K}_*)$.
  \item For any unitary representation $\theta$ of $\pi_1(M)$
    factoring through a $p$-group,
    $\tilde\eta(M,\theta)=\tilde\eta(\Hyp_K,\theta\circ {f_K}_*)$.
  \item For any representation $\theta\colon \pi_1(M)\to
    \operatorname{GL}(\C,k)$ factoring through a $p$-group and for any
    homomorphism $\psi\colon \pi_1(M) \to H$ with $H$ free abelian, we
    have $\tau^{\theta\otimes\psi}(M) = \tau^{(\theta\circ{f_K}_*)
      \otimes (\psi\circ{f_K}_*)}(\Hyp_K)$.
  \item The map $f_K$ gives rise to a 1-1 correspondences between
    $p$-structures of $M$ and $\Hyp_K$ via pullback (i.e., $f_K$ is a
    $p$-tower map in the sense of \cite{Cha:2007-1}).  If
    $(\{M_i\},\phi)$ and $(\{M_i'\},\phi')$ are correponding
    $p$-structures of height $n$ for $M$ and $\Hyp_K$, respectively, then
    $\lambda(M_n,\phi)=\lambda(M_n',\phi')$.
  \end{enumerate}
\end{proposition}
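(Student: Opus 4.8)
The plan is to show that all four invariants factor through the nilpotent completion (or more precisely, through a nilpotent quotient) of $\pi_1$, on which $f_K$ induces an isomorphism by Stallings' theorem, so the invariants agree. The crucial observation driving everything is that $f_K\colon \pi_1(\Hyp_K)\to\pi_1(M)$ is 2-connected (by Proposition~\ref{proposition:common-properties-of-examples}), hence by Stallings' theorem it induces isomorphisms $\pi_1(\Hyp_K)/\pi_1(\Hyp_K)_q\cong\pi_1(M)/\pi_1(M)_q$ on all \emph{finite} lower central quotients, and therefore on the nilpotent completion $\varprojlim \pi/\pi_q$. A $p$-group is nilpotent, and a finite cyclic group of prime power order $p^a$ is nilpotent; the key point is that any homomorphism from $\pi_1(M)$ to such a group factors through some $\pi_1(M)/\pi_1(M)_q$ for $q$ finite, hence (by the Stallings isomorphism) comes from a homomorphism on $\pi_1(\Hyp_K)/\pi_1(\Hyp_K)_q$, i.e., equals $\theta_K\circ{f_K}_*$ for a unique $\theta_K$ on $\pi_1(\Hyp_K)$ — and conversely every such $\theta_K$ arises this way. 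This sets up the bijection of representations used throughout.

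\textbf{Invariants (1), (2), (3): bordism-theoretic argument.} For each of the Casson-Gordon multisignature, the APS $\rho$-invariant, and the Cha-Friedl twisted torsion, the relevant homology cobordism invariance theorem in the cited literature (\cite{Gilmer-Livingston:1983-1, Ruberman:1984-1, Levine:1994-1, Cha-Friedl:2010-01}) already asserts that the invariant is unchanged under a $\Z/p^a$-homology cobordism with compatible representation to a $p$-group. So the plan is: since $M(\alpha,K)$ is homology equivalent to $M$ via $h_K$, and $\Hyp_K$ is homology cobordant to $M(\alpha,K)$ via $g_K$, I would want an actual 4-manifold cobordism. The cleanest route is to run the infection/satellite picture: $M(\alpha,K)$ is obtained from $M$ by infection along the knot $K$ on the curve $\alpha$, and since $\alpha\in\pi_1(M)_\omega\subset\pi_1(M)_q$ for every finite $q$, the composite $\pi_1(M)\to$ ($p$-group) kills $[\alpha]$; hence the representation extends over the standard cobordism realizing the infection (built from $E_K\times[0,1]$ glued to $(S^1\times D^2)\times[0,1]$), because on the infection region the representation is trivial on $\pi_1$. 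Composing with the homology cobordism from Theorem~\ref{theorem:H-cob-to-hyperbolic-3mfd} gives a homology cobordism from $\Hyp_K$ to $M$ over which the $p$-group representation extends, and then the cited invariance theorems apply verbatim. The needed input "$[\alpha]\in\pi_1(M)_\omega$" is exactly Lemma~\ref{lemma:hidden-local-torsion-in-seed-mfd}.

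\textbf{Invariant (4): $p$-tower maps.} Here I would invoke the formalism of \cite{Cha:2007-1}: a degree-one map inducing an isomorphism on $H_1(-;\Z/p)$ (equivalently, a $\Z/p$-homology equivalence) is a "$p$-tower map," and $p$-tower maps induce bijections on $p$-structures via pullback and preserve the Witt-class invariant $\lambda$ by \cite[Theorem~?]{Cha:2007-1} (the homology cobordism / $p$-tower invariance statement there). Since $f_K$ is a $\Z$-homology equivalence, it is a fortiori a $\Z/p$-homology equivalence; and it has degree one because it is a composite of the infection map $h_K$ (degree one, being an $H_*$-iso on a 3-manifold) with the degree-one map from Ruberman's Theorem~\ref{theorem:H-cob-to-hyperbolic-3mfd}. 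So $f_K$ is a $p$-tower map, the pullback bijection on $p$-structures is immediate, and $\lambda(M_n,\phi)=\lambda(M_n',\phi')$ follows from the cited invariance.

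\textbf{The main obstacle} I anticipate is bookkeeping the extension of representations over the 4-dimensional cobordism in parts (1)--(3): one must check that the curve $\alpha$ really dies in every $p$-group quotient of $\pi_1(M)$ (this is where $\alpha\in\pi_1(M)_\omega$ is used, together with the fact that finite $p$-groups are nilpotent so any map to one factors through $\pi_1(M)/\pi_1(M)_q$), and that therefore the representation on $\pi_1(M)$ extends over the infection cobordism and then over Ruberman's cobordism. The subtlety is that these are \emph{different} cited theorems with slightly different hypotheses (e.g. $\Z/d$-homology cobordism for prime-power $d$ vs. plain homology cobordism), so each case needs its coefficient ring checked — but in all cases our $W$ is a $\Z$-homology cobordism, hence an $R$-homology cobordism for any $R$, so the hypotheses are met. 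Once the extension is in place, each conclusion is a direct citation.
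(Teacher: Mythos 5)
Your plan correctly identifies the two essential inputs — that $[\alpha]\in\pi_1(M)_\omega$ dies in every nilpotent (hence every $p$-group) quotient, and that Stallings' theorem makes $f_K$ induce isomorphisms on finite lower-central quotients — and correctly reduces to $(M(\alpha,K),h_K)$ by honest homology cobordism invariance. But the bridge from $M(\alpha,K)$ to $M$ is where the argument breaks.

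\emph{The gap for (1)--(3).} You assert that the ``standard cobordism realizing the infection'' composes with Ruberman's cobordism to give ``a homology cobordism from $\Hyp_K$ to $M$ over which the $p$-group representation extends,'' and then you cite homology cobordism invariance. But $M$ and $\Hyp_K$ are \emph{not} homology cobordant for suitable $K$ — that is precisely the content of Theorem~\ref{theorem:non-homology-cobordant-examples}. If your cobordism existed, the entire paper would collapse. The source of the error: the infection cobordism built from $E_K\times[0,1]$ and $(S^1\times D^2)\times[0,1]$ does not have boundary $M\sqcup -M(\alpha,K)$; it has a third boundary component, the zero-surgery manifold $N_K$. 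One would need a rel-boundary homology cobordism from $S^1\times D^2$ to $E_K$, which exists only when $K$ is $\Z$-slice. The correct route (the paper's) is to use, not cobordism invariance across this step, but the \emph{knot-tying formulas} from the cited references, which express the difference of invariants as the contribution of $N_K$ with the representation $\theta'$ induced on $\pi_1(N_K)\to H_1(N_K)=\Z\to G$ by $\theta(\alpha)$; since $\alpha$ dies under $\theta$, $\theta'$ is trivial and the $N_K$ contribution vanishes. This is Lemma~\ref{lemma:invariance-under-infection} in the paper, and it genuinely requires the formulas of \cite[Lemma~5.8]{Cha:2007-1} and \cite[Lemma~7.1]{Cha-Friedl:2010-01}, not merely ``cobordism invariance.''

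\emph{The gap for (4).} You claim $p$-tower maps preserve $\lambda$. They do not in general — $\lambda$ is a homology cobordism invariant designed to distinguish manifolds admitting $p$-tower maps between them, so if $p$-tower maps preserved $\lambda$ the invariant would be vacuous. What is true is that a $p$-tower map induces a bijection on $p$-structures (your first claim, correct), but equality of $\lambda$ requires more. The paper's Lemma~\ref{lemma:invariance-under-infection}(4) imposes the additional hypothesis that every component of the preimage of $\alpha$ in the cover $M_n$ lies in $\Ker\phi$, verified in the proposition's proof via nilpotence of $p$-groups (which gives $\pi_1(M_i)_\omega\subset\pi_1(M_{i+1})_\omega$), and then applies the knot-tying formula \cite[Lemma~4.6]{Cha:2007-1} at the level of covers. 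Your argument omits this hypothesis and the formula that uses it.

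In short: your reduction from $\Hyp_K$ to $M(\alpha,K)$ is correct and is the paper's first step; but the second step from $M(\alpha,K)$ to $M$ cannot go through a homology cobordism (none exists), and must instead go through the explicit knot-tying/infection formulas, using triviality of the induced representation on $N_K$.
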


\begin{proof}
  Recall that $\Hyp_K$ is homology cobordant to $M(\alpha,K)$ via a
  homology cobordism $U$ which admits a retraction $r\colon U\to
  M(\alpha,K)$, and $f_K$ is the composition $\Hyp_K\hookrightarrow W
  \xrightarrow{r} M(\alpha,K) \xrightarrow{h_K} M$.  Since the
  invariants considered in
  Proposition~\ref{proposition:p-group-invariants-do-not-detect} are
  all homology cobordism invariants as shown in
  \cite{Gilmer-Livingston:1983-1, Ruberman:1984-1, Levine:1994-1,
    Cha:2007-1, Cha-Friedl:2010-01}, we may replace $(\Hyp_K,f_K)$ by
  $(M(\alpha,K),h_K)$.  Now, since our $[\alpha]$ lies in
  $\pi_1(M)_\omega$, the image of $[\alpha]$ under any map of
  $\pi_1(M)$ into a nilpotent group is trivial.  Therefore $[\alpha]$
  lies in the kernel of the given representation/character in (1),
  (2),~(3).  

  For the tower $\{M_i\}$ of iterated covers in (4), first observe that
  if we view $\pi_1(M_{i+1})$ as a subgroup of $\pi_1(M_i)$, then
  $\pi_1(M_i)_\omega$ is contained in $\pi_1(M_{i+1})_\omega$ since
  any $p$-group~$G$ is nilpotent.  It follows that $\alpha$ always
  lifts to a loop in $M_n$ for any choice of a basepoint in $M_n$, and
  any lift of $\alpha$ lies in the kernel of the given character
  $\phi$ of $\pi_1(M_n)$.
  
  Now, Proposition~\ref{proposition:p-group-invariants-do-not-detect}
  is an immediate consequence of the following known fact, which
  essentially says that tying a knot along a curve in the kernel of
  the given representation/character does not change the concerned
  invariants.
\end{proof}

\begin{lemma}
  \label{lemma:invariance-under-infection}
  Suppose $M$ is a closed 3-manifold, $\alpha$ is a simple closed
  curve in $M$, and $M(\alpha,K)$ is the 3-manifold obtained by tying
  a knot $K$ along~$\alpha$.  Let $h_K\colon M(\alpha,K)\to M$ be the
  homology equivalence described in
  Lemma~\ref{lemma:infection-homology-equivalence}.
  \begin{enumerate}
  \item If $\phi\colon \pi_1(M)\to \Z_d$ satisfies $[\alpha]\in \Ker
    \phi$, then $\sigma(M,\phi)=\sigma(M(\alpha,K),\phi\circ
    {h_K}_*)$.
  \item If $\theta\colon\pi_1(M)\to U(k)$ satisfies $[\alpha]\in \Ker
    \theta$, then $\tilde\eta(M,\theta) =
    \tilde\eta(M(\alpha,K),\theta\circ {h_K}_*)$.
  \item If $\theta\colon \pi_1(M)\to \operatorname{GL}(\C,k)$ and
    $\psi\colon \pi_1(M) \to H$ with $H$ free abelian satisfy
    $[\alpha] \in \Ker \theta \cap \Ker\psi$, then
    $\tau^{\theta\otimes\psi}(M) = \tau^{(\theta\circ{h_K}_*) \otimes
      (\psi\circ{h_K}_*)}(M(\alpha,K))$.
  \item The map $h_K$ gives rise to a 1-1 correspondences between
    $p$-structures of $M$ and $M(\alpha,K)$ via pullback.  If
    $(\{M_i\},\phi)$ and $(\{M_i'\},\phi')$ are correponding
    $p$-structures of height $n$ for $M$ and $M(\alpha,K)$,
    respectively, and any component of the pre-image of $\alpha$ under
    $M_n \to M$ is in $\Ker\phi$, then
    $\lambda(M_n,\phi)=\lambda(M_n',\phi')$.
  \end{enumerate}
\end{lemma}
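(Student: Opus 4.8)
The plan is to deduce all four statements from satellite-type infection formulas: replacing a tubular neighborhood of $\alpha$ by the exterior $E_K$ of $K$ changes each of the invariants in question only by the corresponding invariant of the zero-surgery manifold $N_K$ of $K$, equipped with the coefficient system carrying a meridian of $K$ to the image of $[\alpha]$; and that correction vanishes precisely because $[\alpha]$ lies in the relevant kernel, so the coefficient system on $N_K$ is trivial.

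First I would fix the common geometry. Write $A=M\setminus\nu(\alpha)$, so that $M=A\cup_{T^2}(S^1\times D^2)$ and $M(\alpha,K)=A\cup_{T^2}E_K$, the gluing being the meridian-longitude identification of the statement; then ${h_K}_*$ is the identity on $\pi_1(A)$ and sends $\pi_1(E_K)$ into $\pi_1(M)$ through a homomorphism factoring as $\pi_1(E_K)\to H_1(E_K)\cong\Z$ followed by the map carrying a meridian of $K$ to a conjugate of $[\alpha]$. Since $\pi_1(E_K)$ is normally generated by that meridian and $[\alpha]$ lies in $\Ker$ of the representation or character under consideration in each of (1)--(4), the composition of the given representation with ${h_K}_*|_{\pi_1(E_K)}$ is trivial. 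The same holds for $\pi_1(S^1\times D^2)=\langle[\alpha]\rangle$. Thus the finite cover of $E_K$ (respectively of $S^1\times D^2$) determined by the pulled-back coefficient system is a disjoint union of copies of $E_K$ (respectively of $S^1\times D^2$): there is no twisting on the infecting piece.

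The core step is then a single geometric observation in four guises. One realizes the passage from $M$ to $M(\alpha,K)$ by the standard trace-of-infection cobordism $C$, obtained from $A\times[0,1]$ by attaching a cobordism rel $T^2\times[0,1]$ from $S^1\times D^2$ to $E_K$; by the previous paragraph the coefficient system extends over $C$ restricting trivially to the part not coming from $A\times[0,1]$. Each of the invariants in (1)--(3) is a signature or torsion defect that is additive under splitting along a $3$-manifold (Novikov additivity, with the Wall non-additivity term vanishing on the splitting torus; Mayer--Vietoris for torsion), so the difference between its values on $M$ and on $M(\alpha,K)$ is the corresponding relative invariant of $C$ computed with a coefficient system that is trivial on the added piece. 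Concretely this is the content of the infection computations of Gilmer--Litherland and Ruberman for the Casson--Gordon multisignature \cite{Gilmer-Livingston:1983-1,Ruberman:1984-1}, of Cochran--Orr--Teichner and Levine for the Atiyah--Patodi--Singer $\eta$-invariant \cite[Proposition~3.2]{Cochran-Orr-Teichner:2002-1} (see also \cite{Levine:1994-1}), and of \cite{Cha-Friedl:2010-01} for the twisted torsion, and in each case the ``knot correction'' vanishes: the Casson--Gordon multisignature of $N_K$ with the trivial character is zero; $\tilde\eta(N_K,\text{trivial})=k\,\eta(N_K)-k\,\eta(N_K)=0$; and the trivially twisted torsions of $E_K$ and of $S^1\times D^2$ coincide because these spaces have the same ordinary homology. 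This proves (1), (2), and (3).

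For (4) I would first check the lifting: since $[\alpha]$ maps trivially into every $p$-group, hence into the deck group of each $p$-cover $M_i\to M$, the curve $\alpha$ lifts homeomorphically to each component of its preimage in $M_i$, so the pullback tower $\{M_i'\}$ over $M(\alpha,K)$ is obtained from $\{M_i\}$ by infecting along these lifts by copies of $K$, and $h_K$ induces the asserted bijection of $p$-structures. The hypothesis that each component of the preimage of $\alpha$ in $M_n$ lies in $\Ker\phi$ then puts the top level in exactly the situation above, so the infection formula for $\lambda$ from \cite{Cha:2007-1} applies with every correction term carrying a trivial character, and the two Witt classes in $L_0(\Q(\zeta_d))$ agree. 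I expect the real work to lie in this last step and in part (3): verifying that the lift in (4) is infection by $K$ itself (not by a cable or iterate) along curves still contained in $\Ker\phi$, and pinning down the Mayer--Vietoris behaviour of the twisted torsion over a non-semisimple coefficient ring. The vanishing of the correction terms, once one is in this setting, is immediate from the absence of twisting on the infecting solid torus.
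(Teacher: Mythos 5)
Your overall strategy matches the paper's: reduce each invariant to a knot-tying (infection) formula of the form ``invariant of $M(\alpha,K)$ = invariant of $M$ + correction from the zero-surgery manifold $N_K$,'' and then show the correction vanishes because the pulled-back coefficient system on $N_K$ is trivial. The paper proves (1)$\Leftarrow$(2) and then for (2)--(4) simply cites the satellite formulas from \cite{Cha:2007-1} and \cite{Cha-Friedl:2010-01}; your extra geometric explanation via the trace-of-infection cobordism and additivity is essentially the content of those cited lemmas, so this is the same route made more explicit.

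There is, however, a genuine gap in your treatment of~(4). You assert ``since $[\alpha]$ maps trivially into every $p$-group, hence into the deck group of each $p$-cover $M_i\to M$, the curve $\alpha$ lifts homeomorphically,'' but Lemma~\ref{lemma:invariance-under-infection} does not assume anything of the sort --- that fact holds only in the paper's application, where $[\alpha]\in\pi_1(M)_\omega$. The lemma as stated only assumes that each component of the preimage of $\alpha$ in the \emph{top} cover $M_n$ lies in $\Ker\phi$. In general $\alpha$ need not lift homeomorphically through the intermediate covers, so the pullback tower $\{M_i'\}$ is not obtained by infecting with copies of $K$ along homeomorphic lifts of $\alpha$; some stages may involve cables of~$K$. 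The 1-1 correspondence between $p$-structures follows from $h_K$ being a $\Z/p$-homology equivalence alone (this is \cite[Lemma~3.7]{Cha:2007-1}), without your lifting argument, and the equality of $\lambda$ is obtained by applying the knot-tying formula \cite[Lemma~4.6]{Cha:2007-1} in its general form. As written, your argument proves only the special case where $[\alpha]$ dies in every $p$-group quotient of $\pi_1(M)$ --- enough for the paper's application, but not for the lemma as stated. You should either prove only that special case explicitly, or appeal to the cited formulas for the general situation.
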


\begin{proof}
  It is known that the multisignature obtained from $\Z_d$-valued
  characters are equivalent to $\eta$-invariants associated to
  representations factoring through $\Z_d$.  From this it follows that
  (1) is a consequence of~(2).

  The proof for (2) is similar to the arguments in
  \cite[Section~5.4]{Cha:2007-1}: for the knot tying operation, it is
  known (e.g., see \cite[Lemma~5.8]{Cha:2007-1}) that
  $\tilde\eta(\Hyp_K,\theta\circ{f_K}_*) =
  \tilde\eta(M,\theta)+\tilde\eta(N_K,\theta')$ where $N_K$ is the
  zero-surgery manifold of the knot $K$, and $\theta'\colon
  \pi_1(N_K)\to U(k)$ is the representation that factors through
  $H_1(N_K)=\Z$ and sends a meridian of $K$ to $\theta(\alpha)$.  In
  our case, $\theta'$ is the trivial representation, since
  $[\alpha]\in \pi_1(M)_\omega$ and $\pi_1(M)$ factors through a
  nilpotent group.  Therefore $\tilde\eta(N_K,\theta')=0$.

  (3) is shown by an argument similar to the above proof for (2),
  using the knot tying formula given in
  \cite[Lemma~7.1]{Cha-Friedl:2010-01} in place
  of~\cite[Lemma~5.8]{Cha:2007-1}.  For (4), by
  \cite[Lemma~3.7]{Cha:2007-1}, $h_K$ induces a 1-1 correspondence
  between $p$-structures.  Computing $\lambda(M_n',\phi')$ using the
  knot tying formula given in \cite[Lemma~4.6]{Cha:2007-1}, we obtain
  the desired conclusion.
\end{proof}

Now we show that $L^2$-signature invariants associated to torsion-free
coefficient systems do not distinguish the homology cobordism class of
$M$ from that of $\Hyp_K$ (which is equal to that of $M(\alpha,K)$).
First we consider the special case of coefficient systems factoring
through the homology localization.

\begin{proposition}
  \label{proposition:local-torsion-free-L2-sign-does-not-detect}
  \begin{enumerate}
  \item If $\phi\colon \pi_1(M) \to G$ is a homomorphism factoring
    through $\widehat{\pi_1(M)}$ and $G$ is torsion-free, then
    $\rhot(M,\phi)=\rhot(M(\alpha,K),\phi\circ {h_K}_*)$.
  \item For Harvey's homology cobordism invariant $\rho_n(-)$ defined
    in \cite{Harvey:2006-1}, we have $\rho_n(M) = \rho_n(M(\alpha,K))
    = \rho_n(\Hyp_K)$ for any $n$ and~$K$.
  \end{enumerate}
\end{proposition}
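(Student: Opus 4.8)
The plan is to establish (1) directly from the satellite (infection) formula for $L^2$-$\rho$-invariants, exploiting that $\alpha$ becomes nullhomotopic in every coefficient system of the stated form, and then to deduce (2) from (1) by identifying Harvey's $\rho_n$ with an $L^2$-signature defect over a coefficient system that factors through the homology localization and has torsion-free target.

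For (1): since $[\alpha]$ is local hidden torsion of $\pi_1(M)$ (Lemma~\ref{lemma:hidden-local-torsion-in-seed-mfd}), Definition~\ref{definition:local-hidden-torsion} tells us that the image of $[\alpha]$ under $\pi_1(M)\to\widehat{\pi_1(M)}=\widehat\Gamma$ has nontrivial finite order (in fact order $p$). As $\phi$ factors through $\widehat{\pi_1(M)}$ and $G$ is torsion-free, it follows that $\phi([\alpha])=1$, i.e.\ $[\alpha]\in\Ker\phi$. Then, exactly as in the proof of Lemma~\ref{lemma:computation-of-L2sign}, the satellite formula \cite[Proposition~3.2]{Cochran-Orr-Teichner:2002-1} (see also \cite[Lemma~2.3]{Cochran-Harvey-Leidy:2009-1}) gives
\[
\rhot(M(\alpha,K),\phi\circ{h_K}_*)=\rhot(M,\phi)+\rhot(N_K,\phi'),
\]
where $N_K$ is the zero-surgery manifold of $K$ and $\phi'\colon\pi_1(N_K)\to G$ factors through $H_1(N_K)=\Z$, sending a meridian of $K$ to $\phi([\alpha])=1$. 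Hence $\phi'$ is trivial, $\rhot(N_K,\phi')=0$, and (1) follows.

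For (2): recall that $\rho_n(M)=\rhot(M,\psi_n)$, where $\psi_n\colon\pi_1(M)\to\pi_1(M)/\pi_1(M)^{(n+1)}_r$ is the projection onto the quotient by the $(n+1)$-st term $\pi_1(M)^{(n+1)}_r$ of the rational derived series, and that this quotient is PTFA, hence torsion-free. The localization map $\pi_1(M)\to\widehat{\pi_1(M)}$ is a direct limit of integral homology $2$-connected maps \cite[Theorem~2.6]{Cha:2004-1}, hence is itself $\Q$-homology $2$-connected, so by the functoriality of the rational derived series under such maps \cite{Harvey:2006-1} (see also \cite{Cochran-Harvey:2004-1}) it induces an isomorphism $\pi_1(M)/\pi_1(M)^{(n+1)}_r\cong\widehat{\pi_1(M)}/\widehat{\pi_1(M)}^{(n+1)}_r$; in particular $\psi_n$ factors through $\widehat{\pi_1(M)}$. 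Since its target is torsion-free, part (1) applies with $\phi=\psi_n$ and yields $\rhot(M(\alpha,K),\psi_n\circ{h_K}_*)=\rhot(M,\psi_n)=\rho_n(M)$. The same functoriality, applied now to the homology equivalence $h_K$, shows that ${h_K}_*$ induces an isomorphism $\pi_1(M(\alpha,K))/\pi_1(M(\alpha,K))^{(n+1)}_r\cong\pi_1(M)/\pi_1(M)^{(n+1)}_r$, so $\psi_n\circ{h_K}_*$ coincides with the intrinsic projection defining $\rho_n(M(\alpha,K))$ up to an isomorphism of the target; as $\rhot$ is unchanged by post-composition with such an isomorphism, $\rhot(M(\alpha,K),\psi_n\circ{h_K}_*)=\rho_n(M(\alpha,K))$. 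Therefore $\rho_n(M)=\rho_n(M(\alpha,K))$. Finally, $\Hyp_K$ is homology cobordant to $M(\alpha,K)$ and $\rho_n$ is a homology cobordism invariant \cite{Harvey:2006-1}, so $\rho_n(\Hyp_K)=\rho_n(M(\alpha,K))=\rho_n(M)$.

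The conceptual core, and the place demanding care, is the tension between ``$[\alpha]$ has infinite order in $\pi_1(M)$'' and ``its image in $\widehat{\pi_1(M)}$ is $p$-torsion'': the satellite operation along a curve that becomes trivial in the chosen coefficient system leaves $\rhot$ unchanged, and $[\alpha]$ does become trivial in every torsion-free quotient of $\widehat{\pi_1(M)}$, and a fortiori in every PTFA quotient such as $\widehat{\pi_1(M)}/\widehat{\pi_1(M)}^{(n+1)}_r$. I expect the only genuine technical obstacle to be the correct invocation of the rational-derived-series functoriality: it must be applied to the two $\Q$-homology $2$-connected maps in play, namely the localization map and ${h_K}_*$, and one must use that $\rho_n$ depends on the rational derived quotient only up to isomorphism in order to replace $\psi_n\circ{h_K}_*$ by the intrinsic projection for $M(\alpha,K)$.
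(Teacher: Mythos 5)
Your proof of part (1) is correct and essentially identical to the paper's: you apply the satellite formula, observe that $\phi([\alpha])=1$ because $[\alpha]$ has torsion image in $\widehat{\pi_1(M)}$ while $G$ is torsion-free, and conclude $\rhot(N_K,\phi')=0$.

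Part (2) has a genuine gap. You assert that the localization map $\pi_1(M)\to\widehat{\pi_1(M)}$ induces an \emph{isomorphism} on the derived-series quotients, and use this to claim that the Harvey projection $\psi_n$ itself factors through $\widehat{\pi_1(M)}$. But the Cochran--Harvey result invoked here gives only a \emph{monomorphism} $j\colon\pi_1(M)/\pi_1(M)_H^{(n)}\hookrightarrow\widehat{\pi_1(M)}/\widehat{\pi_1(M)}_H^{(n)}$ for a rationally $2$-connected map (and it is stated for the \emph{torsion-free} derived series $(-)_H^{(n)}$ of \cite{Cochran-Harvey:2004-1}, not the rational derived series $(-)_r^{(n)}$ — Harvey's $\rho_n$ is defined via the former). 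The localization map is not surjective (e.g.\ $\widehat{\pi_1(M)}\cong\widehat\Gamma$ contains $\Z_{(2)}/\Z$ and $(\Z_{(2)}^t)^2$ while $\pi_1(M)$ is finitely generated), so there is no reason the induced map on quotients should be onto, and $\psi_n$ does \emph{not} factor through $\widehat{\pi_1(M)}$ — only $j\circ\psi_n$ does. To repair this one must invoke the $L^2$-induction property for $\rhot$: post-composing the coefficient representation with an injection leaves $\rhot$ unchanged, so $\rho_n(M)=\rhot(M,\psi_n)=\rhot(M,j\circ\psi_n)$, and then part (1) applies to $\phi=j\circ\psi_n$. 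This is exactly the step the paper performs. Your treatment of the $h_K$ step is fine (there surjectivity of ${h_K}_*$ combined with the injectivity theorem really does give an isomorphism on quotients), as is the final reduction to $\Hyp_K$ via homology cobordism invariance of $\rho_n$.
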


\begin{proof}
  As in the proof of Lemma~\ref{lemma:computation-of-L2sign}, we have
  \[
  \rhot(M(\alpha,K),\phi\circ {h_K}_*) = \rhot(M,\phi)+
  \rhot(N_K,\phi')
  \]
  where $N_K$ is the zero-surgery manifold of $K$ and $\phi'\colon
  \pi_1(N_K) \to G$ is the map that factors through $H_1(N_K)=\Z$ and
  sends any meridian of $K$ to~$\phi([\alpha])$.  In our case,
  $\phi([\alpha])$ is trivial because of the following facts: the
  image of $[\alpha]$ in $\widehat{\pi_1(M)}$ is torsion, $\phi$
  factors through $\widehat{\pi_1(M)}$, and $G$ is torsion-free.  That
  is, $\phi'$ is a trivial map.  It follows that $\rhot(N_K,\phi')=0$.
  This shows~(1).

  In \cite{Harvey:2006-1}, $\rho_n(M)$ is defined to be
  $\rhot(M,\phi_n)$ where $\phi_n\colon \pi_1(M)\to
  \pi_1(M)/\pi_1(M)_H^{(n)}$ is the quotient map.  Here
  $\pi_1(M)_H^{(n)}$ denotes the torsion-free derived series defined
  in~\cite{Cochran-Harvey:2004-1}.  Due to \cite{Cha:2004-1}, there is
  a canonical injection
  \[
  j\colon \pi_1(M)/\pi_1(M)_H^{(n)} \to
  \widehat{\pi_1(M)}/\widehat{\pi_1(M)}_H^{(n)}.
  \]
  By the
  $L^2$-induction property for $\rhot$, $\rho_n(M)$ is equal to
  $\rhot(M,j\circ\phi_n)$.  Since $j\circ\phi_n$ factors through
  $\widehat{\pi_1(M)}$ and $h_K$ induces an isomorphism on
  $\widehat{\pi_1(-)}$, (2) follows from~(1).
\end{proof}

\begin{proposition}
  \label{proposition:torsion-free-L2-sign-does-not-detect}
  Suppose $K$ is a knot satisfying $\int_{S^1} \sigma_K(w) \, dw = 0$.
  If $G$ is a torsion-free group, then for any $\phi\colon \pi_1(M)\to
  G$, we have $\rhot(M,\phi)=\rhot(M(\alpha,K),\phi\circ {h_K}_*)$.
\end{proposition}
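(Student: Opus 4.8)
The plan is to reduce the statement, via the knot-infection formula already used in Lemma~\ref{lemma:computation-of-L2sign}, to the vanishing of a single $L^2$-signature of the zero-surgery manifold, and then to handle that by splitting into cases according to the order of the image of $[\alpha]$.

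First I would apply the infection formula of \cite[Proposition~3.2]{Cochran-Orr-Teichner:2002-1} (see also \cite[Lemma~2.3]{Cochran-Harvey-Leidy:2009-1}): for any $\phi\colon \pi_1(M)\to G$ we have
\[
\rhot(M(\alpha,K),\phi\circ {h_K}_*) = \rhot(M,\phi) + \rhot(N_K,\phi'),
\]
where $N_K$ is the zero-surgery manifold of $K$ and $\phi'\colon \pi_1(N_K)\to G$ factors through $H_1(N_K)=\Z$, sending a meridian of $K$ to $g:=\phi([\alpha])$. Thus it suffices to show $\rhot(N_K,\phi')=0$. In contrast to Proposition~\ref{proposition:local-torsion-free-L2-sign-does-not-detect}, we may no longer conclude that $g$ is trivial, since $\phi$ need not factor through $\widehat{\pi_1(M)}$; the torsion-freeness hypothesis on $G$ must be used here instead.

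The key observation is that since $G$ is torsion-free, $g$ is either trivial or of infinite order. If $g=1$, then $\phi'$ is the trivial homomorphism, and $\rhot(N_K,\phi')=0$ because the $L^2$-signature defect of a closed $3$-manifold with trivial coefficient system vanishes. If $g$ has infinite order, then $\phi'$ is the composite of the abelianization $\psi\colon \pi_1(N_K)\to H_1(N_K)=\Z$ with the injection $\Z\cong\langle g\rangle\hookrightarrow G$ carrying $1$ to $g$; by the $L^2$-induction property of $\rhot$ (as invoked in Lemma~\ref{lemma:computation-of-L2sign}, cf.\ \cite[Lemma~8.7]{Cha-Orr:2009-01}) this gives $\rhot(N_K,\phi')=\rhot(N_K,\psi)$. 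It is standard that the $L^2$-signature defect over the abelianization of the zero-surgery manifold is the integral of the Levine-Tristram signature, $\rhot(N_K,\psi)=\int_{S^1}\sigma_K(w)\,dw$ (see \cite{Cochran-Orr-Teichner:2002-1}), which is $0$ by hypothesis. Hence $\rhot(N_K,\phi')=0$ in both cases, and the proposition follows from the displayed formula.

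This is an assembly of known facts rather than a computation with a genuine obstacle; the one new point relative to Proposition~\ref{proposition:local-torsion-free-L2-sign-does-not-detect} is extracting the dichotomy ``trivial or infinite order'' from torsion-freeness of $G$, after which the infinite-order case collapses by $L^2$-induction to the classical signature-integral formula. The only delicate bookkeeping is checking that the $L^2$-induction property applies to the injection $\langle g\rangle\hookrightarrow G$, which it does.
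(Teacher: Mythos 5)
Your proof is correct and takes essentially the same route as the paper: reduce via the infection formula to $\rhot(N_K,\phi')$, split into cases via the torsion-free dichotomy, and identify the infinite-order case with the integral of the Levine--Tristram signature (the paper cites \cite[Proposition~5.1]{Cochran-Orr-Teichner:2002-1} directly, which already packages the $L^2$-induction step you spell out). There is no substantive difference.
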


\begin{proof}
  As in the proof of
  Proposition~\ref{proposition:local-torsion-free-L2-sign-does-not-detect},
  we have
  \[
  \rhot(M(\alpha,K),\phi\circ{h_K}_*) = \rhot(M,\phi)+ \rhot(N_K,\phi')
  \]
  Since $G$ is torsion-free, $\phi'$ is either trivial or onto the
  infinite cyclic subgroup generated by~$\phi([\alpha])$.  In the
  former case, $\rhot(N_K,\phi')=0$.  In the latter case,
  $\rhot(N_K,\phi')=\int_{S_1} \sigma_K(w)\,dw$ by
  \cite[Proposition~5.1]{Cochran-Orr-Teichner:2002-1}.  From this the
  conclusion follows.
\end{proof}

Now, Theorem~\ref{theorem:intro-main} in the introduction follows from
our results in this section: for the hyperbolic 3-manifolds $\Hyp_{K_i}$
given in Theorem~\ref{theorem:non-homology-cobordant-examples}, it
follows that Theorem~\ref{theorem:intro-main} (1), (2), (3), and (4)
hold, respectively, from
Proposition~\ref{proposition:common-properties-of-examples},
Proposition~\ref{proposition:torsion-free-L2-sign-does-not-detect},
Proposition~\ref{proposition:p-group-invariants-do-not-detect}, and
Theorem~\ref{theorem:non-homology-cobordant-examples}.


\appendix

\section{Local groups and local modules}
\label{section:local-groups-and-local-modules}

For completeness, we prove a result about local groups and modules used in Section~\ref{section:computation-for-torus-bundle-group} as Proposition~\ref{proposition:hat-Gamma-is-local}.

We excerpt contents in this appendix from a manuscript in preparation
by the authors~\cite{Cha-Orr:2003-01}, which gives a much more
thorough treatment of the theory of localization of spaces, groups and
modules and its role in the study of manifolds and knots.

Throughout, $R$ denotes a commutative ring with unity.

\begin{definition}
  \label{definition:local-module}
  A module $A$ over the group ring $RG$ is called a \emph{Cohn local
    module} if the following holds: for any diagram
  \[
  \begin{diagram}
    \node{F_0} \arrow{e,t}{\alpha}\arrow{s,l}{f} \node{F_1}
    \\
    \node{A}
  \end{diagram}
  \]
  with $F_0$, $F_1$ finitely generated free $RG$-modules of the same
  rank and $\alpha$ a homomorphism inducing an isomorphism $1_R\otimes
  \alpha\colon R\otimes_{RG}F \to R\otimes_{RG}F$, there is a unique
  homomorphism $g\colon F_1 \to A$ making the diagram commute, i.e.,
  $f=g\alpha$.
\end{definition}

For a discussion related to the above definition, see \cite[Section~2
and Appendix~A]{Cha-Orr:2009-01}.

\begin{theorem}
  \label{theorem:extension-of-local-group-by-local-module}
  Suppose $\Gamma$ is an $R$-local group and $A$ is a Cohn local module
  over~$R\Gamma$.  Given any extension as below,
  \[
  0\to A\to \tilde\Gamma \to \Gamma \to 1
  \]
  then $\tilde\Gamma$ is an $R$-local group.
\end{theorem}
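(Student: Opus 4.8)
The plan is to verify the equation‑theoretic form of $R$‑locality: it is equivalent (see \cite{Cha:2004-1}, and cf.\ Lemma~\ref{lemma:extension-by-equation}) to check that every finite system of $R$‑nullhomologous equations over $\tilde\Gamma$ has a unique solution in $\tilde\Gamma$. So let $S=\{z_i^e=w_i(z_1,\dots,z_n)\}_{i=1}^n$ be such a system, with $e\in d(R)$, each $w_i$ in the commutator‑type kernel, and possibly involving fixed ``constants'' from $\tilde\Gamma$. First I would push $S$ forward along $\tilde\Gamma\to\Gamma$ to a system $\bar S$ over $\Gamma$; since the nullhomology condition concerns only the variables, $\bar S$ is again $R$‑nullhomologous, so by $R$‑locality of $\Gamma$ it has a unique solution $(\bar z_1,\dots,\bar z_n)$. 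Fix arbitrary lifts $\hat z_i\in\tilde\Gamma$ of $\bar z_i$. Any solution $(z_i)$ of $S$ projects to a solution of $\bar S$, hence $z_i=c_i\hat z_i$ for uniquely determined $c_i\in A$, and solutions of $S$ correspond bijectively to tuples $(c_i)\in A^n$; so the task reduces to solving uniquely for $(c_i)$.

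Next I would linearize. Using the standard Fox‑calculus identity for a group extension with abelian kernel, $w(c_\bullet\hat z_\bullet)\cdot w(\hat z_\bullet)^{-1}=\sum_j\bigl(\overline{\partial w/\partial z_j}\bigr)\cdot c_j$ in $A$ — where $\overline{(\ \cdot\ )}$ denotes the image in $R\Gamma$ of the Fox derivative under $z_\ell\mapsto\bar z_\ell$ (constants mapping to their images), and this expansion is \emph{exact}, not merely first order — together with the analogous $(c_i\hat z_i)^e\cdot\hat z_i^{-e}=(1+\bar z_i+\cdots+\bar z_i^{\,e-1})\cdot c_i$, the system $S$ becomes equivalent to an $R\Gamma$‑linear system $M\,c=a$. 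Here $a\in A^n$ records the defect $\hat z_i^{\,e}\,w_i(\hat z_\bullet)^{-1}$ of the chosen lifts, and $M=(M_{ij})$ is the $n\times n$ matrix over $R\Gamma$ with $M_{ij}=\overline{\partial w_i/\partial z_j}-\delta_{ij}(1+\bar z_i+\cdots+\bar z_i^{\,e-1})$. Applying the augmentation $\varepsilon\colon R\Gamma\to R$ annihilates each $\varepsilon(\overline{\partial w_i/\partial z_j})$ — this is exactly the hypothesis that $w_i$ is $R$‑nullhomologous — so $\varepsilon(M)=-e\cdot I_n$, which is invertible over $R$ because $e\in d(R)$ forces $e$ to be a unit of $R$. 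Thus $M$, regarded as an endomorphism of the finitely generated free $R\Gamma$‑module $(R\Gamma)^n$, induces an isomorphism after $R\otimes_{R\Gamma}-$.

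Now the Cohn‑local hypothesis on $A$ applies directly: $M$ is precisely an endomorphism of a finitely generated free $R\Gamma$‑module inducing an isomorphism after $R\otimes_{R\Gamma}-$, so by Definition~\ref{definition:local-module} the induced map $\Hom_{R\Gamma}\!\bigl((R\Gamma)^n,A\bigr)\to\Hom_{R\Gamma}\!\bigl((R\Gamma)^n,A\bigr)$, i.e.\ the endomorphism $c\mapsto M\,c$ of $A^n$, is a bijection. Hence $M\,c=a$ has a unique solution $c\in A^n$, and $(c_i\hat z_i)$ is the unique solution of $S$ in $\tilde\Gamma$. This proves $\tilde\Gamma$ is $R$‑local.

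The step I expect to require the most care is the linearization: one must check that over the abelian normal subgroup $A$ the Fox‑calculus expansion of each $w_i$ (and of $x^e$) is an \emph{exact} identity in $\tilde\Gamma$, so that cancelling the common right factor $w_i(\hat z_\bullet)$ leaves an honest linear equation in the $c_j$ rather than an approximate one, and that the augmentation of the resulting matrix is correctly identified with $-e\cdot I_n$ from the $R$‑nullhomology of $S$ and the unit $e$. The only other point needing comment is the reliance on the equation‑theoretic reformulation of Definition~\ref{definition:homology-localization}; if one insists on arguing from $\Omega^R$, one factors a given $\alpha\in\Omega^R$ through $\pi_S$ via Lemma~\ref{lemma:extension-by-equation}, the argument above treats the map $\pi\to\pi_S$, and the residual surjection $\pi_S\twoheadrightarrow G$ is handled by the same module‑level locality of $A$.
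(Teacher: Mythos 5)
Your proof takes a genuinely different route from the paper's, and it is worth comparing the two. The paper works directly with a map $\alpha\colon\pi\to G$ in $\Omega^R$ and a map $f\colon\pi\to\tilde\Gamma$: it pulls back the extension $0\to A\to\tilde\Gamma\to\Gamma\to 1$ along $pf$ and along the induced $g'\colon G\to\Gamma$ to obtain extensions $\tilde\pi\to\pi$ and $\tilde G\to G$, observes that $f$ splits the former, and then uses Lemma~\ref{lemma:chain-contraction-argument} (Vogel's partial chain contraction) to conclude that $\alpha^*$ is injective on $H^2(-;A)$ and an isomorphism on $H^1(-;A)$; injectivity on $H^2$ kills the obstruction to splitting $\tilde G\to G$, and the $H^1$ isomorphism matches splittings uniquely. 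Your argument instead linearizes the lifting problem into an explicit $R\Gamma$-matrix equation $Mc=a$ over $A$ and invokes Cohn locality directly. The linearization itself is correct: the Fox-calculus identity over an abelian normal subgroup is exact, the diagonal contribution from $z_i^e$ is $1+\bar z_i+\cdots+\bar z_i^{e-1}$, the off-diagonal augmentations vanish because the equations are $R$-nullhomologous, and $\varepsilon(M)=-e I_n$ is a unit matrix over $R$ because $e\in d(R)$. Conceptually your existence and uniqueness of $c$ correspond exactly to the paper's $H^2$-injectivity and $H^1$-isomorphism; your approach essentially inlines the chain-contraction argument for the specific two-term complex $M\colon (R\Gamma)^n\to(R\Gamma)^n$, which is a nice, more hands-on way to see what Lemma~\ref{lemma:chain-contraction-argument} is doing.

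However, there is a real gap in the framing. Definition~\ref{definition:homology-localization} defines $R$-locality as a universal lifting property against \emph{all} $\alpha\in\Omega^R$, and you quietly replace this with the property of uniquely solving systems of $R$-nullhomologous equations. These are not a priori the same: Lemma~\ref{lemma:extension-by-equation}(2) only factors a given $\alpha\colon\pi\to G$ as $\pi\to\pi_S\twoheadrightarrow G$, and solving the equation system in $\tilde\Gamma$ produces a unique extension $h\colon\pi_S\to\tilde\Gamma$ of $f$, but gives no reason why $h$ should kill $\Ker(\pi_S\twoheadrightarrow G)$. The composite $p\circ h$ does factor through $G$ (since $\Gamma$ is local), so $h(\Ker)$ lands in $A$; but showing $h(\Ker)=0$ in $A$ is precisely the content of $H^2$-injectivity for the \emph{surjective} 2-connected map $\pi_S\to G$, and that does not reduce to a square matrix with invertible augmentation — the kernel is normally generated, not freely, and there is no canonical linear system of the form $Mc=a$ there. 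Your closing sentence ("the residual surjection $\pi_S\twoheadrightarrow G$ is handled by the same module-level locality of $A$") is exactly where this gap lives, and it is not a triviality; it is the half of the argument that the paper resolves by appealing to Lemma~\ref{lemma:chain-contraction-argument} applied to the relative complex of the pair $(G,\pi_S)$, which handles injective and surjective 2-connected maps uniformly. If you want to keep the equation-theoretic framing, you must either cite a precise theorem asserting that uniquely solving $R$-nullhomologous equations implies $R$-locality in the sense of Definition~\ref{definition:homology-localization} (not merely the normal-closure/algebraically-closed variant), or supply the surjective-case argument — at which point you will essentially be reproving the $H^2$/$H^1$ step from Lemma~\ref{lemma:chain-contraction-argument}.
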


One proves the following result, used in the proof of
Theorem~\ref{definition:local-module}, by a standard partial
chain contraction argument originally due to Vogel.  (See also~\cite{Smith:1978-1}, where a special case is proven using a different, but related, method.)

\begin{lemma}[Vogel~\cite{Vogel:1982-1}]
  \label{lemma:chain-contraction-argument}
  Suppose $A$ is a Cohn local $RG$-module and $C_*$ is a chain complex
  over $RG$ with $C_i$ finitely generated free for $i\le n$.  If
  $H_i(R\otimes_{RG} C_*)=0$ for $i\le n$, then $H_i(A\otimes_{RG}
  C_*)=0=H^i(\Hom(C_*,A))$ for $i\le n$.
\end{lemma}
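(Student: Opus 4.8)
The plan is to prove Lemma~\ref{lemma:chain-contraction-argument} by the standard \emph{partial chain contraction} technique, which exploits that a Cohn local module behaves, with respect to finitely generated free $RG$-modules and $R$-homology isomorphisms between them, exactly the way $R$ behaves with respect to the trivial module structure. The key observation is that the defining property of a Cohn local module $A$ can be rephrased: if $\alpha\colon F_0\to F_1$ is a map of finitely generated free $RG$-modules which becomes an isomorphism after applying $R\otimes_{RG}-$, then $\alpha^*\colon \Hom_{RG}(F_1,A)\to\Hom_{RG}(F_0,A)$ is a bijection. So the first step is to record this reformulation, and to extend it from single maps to the statement that for such an $\alpha$ the induced maps $A\otimes_{RG}F_0\to A\otimes_{RG}F_1$ and $\Hom_{RG}(F_1,A)\to\Hom_{RG}(F_0,A)$ are isomorphisms (the tensor statement following by a dual/adjunction argument, or directly since $\alpha$ has an $R$-homology inverse that one promotes to an honest $RG$-module splitting up to the relevant homology).

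The heart of the argument is the following. Truncate $C_*$ in degrees $\le n$ and, using that $H_i(R\otimes_{RG}C_*)=0$ for $i\le n$, build over $RG$ a \emph{partial chain contraction}: $RG$-homomorphisms $s_i\colon C_i\to C_{i+1}$ for $i<n$ such that $\partial s + s\partial = \mathrm{id}$ in the appropriate range \emph{after tensoring with $R$}, and then — and this is where Cohn locality enters — correct the $s_i$ so that the contraction identity holds over $RG$ up to an error supported above degree $n$. More precisely, I would proceed inductively on degree: assuming a partial contraction is constructed through degree $i-1$, the obstruction to extending it to degree $i$ is a cycle in a complex of $RG$-homomorphisms between the finitely generated free modules $C_j$ ($j\le n$); since this obstruction vanishes after $\otimes_{RG}R$ by hypothesis, the Cohn local property (in the form that $\Hom_{RG}(-,A)$ sees no difference between $C_*$ and $R\otimes_{RG}C_*$ in this range) lets us solve for the needed homomorphism with values in $A$. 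Carrying this through produces an actual contraction of $A\otimes_{RG}C_*$ and of $\Hom_{RG}(C_*,A)$ in degrees $\le n$, which is exactly the vanishing $H_i(A\otimes_{RG}C_*)=0=H^i(\Hom_{RG}(C_*,A))$ claimed.

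The main obstacle I anticipate is the bookkeeping in the inductive step: one must be careful that the finite generation and freeness of $C_i$ for $i\le n$ (and nothing about $C_i$ for $i>n$) is used exactly where needed, and that the ``error terms'' pushed above degree $n$ genuinely do not interfere with the homology computation in the stated range. A clean way to organize this is to first reduce to the case where $C_*$ is concentrated in degrees $0$ through $n+1$ (replacing $C_*$ by a truncation $\tau_{\le n+1}C_*$, which changes neither $H_i(R\otimes_{RG}C_*)$ nor $H_i(A\otimes_{RG}C_*)$ nor $H^i(\Hom_{RG}(C_*,A))$ for $i\le n$), and then to quote Vogel's argument~\cite{Vogel:1982-1} for the existence of the partial contraction, indicating only how the Cohn local hypothesis replaces the hypothesis of working over $R$ itself. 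Since the excerpt attributes the lemma to Vogel and explicitly says it is proven ``by a standard partial chain contraction argument,'' I would keep the write-up brief, emphasizing the reformulation of Cohn locality in step one and the inductive obstruction-killing in step two, and referencing~\cite{Vogel:1982-1} and~\cite[Appendix~A]{Cha-Orr:2009-01} for the routine details.
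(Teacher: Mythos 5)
The paper omits this proof entirely, attributing the lemma to Vogel and citing \cite{Vogel:1982-1}, \cite{Levine:1994-1}, and \cite{Cochran-Orr-Teichner:1999-1} for the standard partial-chain-contraction argument; your sketch follows that same intended route. One small reorganization worth noting: the cleaner version builds $RG$-linear maps $s_i\colon C_i\to C_{i+1}$ for $i\le n$ with $\Delta_i:=\partial_{i+1} s_{i}+s_{i-1}\partial_i$ reducing to the identity after $\otimes_{RG}R$, and invokes Cohn locality only at the end to conclude that $\mathrm{id}_A\otimes\Delta_i$ and $\Hom_{RG}(\Delta_i,\mathrm{id}_A)$ are automorphisms of $A\otimes_{RG}C_i$ and $\Hom_{RG}(C_i,A)$ (whence the homology and cohomology vanish, since $\Delta$ is null-homotopic), rather than trying to solve for $A$-valued data inside the inductive step as your phrasing suggests.
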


Since the argument has been presented and used in several papers (e.g,
see \cite[Section~9.2]{Vogel:1982-1}, \cite[Proof of Propositon~3.2 on
p.~95]{Levine:1994-1}, \cite[Proof of
Proposition~2.10]{Cochran-Orr-Teichner:1999-1}), we omit the
proof of Lemma~\ref{lemma:chain-contraction-argument}.

\begin{proof}[Proof of
  Theorem~\protect{\ref{theorem:extension-of-local-group-by-local-module}}]

  Suppose
  \[
  0\to A \to \tilde\Gamma \xrightarrow{p} \Gamma \to 1
  \]
  is exact, where $\Gamma$ is an $R$-local group and $A$ is a Cohn
  local module over~$R\Gamma$.  Suppose $\alpha\colon \pi\to G$ is a group
  homomorphism in $\Omega^R$, that is, $\pi$ and $G$ finitely presented and $\alpha$ is $R$-homology 2-connected. Suppose $f\colon \pi
  \to \tilde\Gamma$ is given.  We will show there is a unique $g\colon
  G\to \tilde \Gamma$ satisfying $f=g\alpha$.

  Let $f'=pf$.  Since $\Gamma$ is local, there is a unique $g'\colon
  G\to \Gamma$ such that $f'=\alpha g'$.  Taking pullback of $f'$ and
  $g'$ along $\tilde\Gamma\to \Gamma$, we obtain $\tilde\pi \to
  \tilde\Gamma$ and $\tilde G\to \tilde\Gamma$ which, together with other obvious
  maps, give the following commutative diagram with exact rows:
  \[
  \begin{diagram}\dgHORIZPAD=1.2ex
    \node[2]{\hbox to0mm{\hss$0\to$\hskip.5\dgHORIZPAD}A} \arrow[2]{e}\arrow{s,=}
    \node[2]{\tilde G} \arrow[2]{e}\arrow{s,-}
    \node[2]{G\hbox to 0mm{\hskip.5\dgHORIZPAD$\to 1$\hss}}
    \arrow[2]{s,r}{g'}
    \\
    \node{\hbox to0mm{\hss$0\to$\hskip.5\dgHORIZPAD}A}
    \arrow[2]{e}\arrow{se,=}\arrow{ne,=}
    \node{}\arrow{s,=}
    \node{\tilde \pi} \arrow[2]{e}\arrow{ne}\arrow{se}
    \node{} \arrow{s}
    \node{\pi\hbox to 0mm{\hskip.5\dgHORIZPAD$\to 1$\hss}}
    \arrow{ne,t}{\alpha} \arrow{se,b}{\smash{f'}\,} \arrow{sw,b}{\smash{f}}
    \\
    \node[2]{\hbox to0mm{\hss$0\to$\hskip.5\dgHORIZPAD}A} \arrow[2]{e}
    \node[2]{\tilde \Gamma} \arrow[2]{e,b}{p}
    \node[2]{\Gamma\hbox to 0mm{\hskip.5\dgHORIZPAD$\to 1$\hss}}
  \end{diagram}
  \]

  By the universal property of pullback, $f$ gives rise to a splitting
  $s\colon \pi\to \tilde\pi$.  Similarly, homomorphisms $g\colon G\to
  \tilde\Gamma$ satisfying $f=g\alpha$ are in 1-1 correspondence with
  splittings $G\to \tilde G$ which is compatible with~$s$.
  
  Let $c \in H^2(\Gamma;A)$ be the element which correponds to the
  extension $\tilde\Gamma$ of~$\Gamma$.  Then, the above extensions
  $\tilde\pi$ and $\tilde G$ corresponds to the images of $c$ under
  the induced maps on $H^2(-;A)$:
  \[
  \begin{diagram}
    \node{H^2(\pi;A)} \node{H^2(G;A)}\arrow{w,t}{\alpha^*}
    \\
    \node{H^2(\Gamma;A)} \arrow{n,l}{f'^*} \arrow{ne,b}{g'^*}
  \end{diagram}
  \]
  Since $\tilde\pi\to \pi$ splits, $f'^*(c)=0$.  By
  Lemma~\ref{lemma:chain-contraction-argument}, $H^i(G,\pi;A)=0$ for
  $i\le 2$ since $\alpha$ is $R$-homology 2-connected.
  Therefore
  $\alpha^*$ on $H^2(-;A)$ is injective.  It follows that $g'^*(c)=0$,
  that is, $\tilde G \to G$ splits and so $\tilde G$ is a semidirect
  product of $A$ and~$G$.
 
  Now the splittings $G\to \tilde G$ are classified by $H^1(G;A)$.
  Again by Lemma~\ref{lemma:chain-contraction-argument}, $\alpha$
  gives rise to an isomorphism $H^1(G;A)\to H^1(\pi;A)$.  Therefore,
  splittings of $\tilde\pi\to \pi$ and those of $\tilde G\to G$ are in
  1-1 correspondence.  It follows that there is a unique splitting
  $G\to\tilde G$ which is compatible with $s$, and the induced
  homomorphism $g\colon G\to \tilde\Gamma$ is exactly the desired one.
  This shows that $\tilde\Gamma$ is $R$-local.
\end{proof}

For the application of
Theorem~\ref{theorem:extension-of-local-group-by-local-module} that we
needed in Section~\ref{section:computation-for-torus-bundle-group},
the following two examples of Cohn local modules are useful.

\begin{lemma}
  \label{lemma:trivial-G-module-is-RG-local}
  If $A$ is an $RG$-module with trivial $G$-action, then $A$ is a Cohn
  local module.
\end{lemma}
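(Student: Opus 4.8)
The plan is to reduce the defining condition for $A$ to a statement about $R$-modules, exploiting that the $G$-action on $A$ is trivial. The key observation is that the augmentation homomorphism $\epsilon\colon RG\to R$ exhibits the category of $R$-modules as the subcategory of $RG$-modules with trivial $G$-action, and that under this identification extension of scalars $R\otimesover{RG}(-)$ is left adjoint to the inclusion. Thus for every $RG$-module $F$ there is a natural isomorphism
\[
\Hom_{RG}(F,A)\;\xrightarrow{\ \cong\ }\;\Hom_{R}\!\big(R\otimesover{RG}F,\,A\big),
\]
carrying $h$ to the map $r\otimes x\mapsto r\,h(x)$, with inverse sending $\bar h$ to $x\mapsto\bar h(1\otimes x)$. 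Triviality of the action is exactly what makes $r\otimes x\mapsto r\,h(x)$ well-defined.

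With this in hand, I would argue as follows. Given a diagram as in Definition~\ref{definition:local-module}, with $F_0,F_1$ finitely generated free $RG$-modules of equal rank, $\alpha\colon F_0\to F_1$ such that $1_R\otimesover{RG}\alpha$ is an isomorphism, and $f\colon F_0\to A$, apply the displayed isomorphism to translate the diagram into one of $R$-modules: $f$ becomes an $R$-map $\bar f\colon R\otimesover{RG}F_0\to A$, and by naturality a map $g\colon F_1\to A$ satisfies $g\circ\alpha=f$ if and only if the corresponding $\bar g\colon R\otimesover{RG}F_1\to A$ satisfies $\bar g\circ(1_R\otimesover{RG}\alpha)=\bar f$. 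Since $1_R\otimesover{RG}\alpha$ is invertible, there is exactly one such $\bar g$, namely $\bar g=\bar f\circ(1_R\otimesover{RG}\alpha)^{-1}$, hence exactly one such $g$. This is precisely the assertion that $A$ is Cohn local.

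I do not expect any real obstacle: the statement is essentially a one-line consequence of the adjunction once it is set up. The only point deserving (minor) attention is checking that the displayed isomorphism is natural in $F$, so that precomposition with $\alpha$ on the left-hand side corresponds to precomposition with $1_R\otimesover{RG}\alpha$ on the right-hand side; this is a routine diagram chase. Note also that the hypotheses that $F_0,F_1$ be free of the same finite rank are not used beyond making sense of $1_R\otimesover{RG}\alpha$ being an isomorphism.
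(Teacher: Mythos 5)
Your proof is correct and takes essentially the same approach as the paper: the paper's proof is exactly your adjunction argument written out concretely, defining $g$ as the composite $F_1\to R\otimes_{RG}F_1\xrightarrow{(1_R\otimes\alpha)^{-1}} R\otimes_{RG}F_0\xrightarrow{1_R\otimes f} R\otimes_{RG}A\cong A$. Your version makes the naturality/adjunction bookkeeping explicit, which is the part the paper leaves implicit when asserting uniqueness.
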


\begin{proof}
  From the assumption it follows that $A\cong R\otimes_{RG} A$ as
  $RG$-modules.  If $f$ and $\alpha$ are as in
  Definition~\ref{definition:local-module}, then the composition
  \[
  g\colon F_1\to R\otimes_{RG} F_1 \xrightarrow{(1_R\otimes \alpha)^{-1}}
  R\otimes_{RG}F_0 \xrightarrow{1_R\otimes f} R\otimes_{RG}A \cong
  A
  \]
  is the required unique homomorphism.
\end{proof}

Recall that $\Z^t_{(2)}$ is the abelian group $\Z_{(2)}$ with the
$\Z$-action given by negation.

\begin{lemma}
  \label{lemma:Z^t_(2)-is-local}
  Suppose $R$ is a subring of~$\Z_{(2)}$.  Then $\Z^t_{(2)}$ is a
  local $R[\Z]$-module.
\end{lemma}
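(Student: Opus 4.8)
The plan is to unwind Definition~\ref{definition:local-module} for the group ring $R[\Z]=R[t^{\pm1}]$ and reduce it to an elementary $2$-local arithmetic fact. Write $A=\Z^t_{(2)}$. Since $t$ acts on $A$ by negation, the $R[t^{\pm1}]$-module structure of $A$ factors through the ring homomorphism $\varepsilon_{-1}\colon R[t^{\pm1}]\to\Z_{(2)}$, $t\mapsto -1$. Hence for a finitely generated free module $F=R[t^{\pm1}]^n$ one has a natural identification $\Hom_{R[t^{\pm1}]}(F,A)=A^n=\Z_{(2)}^n$, and a homomorphism $\alpha\colon F_0\to F_1$ between free modules of the same rank $n$, given by a matrix $M(t)\in M_n(R[t^{\pm1}])$, induces on $\Hom(-,A)$ the $\Z_{(2)}$-linear endomorphism of $\Z_{(2)}^n$ given by $M(-1)$ (or its transpose, which is irrelevant for what follows). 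So checking the defining condition for $A$ amounts to showing that, whenever $1_R\otimes\alpha$ is an isomorphism, the map $M(-1)\colon\Z_{(2)}^n\to\Z_{(2)}^n$ is a bijection; being an endomorphism of a free module over the commutative ring $\Z_{(2)}$, that is equivalent to $\det M(-1)\in\Z_{(2)}^\times$. This step runs parallel to the proof of Lemma~\ref{lemma:trivial-G-module-is-RG-local}, with evaluation at $t=-1$ playing the role that $R\otimes_{RG}-$ (i.e.\ evaluation at $t=1$) plays there.

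Next I would translate the hypothesis. Since $R\otimes_{R[t^{\pm1}]}R[t^{\pm1}]^n=R^n$ via the augmentation $t\mapsto 1$, the assumption that $1_R\otimes\alpha$ is an isomorphism says exactly $M(1)\in GL_n(R)$, equivalently $d(1)\in R^\times$ where $d(t):=\det M(t)\in R[t^{\pm1}]$. Thus the whole lemma reduces to the implication
\[
d(t)\in R[t^{\pm1}],\quad d(1)\in R^\times\quad\Longrightarrow\quad d(-1)\in\Z_{(2)}^\times.
\]

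The proof of this implication is the only real content, and it is short. Since $d(t)-d(1)$ vanishes at $t=1$ and every monomial difference $t^i-1$ is divisible by $t-1$ in $R[t^{\pm1}]$, we may write $d(t)=d(1)+(t-1)\,q(t)$ with $q(t)\in R[t^{\pm1}]$. Evaluating at $t=-1$ gives $d(-1)=d(1)-2\,q(-1)$ with $q(-1)\in R\subseteq\Z_{(2)}$, hence
\[
d(-1)=d(1)\bigl(1-2\,d(1)^{-1}q(-1)\bigr).
\]
Writing $d(1)^{-1}q(-1)=a/b\in\Z_{(2)}$ with $a,b\in\Z$ and $b$ odd, the second factor equals $(b-2a)/b$, a ratio of two odd integers, so it lies in $\Z_{(2)}^\times$; and $d(1)\in R^\times\subseteq\Z_{(2)}^\times$ since $R\subseteq\Z_{(2)}$. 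Therefore $d(-1)\in\Z_{(2)}^\times$, $M(-1)$ is invertible over $\Z_{(2)}$, the required unique $g\colon F_1\to A$ exists, and $\Z^t_{(2)}$ is a local $R[\Z]$-module.

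I expect no serious obstacle: the two points to get right are the bookkeeping identifying the induced map on $\Hom$-sets with $M(-1)$, and the observation that a quotient of two odd integers is a unit of $\Z_{(2)}$ — both routine. One could instead phrase the argument via the Cohn localization $\Sigma^{-1}R[t^{\pm1}]$, with $\Sigma$ the set of matrices invertible under augmentation, and check that $\varepsilon_{-1}$ is $\Sigma$-inverting; but the direct verification above is shorter.
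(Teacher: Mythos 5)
Your argument is correct and is essentially the paper's own: both reduce the question to showing that $\det M(-1)\in\Z_{(2)}^\times$ whenever $\det M(1)\in R^\times$, using the observation that $\det M(1)-\det M(-1)=2r$ for some $r\in R$ and hence both are quotients of odd integers. The only (cosmetic) difference is that you phrase the verification via $\Hom_{R[\Z]}(-,A)$ and invertibility of $M(-1)^T$, whereas the paper constructs the splitting $g$ explicitly through $\Z^t_{(2)}\otimes_{R[\Z]}(-)$ and the isomorphism $\bar\alpha$; your factorization $d(-1)=d(1)\bigl(1-2d(1)^{-1}q(-1)\bigr)$ just spells out the odd-numerator step the paper states more tersely.
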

\begin{proof}
  Suppose $f$ and $\alpha$ are as in 
  Definition~\ref{definition:local-module}.  Let $A(t)$ be a square
  matrix over $R[t^{\pm1}]$ representing $\alpha$, where $t$ is a
  generator of the group $\Z$.  Then $A(1)$ is invertible in $R$ since
  $\alpha$ induces an isomorphism $R\otimes_{R[\Z]}F_0\to
  R\otimes_{R[\Z]}F_1$.  Therefore $\det A(1)\in R$ is a unit, that
  is, the numerator of $\det A(1)$ is odd.  Since $\det A(1)-\det
  A(-1)$ is of the form $2\cdot r$ for some $r\in R$, it follows that
  $\det A(-1)$ has odd numerator too, that is, $\det A(-1)$ is a unit
  in~$\Z_{(2)}$.  Since $A(-1)$ represents
  $\bar\alpha:=1_{\Z^t_{(2)}}\otimes \alpha\colon
  \Z^t_{(2)}\otimes_{R[\Z]} F_0 \to \Z^t_{(2)}\otimes_{R[\Z]} F_1$, it
  follows that $\bar\alpha$ is an isomorphism.

  It is easily seen that the homomorphism $\Z_{(2)}^t \to \Z_{(2)}^t
  \otimes_{R[\Z]} \Z_{(2)}^t$ defined by $a \to 1\otimes a$ is an
  isomorphism, with inverse $a\otimes b \to ab$.  Now, the composition
  \[
  g\colon F_1 \to \Z_{(2)}^t \otimes_{R[\Z]} F_1
  \xrightarrow{\bar\alpha^{-1}} \Z_{(2)}^t
  \otimes_{R[\Z]} F_0 \xrightarrow{1_{\Z_{(2)}^t}\otimes f} \Z_{(2)}^t
  \otimes_{R[\Z]} \Z_{(2)}^t \cong
 \Z_{(2)}^t
  \]
  is a unique homomorphism such that $f=g\alpha$.
\end{proof}

\begin{proof}
  [Proof of Proposition~\ref{proposition:hat-Gamma-is-local}] 

  Recall that $\widehat\Gamma$ is given as the extension
  \[
  1 \to \Z_{(2)}/\Z \to \widehat\Gamma \to (\Z_{(2)}^t)^2 \rtimes \Z
  \to 1.
  \]
  By Lemma~\ref{lemma:Z^t_(2)-is-local} and
  Theorem~\ref{theorem:extension-of-local-group-by-local-module},
  $G:=(\Z_{(2)}^t)^2 \rtimes \Z$ is $R$-local.  Since the above
  extension is central, $\Z_{(2)}/\Z$ is an $RG$-local module by
  Lemma~\ref{lemma:trivial-G-module-is-RG-local}.  By
  Theorem~\ref{theorem:extension-of-local-group-by-local-module}, it
  follows that $\widehat\Gamma$ is $R$-local.
\end{proof}

\bibliographystyle{amsalpha} \renewcommand{\MR}[1]{}

\bibliography{research}

\end{document}